\numberwithin{equation}{section}
  \newtheorem{theorem}{Theorem}[section]
 \newtheorem{corollary}[theorem]{Corollary}
 \newtheorem{lemma}[theorem]{Lemma}
 \newtheorem{proposition}[theorem]{Proposition}
\theoremstyle{definition}
 \newtheorem{definition}[theorem]{Definition}
 \newtheorem{remark}[theorem]{Remark}
 \newcommand{\tr}{}
 \definecolor{green}{rgb}{0,0.55,0}
\definecolor{lightgreen}{rgb}{0.5,0.85,-1}
\DeclareMathOperator{\lip}{lip\kern-0.8pt}
\DeclareMathOperator{\Lip}{Lip\kern-0.8pt}
\definecolor{skyblue}{rgb}{0,0.4,0.6}
\title{Big and little Lipschitz one sets}
\author{Zolt\'an Buczolich\thanks{\scriptsize
This author was supported by the Hungarian National Research, Development and Innovation Office--NKFIH, Grant  124003.
},
Department of Analysis, ELTE E\"otv\"os Lor\'and\\
University, P\'azm\'any P\'eter S\'et\'any 1/c, 1117 Budapest, Hungary\\
email: zoltan.buczolich@ttk.elte.hu\\
{\tt http://buczo.web.elte.hu/}\\
ORCID Id: 0000-0001-5481-8797
   \smallskip\\
  Bruce Hanson, Department of Mathematics,\\ Statistics and Computer Science,\\ St.\ Olaf College,
Northfield, Minnesota 55057, USA\\
{email:} \texttt{hansonb@stolaf.edu}
  \smallskip\\
 Bal\'azs Maga\thanks{\scriptsize This author was supported by the \'UNKP-18-2 New National Excellence of the Hungarian Ministry of Human Capacities, and by the Hungarian National Research, Development and Innovation Office–NKFIH, Grant 124749.},
Department of Analysis, ELTE E\"otv\"os Lor\'and\\
University, P\'azm\'any P\'eter S\'et\'any 1/c, 1117 Budapest, Hungary\\
 email: magab@cs.elte.hu \\{\tt www.cs.elte.hu/\hbox{$\sim$}magab}
   \smallskip\\
and
   \smallskip\\
 G\'asp\'ar V\'ertesy\thanks{\scriptsize This author was supported supported by the \'UNKP-18-3 New National Excellence Program of the Ministry of Human Capacities, and  by the Hungarian National Research, Development and Innovation Office–NKFIH, Grant 124749.
 \newline\indent {\it Mathematics Subject
Classification:} Primary : 26A16, Secondary : 28A05.
\newline\indent {\it Keywords:} Lipschitz functions, uniform density properties.},
 Department of Analysis, ELTE E\"otv\"os Lor\'and\\
University, P\'azm\'any P\'eter S\'et\'any 1/c, 1117 Budapest, Hungary\\
email: vertesy.gaspar@gmail.com\
}
\date{\today}
\begin{document}
\maketitle

\medskip

\medskip



\newpage
 \begin{abstract}{Given a continuous function 
$f: {\ensuremath {\mathbb R}}\to {\ensuremath {\mathbb R}}$ we denote the so-called ``big Lip'' and ``little lip'' functions by $ \Lip f$ and $ \lip f$ respectively}. In this paper we are interested in the following question. Given a set $E {\subset}  {\ensuremath {\mathbb R}}$ is it possible to find a continuous function $f$ such that
$ \lip f=\mathbf{1}_E$ or $ \Lip f=\mathbf{1}_E$?

For monotone continuous functions we provide the rather straightforward answer.

 For arbitrary continuous functions the answer is much more difficult   to find.   We introduce the concept of uniform density type (UDT) and
show that if $E$ is $G_\delta$ and UDT then there exists a continuous function $f$ satisfying $ \Lip f =\mathbf{1}_E$, that is, $E$ is a $ \Lip 1$ set.

{In the other direction we show that every $\Lip 1$ set is $G_\delta$ and weakly dense.  We also show that the converse of this statement is not true, namely} that there exist weakly dense $G_{{\delta}}$ sets which are not 
$ \Lip 1$.

 We say that a set $E\subset \mathbb{R}$ is $\lip 1$ if there is a continuous function $f$ such that $\lip f=\mathbf{1}_E$.  We introduce the concept of strongly one-sided density and show that every $\lip 1$ set is a strongly one-sided dense $F_\sigma$ set.  
   \end{abstract}


\section{Introduction}\label{*secintro}
Throughout this note we assume that $f: {\mathbb R} \to  {\mathbb R}$ is continuous.
Then the so-called ``big Lip'' and ``little lip'' functions are defined as follows:

 \begin{equation}
  \Lip
  f(x)= \limsup_{r\to 0^+}M_f(x,r),\qquad\label{Lipdef}
  \lip
  f(x)= \liminf_{r\rightarrow 0^+}M_f(x,r),
 \end{equation}
where
$$M_f(x,r)=\frac{\sup\{|f(x)-f(y)| \colon |x-y| \le r\}}r.$$

 As far as we know 
the definition  of $\lip f$  first appeared in  \cite{[Cheeger]} and later reappeared in \cite{[Keith]}. 

In order to connect these functions to more customary ones, after denoting the Dini derivatives by $D^{+}f(x), D^{-}f(x), D_{+}f(x), D_{-}f(x)$
(for the definitions see for example \cite{BBT} p. 317), one can easily check that
 $$\Lip f(x) = \max\{D^{+}f(x), D^{-}f(x), -D_+f(x),-D_{-}f(x) \}.$$
Note also that if $f$ is differentiable at $x$, then $\lip f(x)=\Lip f(x)=|f'(x)|$.  Moreover,  $\Lip f(x) = 0$ if and only if $f'(x)=0$.
 The connection of $\lip f(x)$ to the Dini derivatives is quite weak as the following
example shows. Suppose that $g({-1}/{2^{n^2}})=g(1/2^{n^2})=(-1)^{n}/2^{n^2}$, $n=1,2,...$
and $g(x)=0$ otherwise. The function $g$ is not continuous, but 
one can easily see that by a slight modification of $g$ one can obtain a continuous function $f$ for which
$D^{+}f(0)=D^{-}f(0)=1$, $D_{+}f(0)=D_{-}f(0)=-1$
while $\lip f(0)=0.$ 

We also define
$$L_f=\{x\in  {\mathbb R} \,:\,  \Lip f(x)<\infty \} \text{    and    }l_f=\{x\in  {\mathbb R} \,:\,  \lip f(x) < \infty \}.$$
The behaviour of the two functions, $ \Lip f$ and $ \lip f$, is intimately related to the differentiability of $f$.  For example, the Rademacher-Stepanov Theorem  \cite{MaZa} tells us that
if $ {\mathbb R}\backslash L_f$ has measure zero, then $f$ is differentiable almost everywhere on $ {\mathbb R}$.  On the other hand, in (\cite{BaloghCsornyei}, 2006) Balogh and Cs\"{o}rnyei   construct a continuous function $f: {\mathbb R}\to {\mathbb R}$ such that $ \lip f=0$ almost everywhere, but $f$ is nowhere differentiable.  However, in the same paper, they also show that if $ {\mathbb R}\backslash l_f$ is countable and $ \lip f$ is locally integrable, then $f$ is again differentiable almost everywhere on $ {\mathbb R}$.

More recently, progress has been made on characterizing the sets $L_f$ and $l_f$ for continuous functions (\cite{BHRZ}, 2018) and characterizing the sets of non-differentiability for continuous functions with either $L_f= {\mathbb R}$ or $l_f= {\mathbb R}$ (\cite{Hanson2}, 2016).  There are still a number of open problems concerning the relationship between $L_f$ ($l_f$) and the differentiability properties of $f$.

We also mention the very recent result (\cite{MaZi}, 2019) about little Lipschitz maps of analytic metric spaces with sufficiently high packing dimension onto cubes in $ {\mathbb R}^n$.

It is an interesting problem to characterize the functions $ \Lip f$ and $ \lip f$ for continuous functions $f$.  {This is in the spirit of the well-known problem of characterizing the functions $f$ which are derivatives.    (See \cite{Z}, \cite{PT}, \cite{W}, \cite{BL}.)} In this note, we take a first step in this direction by investigating when it is possible for $ \Lip f$ (or $ \lip f$) to be a characteristic function.  Given a set $E \subset  {\mathbb R}$ we say that $E$ is  $ \Lip 1$ ($ \lip 1$) if there is a continuous function $f$ defined on $ {\mathbb R}$ such that $ \Lip f =\mathbf{1}_E$, ($ \lip f=\mathbf{1}_E$).  So we are interested in determining which sets $E$ are $ \Lip 1$ or $ \lip 1$.  {(See \cite{M} for a related problem.)}

It turns out that it is straightforward to decide this in the special case where $f$ is monotone.   We say that $E$ is {\em monotone} $ \Lip 1$ ($ \lip 1$) if there is a continuous, monotone function $f$ such that $ \Lip f=\mathbf{1}_E$ ($ \lip f=\mathbf{1}_E$).  In Theorems \ref{Lip1monotonethm} and \ref{lip1monotonethm} we show that monotone $ \Lip 1$ and $ \lip 1$ sets can be characterized using simple density conditions.
The details for this are laid out in Section \ref{*seceasy}.

In Section \ref{*secLip} we see that $ \Lip 1$ sets are weakly dense $G_{{\delta}}$
sets (\tr{Definition \ref{weak dense}, } Theorem \ref{Lip1weaklydense}) and $\lip 1$ sets are strongly one-sided dense $F_{\sigma}$ sets (\tr{Definition \ref{onesideddense},} Theorem \ref{*lipnec}). In Theorem \ref{Lip1thm} we show that a set $E$ is $\Lip 1$ if and only if $\mathbb{R}$ can be divided into three sets such that they give a ternary decomposition with respect to $E$ in the sense of Definition \ref{ternary}. 
In Theorem \ref{lip 1 thm} it is proved that countable disjoint unions of closed and strongly one-sided dense sets are $ \lip 1$.

In Section \ref{*secsubsuper} we consider the more difficult problem of characterizing general $ \Lip 1$ sets. 
Given a measurable set, we introduce a two-parameter family of sets describing its levels of density and use this to define uniform density type (UDT) sets.
 \begin{definition}\label{*defudt}
Suppose that $E\subseteq\mathbb{R}$ is measurable and $\gamma,\delta>0$.
Let
 \begin{displaymath} E^{\gamma,\delta}=\left\{x\in\mathbb{R}:
 \forall r\in (0,\delta],\texttt{  }
 \max\left\{\frac{|(x-r,x)\cap E|}{r},\frac{|(x,x+r)\cap E|}{r}\right\}\geq\gamma \right\},
\end{displaymath}
where $|E|$ denotes the Lebesgue measure of the set $E$.

We say that $E$ has uniform density type (UDT) if there exist sequences $\gamma_n\nearrow 1$ and $\delta_n\searrow 0$ such that $E\subseteq \bigcap_{k=1}^{\infty} \bigcup_{n=k}^{\infty}E^{\gamma_n,\delta_n}$.
\end{definition}

Our main result from Section \ref{*secsubsuper}, Theorem \ref{*thUDTLip1}, states that $G_\delta$ sets which are UDT are $ \Lip 1$. {As we show in \cite{[BHMVlipap]}, the converse of this statement does not hold.  There exist $\Lip 1$ sets which are not UDT.}

Finally, in Section \ref{*secnotlip14} we show that the UDT condition in Theorem \ref{*thUDTLip1} cannot be replaced with one of the weaker density conditions from
Section \ref{*seceasy}.

{Summarizing the main results of this paper, we show that 
$$G_\delta+\mbox{UDT}\Rightarrow \Lip 1 \Rightarrow G_\delta+\mbox{weakly dense},$$
and that the second implication cannot be reversed.}

\section{Preliminary definitions and  results}\label{*secprel}

The union of disjoint sets $A$ and $B$ is denoted by $A\sqcup  B$.  For any $S,T\subset  {\mathbb R}$ and $x \in  {\mathbb R}$ we define $d(S,T)$ to be the lower distance from $S$ to $T$,
that is $\inf\{|x-y|:x\in S,\  y\in T \}$. Let
$d(x,S)=d(\{x\},S)$.{ (We recall that we defined $|S|$ to be the Lebesgue measure of $S$.) }

In the space of continuous functions defined on an interval $I$ we use the supremum norm $||f||=\sup\{|f(x)|: x\in I  \}$ and the metric topology generated by this norm.

 \begin{definition}\label{weak dense}
Given a sequence of non-degenerate closed intervals $\{I_n\}$, we write {\em $I_n\to x$} if $x \in I_n$ for all
$n \in  \mathbb{N}$ and $|I_n|\to 0$.

The measurable set
$E$ is {\it weakly dense} at $x$ if there exists $I_n \to x$ such that $\frac{|E \cap I_n|}{|I_n|}\to 1$.
The set $E$ is {\it weakly dense} if $E$ is weakly dense at $x$ for each $x \in E$.

The set $E$ is {\it strongly dense} at $x$ if  for every sequence $\{I_n\}$ such that $I_n \to x $ we have $ \frac{|E \cap I_n|}{|I_n|}\to 1$.
We say that  $E$ is {\it strongly dense} if $E$ is strongly dense at $x$ for each $x \in E$.

\end{definition}

Note: $E$ being strongly dense at $x$, just means that $x$ is a point of density of $E$.
\tr{(See Remark \ref{densityimplications}.)}

In this paper a.e.~always means  almost everywhere with respect to Lebesgue measure.

\tr{ \begin{lemma}\label{novekedes}
If $E\subset {\ensuremath {\mathbb R}}$ and $f\colon {\ensuremath {\mathbb R}}\rightarrow {\ensuremath {\mathbb R}}$ such that $ \lip f\le \mathbf{1}_E$ then $|f(a)-f(b)|\le |[a,b]\cap E|$ for every $a,b\in {\ensuremath {\mathbb R}}$ (where $a<b$) so $f$ is Lipschitz and hence absolutely continuous. 
\end{lemma}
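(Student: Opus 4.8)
The plan is to reduce everything to the single inequality $|f(a)-f(b)|\le|[a,b]\cap E|$ for $a<b$: once this is known, taking $a,b$ arbitrary gives $|f(a)-f(b)|\le b-a$, so $f$ is $1$-Lipschitz and hence absolutely continuous. Since $M_{-f}=M_f$ we have $\lip(-f)=\lip f\le\mathbf{1}_E$, so it suffices to prove the one-sided estimate $f(b)-f(a)\le|[a,b]\cap E|$ and then apply it to $-f$ for the reverse direction. (Note also that $\lip f\le\mathbf{1}_E$ forces $\lip f<\infty$ everywhere, which by monotonicity of $r\mapsto\mathrm{osc}(f;[x-r,x+r])$ already implies that $f$ is continuous; in any case continuity of $f$ is the standing assumption of the paper.)

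The core of the argument is a rising-sun estimate, run against an open thickening of $E$ rather than against $E$ itself. Fix $\varepsilon>0$ and, by outer regularity, choose an open set $G\supseteq E\cap[a,b]$ with $|G|<|[a,b]\cap E|+\varepsilon$. Put $\nu(x)=|[a,x]\cap G|$, so that $\nu$ is nondecreasing, $1$-Lipschitz, $\nu(a)=0$, and $\nu(b)\le|G|<|[a,b]\cap E|+\varepsilon$. Consider the continuous function
$$\psi(x)=f(x)-f(a)-\nu(x)-\varepsilon(x-a),\qquad x\in[a,b],$$
with $\psi(a)=0$. The claim is that $\psi(b)\le 0$; granting this and letting $\varepsilon\to 0$ yields $f(b)-f(a)\le|[a,b]\cap E|$.

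To prove $\psi(b)\le0$, suppose not, and let $c=\sup\{x\in[a,b]:\psi(x)\le0\}$; then $c<b$, $\psi(c)=0$ by continuity, and $\psi>0$ on $(c,b]$. The key step is to derive a contradiction by splitting on whether $c\in G$. If $c\notin G$, then $c\notin E$ (since $E\cap[a,b]\subseteq G$), so $\lip f(c)=0$; picking a small $r>0$ with $c+r\le b$ and $M_f(c,r)$ as small as we like, and using $\nu(c+r)\ge\nu(c)$, we get $\psi(c+r)<0$, contradicting $\psi>0$ on $(c,b]$. If $c\in G$, then a whole neighbourhood of $c$ lies in $G$, so $\nu(c+r)-\nu(c)=r$ for all small $r$; since $\psi(c+r)>0=\psi(c)$, this forces $f(c+r)-f(c)>(1+\varepsilon)r$, hence $M_f(c,r)>1+\varepsilon$ for all sufficiently small $r$ with $c+r\le b$, so $\lip f(c)\ge1+\varepsilon$, contradicting $\lip f(c)\le\mathbf{1}_E(c)\le1$. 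Either way we reach a contradiction, so $\psi(b)\le0$.

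I expect the genuinely delicate point to be conceptual rather than computational, and it is precisely what rules out the naive choice $\psi(x)=f(x)-f(a)-|[a,x]\cap E|-\varepsilon(x-a)$: since $\lip f(c)$ controls $M_f(c,r)$ only along some sequence $r\to0^+$, not for all small $r$, at a rising-sun point $c\in E$ one cannot exclude $f$ growing locally faster than the density of $E$ near $c$. Passing to the open superset $G$ of almost the same measure repairs this, because at an interior point of $G$ the comparison function $\nu$ increases at genuine unit rate, which is incompatible with $\lip f\le1$, whereas every point outside $G$ automatically misses $E$ and so carries $\lip f=0$. The remaining steps---sending $\varepsilon\to0$ and symmetrising via $-f$---are routine.
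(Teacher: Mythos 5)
Your argument is correct, and it is genuinely different from the paper's. The paper proceeds in two steps: first it proves that $f$ is Lipschitz by a covering argument (for each $x$ it picks $r_x<\varepsilon$ with $M_f(x,r_x)<1+\varepsilon$, extracts a minimal finite subcover of $[a,b]$ by the intervals $(x-r_x,x+r_x)$, uses that each point lies in at most two of them, and sums), and then it invokes the standard machinery that a Lipschitz function is absolutely continuous and differentiable a.e., so $f(b)-f(a)=\int_a^b f'$, together with the observation that $|f'(x)|=\lip f(x)\le\mathbf{1}_E(x)$ at every point of differentiability. Your proof instead obtains the sharp inequality $|f(a)-f(b)|\le|[a,b]\cap E|$ directly, by a rising-sun argument run against the comparison function $\nu(x)=|[a,x]\cap G|$ for an open $G\supset E\cap[a,b]$ of nearly the same measure: at the critical point $c$, either $c\notin G$ and then $\lip f(c)=0$ pushes $\psi$ back below zero, or $c\in G$ and then the unit slope of $\nu$ near $c$ forces $M_f(c,r)>1+\varepsilon$ for all small $r$, violating $\lip f(c)\le1$. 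Your use of outer regularity to replace $E$ by an open superset is exactly the right remedy for the fact that $\lip f$ only controls a sequence of radii, a subtlety you correctly identify as the crux. The trade-off: the paper's route is shorter once one cites differentiation theory for Lipschitz functions, while yours is self-contained and delivers the sharp bound in one pass without passing through a.e.\ differentiability or the fundamental theorem of calculus for absolutely continuous functions.
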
}

\begin{proof}
Let $ {\varepsilon}>0$. For every $x\in {\ensuremath {\mathbb R}}$ we fix $r_x\in(0, {\varepsilon})$ such that $M_f(x,r_x)<1+ {\varepsilon}$. We select a finite set $H\subset {\ensuremath {\mathbb R}}$ for which $\{(x-r_x,x+r_x) : x\in H\}$ is a minimal cover of $[a,b]$. Then every $y\in {\ensuremath {\mathbb R}}$ is contained by at most two of these open intervals. If $x\in {\ensuremath {\mathbb R}}$, $r>0$ and $y\in (x-r,x+r)$, we have $|f(x)-f(y)|\le rM_f(x,r)$. Thus
$$
|f(a)-f(b)| \le \sum_{x\in H} 2 r_x M_f(x,r_x) \le \sum_{x\in H} 2 r_x (1+ {\varepsilon}) \le 2(b+ {\varepsilon}-(a- {\varepsilon}))(1+ {\varepsilon}).
$$
Hence $f$ is Lipschitz as $a$, $b$ and $ {\varepsilon}$ were chosen arbitrarily.

Since $f$ is Lipschitz it is absolutely continuous. Therefore
 $f'$ exists almost everywhere and $f(b)-f(a)=\int_a^b  f'(t)\, dt$.
 Since $|f'| \le \mathbf{1}_E $ a.e. we obtain that
$|f(b)-f(a)|\leq \int_a^b  \mathbf{1}_E(t)\, dt=|[a,b]\cap E|$. 
 \end{proof}

\section{Necessary and/or sufficient conditions for monotone $ \Lip 1$ and $ \lip 1$ sets}\label{*seceasy}

For monotone $ \Lip 1$ and $ \lip 1$ sets it is rather easy to obtain necessary and sufficient conditions.

 \begin{theorem}\label{Lip1monotonethm}
The set $E$ is monotone $ \Lip 1$ if and only if $E$ is weakly dense and $E^c$ is strongly dense.
\end{theorem}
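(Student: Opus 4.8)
The plan is to prove the two directions separately. For the necessity direction, suppose $f$ is continuous and monotone with $\Lip f = \mathbf{1}_E$. Since $\Lip f \le 1$ everywhere, Lemma \ref{novekedes} (applied, say, after noting $\lip f \le \Lip f \le \mathbf{1}_E$) tells us $f$ is Lipschitz, hence absolutely continuous, with $|f'| \le \mathbf{1}_E$ a.e.; monotonicity makes $f'$ of constant sign, so WLOG $f$ is nondecreasing and $0 \le f' \le \mathbf{1}_E$ a.e. First I would show $E$ is weakly dense: fix $x \in E$, so $\Lip f(x) = 1$, which means there are arbitrarily small $r$ with $M_f(x,r)$ close to $1$; using monotonicity, $M_f(x,r) = \max\{f(x+r)-f(x), f(x)-f(x-r)\}/r$, so one of the one-sided increments is nearly $r$. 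On that side, say $[x, x+r]$, we have $f(x+r)-f(x) = \int_x^{x+r} f' \le |[x,x+r]\cap E|$, forcing $|[x,x+r]\cap E|/r \to 1$ along a subsequence; taking $I_n = [x, x+r_n]$ (or the left-hand analogue) gives weak density at $x$. Next, $E^c$ strongly dense: fix $x \in E^c$, so $\Lip f(x) = 0$, i.e. $M_f(x,r) \to 0$ as $r \to 0^+$; then for any sequence $I_n \to x$, writing $I_n = [a_n, b_n]$, absolute continuity gives $|[a_n,b_n] \cap E| \ge \int_{a_n}^{b_n} f' = f(b_n)-f(a_n)$ — wait, this inequality points the wrong way, so instead I would argue $|[a_n,b_n]\setminus E| \ge$ something. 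The cleaner route: $\Lip f = \mathbf{1}_E$ combined with the Lebesgue density theorem. Since $f' = \mathbf{1}_E$ a.e. would be too strong; rather, on $E^c$ we know $\Lip f = 0$ so $f' = 0$ there, hence $f'\le \mathbf 1_E$ and $f' = 0$ a.e. on $E^c$; but we also need $f' = 1$ a.e. on $E$, which is NOT automatic. So the necessity of "$E^c$ strongly dense" must come directly from $\Lip f(x) = 0 \Rightarrow f'(x) = 0$ forcing, via $|f(b_n) - f(a_n)| \ge \int_{I_n \cap E} f'$... I think the right statement is: if $x$ is NOT a density point of $E^c$, then $x$ is a point where $\limsup |I_n \cap E|/|I_n| > 0$ for some $I_n \to x$, and on such intervals a positive-measure chunk lies in $E$ where, after a further argument, $f$ must oscillate at rate bounded below — contradicting $\Lip f(x) = 0$. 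I would flesh this out by showing $\Lip f(x) = 0$ forces $x$ to be a density point of $\{f' = 0\} \supseteq E^c$ is false in general, so the real content is that one can choose the monotone $f$ wisely in the converse and the necessity uses only $\Lip f(x)=0 \Rightarrow f$ has zero "upper derivate" at $x$ which indeed does force density of $E^c$... This is the delicate point.

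For the sufficiency direction, assume $E$ is weakly dense and $E^c$ is strongly dense. I would construct the monotone function explicitly as $f(x) = |[0,x] \cap E|$ for $x \ge 0$ and $f(x) = -|[x,0]\cap E|$ for $x < 0$; equivalently $f(x) = \int_0^x \mathbf{1}_E(t)\,dt$. This $f$ is nondecreasing, $1$-Lipschitz, hence continuous, and $f'(x) = \mathbf{1}_E(x)$ at every Lebesgue point of $\mathbf{1}_E$. The task is then to verify $\Lip f = \mathbf{1}_E$ pointwise. For $x \in E^c$: since $E^c$ is strongly dense at $x$, for every $I_n \to x$ we get $|I_n \cap E|/|I_n| \to 0$, and since $M_f(x,r) \le |[x-r,x+r]\cap E|/r \cdot (\text{const})$ — more precisely $M_f(x,r) = \sup_{|x-y|\le r}|f(x)-f(y)|/r$ and $|f(x)-f(y)| = |[x,y]\cap E| \le |[x-r,x+r]\cap E|$ — we conclude $M_f(x,r)\to 0$, so $\Lip f(x) = 0$. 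For $x \in E$: weak density gives $I_n \to x$ with $|I_n \cap E|/|I_n| \to 1$; I'd need to convert this into a lower bound on $M_f(x, r_n)$ for suitable radii, i.e. show that $\sup\{|f(x) - f(y)| : |x-y| \le r_n\}/r_n \to 1$ where $r_n = |I_n|$ or half of it. Since $I_n \ni x$, write $I_n = [x - s_n, x + t_n]$ with $s_n + t_n = |I_n|$; then $|I_n \cap E| = ([0,t_n] \text{ portion}) + ([0,s_n]\text{ portion})$, and $f(x+t_n) - f(x-s_n) = |I_n \cap E| \to |I_n|$, while the larger of the two one-sided increments is at least half of this; with $r_n = \max(s_n, t_n) \le |I_n|$ we get $M_f(x, r_n) \ge \tfrac{1}{2}|I_n \cap E|/|I_n| \to \tfrac12$, which is not enough. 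So I would instead refine: pass to the side carrying at least half the mass AND use that on that side $|[x, x+t_n]\cap E|/t_n$ must itself tend to $1$ (otherwise the total density couldn't reach $1$), giving $M_f(x, t_n) \ge |[x,x+t_n]\cap E|/t_n \to 1$; combined with $M_f \le 1$ always, $\Lip f(x) = 1$.

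The main obstacle, and the step I'd spend the most care on, is the necessity of "$E^c$ strongly dense." The sufficiency and the weak-density necessity are essentially bookkeeping with $f = \int_0^x \mathbf 1_E$ and Lemma \ref{novekedes}. But the reverse implication "$\Lip f(x) = 0 \Rightarrow x$ is a density point of $E^c$" requires showing that if there were a sequence of intervals $I_n \to x$ along which $E$ keeps positive density $\ge c > 0$, then because $f$ is monotone the increment of $f$ over $I_n$ is $\int_{I_n} f'$, and one must derive $\int_{I_n} f' \ge$ (something $\gtrsim c|I_n|$) to contradict $M_f(x,\cdot) \to 0$; but a priori $f'$ could vanish on $E \cap I_n$ even though $E$ has density there. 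This is exactly why the characterization needs $\Lip f = \mathbf 1_E$ \emph{exactly}, not just $\Lip f \le \mathbf 1_E$: on $E$ we have $\Lip f = 1 > 0$, hence $f' = 1$ a.e. on $E$ (since $\Lip f(x)=0 \iff f'(x)=0$, so $f'(x)\neq 0$ wherever $\Lip f(x)=1$, and combined with $0\le f'\le 1$... actually $f'$ need not equal $1$, only be positive). I would therefore argue via the contrapositive using the Lebesgue density theorem on the set $\{f' > 0\}$: a.e.\ point of $\{f'>0\}$ is a density point of it, $\{f' > 0\} \subseteq \overline{\{ \Lip f > 0\}} \cap (\text{a.e.}) = E$ up to measure zero since $\Lip f = 0$ on $E^c$; so if $E^c$ is not strongly dense at some $x\in E^c$ one finds nearby intervals with definite $E$-mass, on which $f'$ is positive on a definite fraction, forcing $f(b_n)-f(a_n) \ge \eta |I_n|$ for some $\eta>0$, whence $\limsup M_f(x, |I_n|) \ge \eta > 0$, contradicting $\Lip f(x)=0$. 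Making the quantitative estimate "positive $E$-density $\Rightarrow$ positive $f'$-integral" precise — via $\{f'>0\}=\bigcup_m\{f'>1/m\}$ and choosing $m$ so that $\{f'>1/m\}$ carries a fixed fraction of the $E$-mass — is the technical heart of the argument.
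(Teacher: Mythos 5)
Your sufficiency argument is sound and follows the paper's route: set $f(x) = \int_0^x \mathbf{1}_E(t)\,dt$ and verify both cases. The weighted-average step you gesture at (``on that side the density must itself tend to $1$'') can be tightened by writing $I_n = [x-s_n, x+t_n]$ and noting that $\frac{s_n(1-\rho_n^-) + t_n(1-\rho_n^+)}{s_n+t_n} \to 0$ forces $\min(1-\rho_n^-, 1-\rho_n^+) \to 0$, since a weighted average dominates the minimum. The first half of your necessity direction ($E$ weakly dense) is likewise fine and uses only $|f'| \le \mathbf{1}_E$ a.e.

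The genuine gap is in the ``$E^c$ strongly dense'' half of necessity, and it stems from a misconception you state explicitly: ``actually $f'$ need not equal $1$, only be positive.'' This is false. By Lemma \ref{novekedes}, $f$ is Lipschitz, hence differentiable a.e.; at any point $x$ of differentiability one has $\Lip f(x) = |f'(x)|$ (as noted in the introduction of the paper), and since $f$ is increasing this gives $f'(x) = \Lip f(x) = \mathbf{1}_E(x)$ a.e. That single observation is the whole content of the paper's necessity argument: it yields $f(y)-f(x) = \int_x^y f' = |E\cap[x,y]|$ for all $x<y$, after which both density claims drop out --- in particular, for $x\in E^c$ and any $I_n=[a_n,b_n]\to x$ one gets $|E\cap I_n| = f(b_n)-f(a_n) \le 2|I_n|\,M_f(x,|I_n|) \to 0$. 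Instead, you spend the bulk of the proposal on a Lebesgue-density-theorem workaround on $\{f'>0\}$ (decomposing into $\{f'>1/m\}$, etc.) which you never complete and which, as sketched, does not close: knowing $\{f'>1/m\}$ carries a fixed global fraction of $|E|$ says nothing about its density inside a particular shrinking interval $I_n$ around a given $x\in E^c$. Once you replace the weaker fact $\Lip f(x)=0 \Leftrightarrow f'(x)=0$ with the full identity $\Lip f(x) = |f'(x)|$ at differentiability points, the difficulty you flag as ``the technical heart of the argument'' disappears.
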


 \begin{proof} Assume that $E$ is monotone $ \Lip 1$.  Then we can choose a continuous, monotone increasing function $f$ such that
$  \Lip
  f=\mathbf{1}_E$.
By Lemma \ref{novekedes} $f$ is Lipschitz and therefore differentiable   a.e. 
Since $  \Lip
  f= \mathbf{1}_E
 $ and $f$ is increasing, we conclude that $f'(x)= \mathbf{1}_E
 (x)$ a.e.~and we have
 \begin{equation}\label{f E}
f(y)-f(x)=\int_x^y  \mathbf{1}_E
 (t)\, dt= |E \cap [x,y]| \text{    for all    } x < y.
\end{equation}
From (\ref{f E}) and the definition of $ \Lip f$ it is straightforward to show that
$E$ is weakly dense and $E^c$ is strongly dense.

Now assume that $E$ is weakly dense and $E^c$ is strongly dense.
Then let  $f(x)=\int_{x_0}^x  \mathbf{1}_E
 (t)\, dt$ by selecting an arbitrary $x_{0}$. It is straightforward to show that $  \Lip
  f= \mathbf{1}_E
 $ and therefore $E$ is monotone $ \Lip 1$.
\end{proof}

For the characterization of monotone  $ \lip 1$  sets we need a few new definitions:

 \begin{definition}\label{onesideddense}
Suppose that $I_n \to x$.  If each $I_n$ is centered at $x$ we say that {\it $\{I_n\}$ center converges to $x$} and we write $I_n \stackrel{c}{\to} x$.

The set $E$ is {\it weakly center dense} at $x$ if there exists a sequence $\{I_n\}$ such that
$I_n \stackrel{c}{\to} x,$ and $\frac{|E \cap I_n|}{|I_n|}\to 1$.
The set $E$ is {\it weakly center dense} if $E$ is weakly center dense at every point $x \in E$.

The set $E$ is {\it strongly one-sided dense} at $x$ if for any sequence $\{I_n\}=\{[x-r_n,x+r_n]\}$ such that
$r_n \searrow 0$  we have $\max\{\frac{|E \cap [x-r_n,x]|}{r_n},\frac{|E \cap [x,x+r_n]|}{r_n}\}\to 1$.
The set $E$ is {\it strongly one-sided dense} if $E$ is strongly one-sided dense at every point $x \in E$.
\end{definition}

 \begin{remark}\label{densityimplications}
 It is easy to see that if $E$ is right- or left-dense in the ordinary Lebesgue density sense  then it is
strongly one-sided dense. The reverse implication is not true. Indeed,    it is not difficult to see that  the set $$E=\bigcup_{n=1}^{\infty} [1/2^{(2n+1)^2},n/2^{(2n)^2}]\cup  [-n/2^{(2n+1)^2},-1/2^{(2n+2)^2}]$$
is strongly one-sided dense at $0$ according to our notation but   it is not right- or left-dense in
the ordinary Lebesgue density sense.

 The observant reader will note that we have not defined {\it strongly center dense} or {\it weakly one-sided dense}.  The reason for this is that defining
these terms in the obvious way would be redundant since strongly center dense sets would be equivalent to strongly dense sets and weakly one-sided dense sets would be equivalent to weakly dense sets.  We also observe that the following implications hold:
$$\text{    strongly dense    } \Rightarrow \text{    strongly one-sided dense,    }$$
$$\text{    weakly center dense    } \Rightarrow \text{    weakly dense    }.$$
Note that neither of the above implications is reversible: a closed interval is strongly one-sided dense and weakly dense, but not strongly dense or weakly center dense.
\end{remark}

 \begin{theorem}\label{lip1monotonethm}
The set $E$ is monotone $ \lip 1$  if and only if $E$ is strongly one-sided dense and $E^c$ is weakly center dense.
\end{theorem}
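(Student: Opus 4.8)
The plan is to follow the proof of Theorem~\ref{Lip1monotonethm} almost verbatim, replacing the $\limsup$ that governs $\Lip f$ by the $\liminf$ that governs $\lip f$. The engine behind both arguments is the observation that for a monotone (say increasing) function $f$ satisfying $f(y)-f(x)=|E\cap[x,y]|$ one has the explicit formula
\begin{equation}\label{Mformula}
M_f(x,r)=\max\left\{\frac{|E\cap[x-r,x]|}{r},\ \frac{|E\cap[x,x+r]|}{r}\right\}\le 1,
\end{equation}
because for increasing $f$ the supremum defining $M_f(x,r)$ is attained at one of the endpoints $x\pm r$.

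For necessity, suppose $E$ is monotone $\lip 1$; replacing the witness $f$ by $-f$ if needed (which leaves $\lip f$ unchanged) we may assume $f$ is increasing. Since $\lip f=\mathbf 1_E$, in particular $\lip f\le\mathbf 1_E$, so Lemma~\ref{novekedes} gives that $f$ is $1$-Lipschitz and absolutely continuous, and at a.e.\ point $f$ is differentiable with $f'(x)=|f'(x)|=\lip f(x)=\mathbf 1_E(x)$ (using monotonicity and the fact, noted in the introduction, that $\lip f(x)=|f'(x)|$ wherever $f$ is differentiable). Integrating, $f(y)-f(x)=|E\cap[x,y]|$ for all $x<y$, so \eqref{Mformula} holds. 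Now read off the two density conditions. For $x\in E$ we have $\lip f(x)=1$, and since $M_f(x,r)\le 1$ this is equivalent to $M_f(x,r)\to 1$ along \emph{every} sequence $r_n\searrow 0$, which by \eqref{Mformula} is exactly strong one-sided density of $E$ at $x$. For $x\in E^c$ we have $\liminf_{r\to0^+}M_f(x,r)=0$, so some $r_n\searrow 0$ has $M_f(x,r_n)\to 0$; then $I_n=[x-r_n,x+r_n]\stackrel{c}{\to}x$ satisfies $|E\cap I_n|/|I_n|\le M_f(x,r_n)\to0$ by \eqref{Mformula}, so $E^c$ is weakly center dense at $x$.

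For sufficiency, assume $E$ is strongly one-sided dense and $E^c$ is weakly center dense, and put $f(x)=\int_{x_0}^x\mathbf 1_E(t)\,dt$ for an arbitrary $x_0$, so $f$ is continuous, increasing, $1$-Lipschitz, and $f(y)-f(x)=|E\cap[x,y]|$; hence \eqref{Mformula} again holds. For $x\in E$, strong one-sided density says the right side of \eqref{Mformula} tends to $1$ along every $r_n\searrow0$, so $\lip f(x)=1$. For $x\in E^c$, weak center density of $E^c$ supplies $I_n=[x-r_n,x+r_n]\stackrel{c}{\to}x$ with $|E\cap I_n|/|I_n|\to0$, and then from \eqref{Mformula}, $M_f(x,r_n)\le|E\cap I_n|/r_n=2\,|E\cap I_n|/|I_n|\to0$, so $\lip f(x)=0$. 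Thus $\lip f=\mathbf 1_E$, and $E$ is monotone $\lip 1$.

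The argument is essentially bookkeeping once \eqref{Mformula} is in hand, and I do not expect a genuine technical obstacle. The one point to get exactly right is the quantifier asymmetry between the two conditions: on $E$ we need strong one-sided density (a ``for every sequence'' property) because $\liminf M_f(x,r)=1$ combined with $M_f\le1$ forces convergence along all sequences, whereas on $E^c$ the weaker, ``exists a sequence'' notion of weak center density is precisely what is produced by, and needed for, $\liminf M_f(x,r)=0$. Matching these quantifiers correctly in both directions is the only delicate step.
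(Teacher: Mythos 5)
Your proof is correct and follows exactly the route the paper intends: the paper's own ``proof'' of Theorem~\ref{lip1monotonethm} consists of the single remark that it is ``straightforward and similar to the proof of Theorem~\ref{Lip1monotonethm},'' and your argument carries out precisely that adaptation, with the $\limsup$/$\liminf$ and quantifier bookkeeping (strong one-sided density on $E$, weak center density on $E^c$) handled correctly.
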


 \begin{proof}
The proof of Theorem \ref{lip1monotonethm} is {straightforward and} similar to the proof of Theorem \ref{Lip1monotonethm}. {We leave it up to the reader.}
\end{proof}

\section{Necessary and/or sufficient conditions for general $ \Lip 1$ and $ \lip 1$ sets}\label{*secLip}

In Theorem \ref{Lip1weaklydense} we give a necessary condition for a set to be $ \Lip 1$.  We will see in Section \ref{*secnotlip14} (Theorem \ref{notLip1weaklydense}) that this condition is not sufficient.

 \begin{theorem}\label{Lip1weaklydense}
If $E\subset {\ensuremath {\mathbb R}}$ is $ \Lip 1$ then $E$ is a weakly dense $G_\delta$ set.
\end{theorem}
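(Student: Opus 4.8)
The plan is to establish the two assertions separately: that a $\Lip 1$ set is $G_\delta$, and that it is weakly dense.

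\medskip

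\textbf{Step 1: $E$ is $G_\delta$.} Suppose $\Lip f = \mathbf 1_E$ for a continuous $f$. Since $\Lip f \le \mathbf 1_E \le 1$ everywhere, Lemma \ref{novekedes} applies (with $\Lip f$ in place of $\lip f$, noting $\lip f \le \Lip f \le \mathbf 1_E$), so $f$ is Lipschitz, hence absolutely continuous with $|f'|\le \mathbf 1_E$ a.e. Now $E = \{x : \Lip f(x) \ge 1\} = \{x : \Lip f(x) = 1\}$ since $\Lip f$ takes only values $0$ and $1$. The key observation is that $\Lip f(x) \ge 1$ iff for every $\varepsilon>0$ and every $\delta>0$ there exists $r\in(0,\delta)$ with $M_f(x,r) > 1-\varepsilon$. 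I would write
\[
E = \{x : \Lip f(x)\ge 1\} = \bigcap_{k=1}^\infty \bigcap_{m=1}^\infty \bigcup_{r\in(0,1/m)\cap\mathbb Q}\left\{x : M_f(x,r) > 1 - \tfrac1k\right\}.
\]
For fixed rational $r$, the map $x\mapsto M_f(x,r) = \sup\{|f(x)-f(y)| : |x-y|\le r\}/r$ is continuous (indeed Lipschitz) in $x$ because $f$ is continuous and the sup is over a compact set; hence each set $\{x : M_f(x,r) > 1-1/k\}$ is open, the inner union over rational $r$ is open, and $E$ is a countable intersection of open sets, i.e. $G_\delta$. A small care point: one must check that restricting $r$ to rationals does not change the $\limsup$, which follows from continuity of $r \mapsto M_f(x,r)$ — or more simply, one can avoid this by noting $\{x : \sup_{0<r<1/m} M_f(x,r) > 1-1/k\}$ is a union over all real $r$ and is still open since each $\{M_f(\cdot,r)>1-1/k\}$ is open.

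\medskip

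\textbf{Step 2: $E$ is weakly dense.} Fix $x\in E$, so $\Lip f(x) = 1$, meaning $\limsup_{r\to 0^+} M_f(x,r) = 1$. Thus there is a sequence $r_n\searrow 0$ with $M_f(x,r_n)\to 1$, and for each $n$ a point $y_n$ with $|x-y_n|\le r_n$ and $|f(x)-f(y_n)|/r_n \to 1$ (using a near-maximizer and $|x-y_n|\le r_n$). Let $I_n$ be the closed interval with endpoints $x$ and $y_n$; then $I_n\to x$ (after passing to a subsequence to ensure $|I_n|\to 0$, which holds since $|I_n|\le r_n$), and actually I should be careful to get $|f(x)-f(y_n)|/|I_n|$ bounded below — since $|I_n| = |x-y_n| \le r_n$ we get $|f(x)-f(y_n)| \ge |I_n|\cdot(1-o(1))$ only after relating $r_n$ and $|I_n|$; cleaner is to take $y_n$ with $|x-y_n| \le r_n$ realizing the sup up to error $r_n/n$, set $\rho_n = |x - y_n|$, and observe $|f(x)-f(y_n)| = \rho_n \cdot \frac{|f(x)-f(y_n)|}{\rho_n} \ge \rho_n M_f(x,\rho_n)$... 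Actually the robust route: on the interval $I_n = [x,y_n]$ (say $y_n > x$), absolute continuity gives $|f(x)-f(y_n)| = |\int_{I_n} f'| \le \int_{I_n}|f'| \le |E\cap I_n|$. Combined with the lower bound $|f(x)-f(y_n)| \ge (1-\varepsilon_n)|I_n|$ for suitable $\varepsilon_n\to 0$, this yields $|E\cap I_n|/|I_n| \ge 1-\varepsilon_n \to 1$, so $E$ is weakly dense at $x$.

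\medskip

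\textbf{Main obstacle.} The genuinely delicate point is extracting, from $M_f(x,r_n)\to 1$, a sequence of intervals $I_n\to x$ (nondegenerate, shrinking, containing $x$) on which the \emph{difference quotient relative to the length of $I_n$} — not relative to $r_n$ — tends to $1$; the sup in $M_f(x,r)$ is over $|x-y|\le r$ so the maximizing $y_n$ might sit well inside $(x-r_n,x+r_n)$, and one must argue that then $|x-y_n|$ itself works as the radius. The resolution is that if $y_n$ is an $\varepsilon_n$-near-maximizer then $M_f(x,|x-y_n|) \ge |f(x)-f(y_n)|/|x-y_n| \ge M_f(x,r_n) - \varepsilon_n$ (since $M_f(x,\cdot)$ for the value realized at $y_n$ only improves when we shrink the radius to exactly $|x-y_n|$), so we may simply replace $r_n$ by $|x-y_n|$ and $I_n = [x - |x-y_n|, x]$ or $[x, x+|x-y_n|]$ does the job. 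Once this is in hand, the rest is the absolute-continuity estimate above. I would present Step 1 and Step 2 in that order, invoking Lemma \ref{novekedes} at the very start of both.
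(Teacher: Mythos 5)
Your proof is correct and follows essentially the same route as the paper: both parts rest on Lemma \ref{novekedes} (after noting $\lip f \le \Lip f = \mathbf{1}_E$), and your $G_\delta$ decomposition $\bigcap_k\bigcap_m\{x:\sup_{0<r<1/m}M_f(x,r)>1-1/k\}$ is just a two-index version of the paper's diagonalized $\bigcap_n\{x:\exists r\in(0,1/n),\ M_f(x,r)>1-1/n\}$, with the same openness observation. Your Step~2 spells out what the paper compresses into a single clause (``Lemma \ref{novekedes} implies that $E$ is weakly dense''), and your handling of the near-maximizer point---replacing $r_n$ by $|x-y_n|$ and noting the difference quotient only improves when the radius shrinks---is exactly the right way to close that gap.
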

 \begin{proof}
Suppose that $E$ is $ \Lip 1$.
Lemma \ref{novekedes} implies that $E$ is weakly dense.
Let $f\colon {\ensuremath {\mathbb R}}\rightarrow {\ensuremath {\mathbb R}}$ be such that $ \Lip f= \mathbf{1}_E$.
\tr{Since}
$$
E =  \bigcap \limits_{n=1}^\infty \Big\{x\in {\ensuremath {\mathbb R}} : \text{  there exists $r\in\Big(0,\frac{1  }{n}\Big)$ such that }M_f(x,r)>1-\frac{1}{n}  \Big\}
$$
and the sets on the right are open, we obtain that $E$ is $G_\delta$.
\end{proof}

The next definition will be used to obtain a necessary and sufficient condition for
$ \Lip 1$ sets in Theorem \ref{Lip1thm}.

 \begin{definition}\label{ternary}
Let $E$ be a measurable subset of $ \mathbb R
 $ and suppose that $E_1,E_0,E_{-1}$ are pairwise disjoint   measurable  sets whose union is $ \mathbb R
 $.
Then we say that $E_1,E_0,E_{-1}$ is a {\it ternary decomposition of $ \mathbb R
 $} with respect to $E$ if the following conditions hold:
 \begin{align}\label{E1 or E-1}
\bullet\ \ & \forall x \in E \text{    either    } E_1 \text{    or    }E_{-1} \text{    is weakly dense at    }x,\\
\bullet\ \ &\label{E1-E-1}
\forall x \notin E \text{    and    } \forall I_n \to x \text{    we have    }\frac{||E_1 \cap I_n| - |E_{-1}\cap I_n||}{|I_n|} \to 0.
\end{align}

If $E_1,E_0,E_{-1}$ is a ternary decomposition of $ \mathbb R
 $ with respect to $E$ we write $E \sim (E_1,E_0,E_{-1})$.
\end{definition}
 \bigskip

 \begin{theorem}\label{Lip1thm} A set
$E$ is $ \Lip 1$ if and only if there is a ternary decomposition of $ \mathbb R
 $ with respect to $E$.
\end{theorem}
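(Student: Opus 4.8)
The plan is to prove the two directions separately. For the easier direction, suppose $E$ is $\Lip 1$, so there is a continuous $f$ with $\Lip f = \mathbf{1}_E$. By Lemma \ref{novekedes} applied with the roles suitably adjusted (note $\lip f \le \Lip f = \mathbf{1}_E$), $f$ is Lipschitz, hence absolutely continuous, and $|f'| \le \mathbf{1}_E$ a.e.; in fact $\Lip f(x) = 0$ forces $f'(x) = 0$ at points of differentiability outside $E$, so $f' = 0$ a.e.\ on $E^c$ and $|f'| \le 1$ a.e.\ on $E$. Define $E_1 = \{x : f'(x) \text{ exists and } f'(x) > 0\}$, $E_{-1} = \{x : f'(x) \text{ exists and } f'(x) < 0\}$, and $E_0 = \mathbb{R} \setminus (E_1 \cup E_{-1})$ (this absorbs the measure-zero non-differentiability set and the zero-derivative set). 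For a point $x \in E$, the condition $\Lip f(x) = 1$ means there are intervals $I_n \to x$ (one-sided, shrinking) along which the average rate of change of $f$ tends to $\pm 1$; using $f(b) - f(a) = \int_a^b f'$ and $|f'| \le 1$, one shows that on these intervals the density of $E_1$ (or of $E_{-1}$, depending on the sign) tends to $1$, giving \eqref{E1 or E-1}. For $x \notin E$, the condition $\Lip f(x) = 0$ says $M_f(x,r) \to 0$, i.e.\ $|f(y) - f(x)| = o(|y-x|)$ uniformly for $y$ near $x$; writing $f(x+t) - f(x) = \int_x^{x+t} f' = |E_1 \cap [x,x+t]| \cdot (\text{avg}) - |E_{-1} \cap [x,x+t]| \cdot (\text{avg})$ more carefully as $\int (\mathbf{1}_{E_1} - \mathbf{1}_{E_{-1}}) f'$, and comparing with $|E_1 \cap I| - |E_{-1} \cap I|$, one extracts \eqref{E1-E-1}. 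The main subtlety here is that $|f'|$ need not equal $1$ on $E_1 \cup E_{-1}$, so the identification of "density of $E_1$" with "the integral of $f'$" is only approximate; one should argue that the set where $0 < |f'| < 1-\epsilon$ can be controlled, or reformulate the decomposition using level sets of $f'$ and a limiting argument. I expect this direction to be mostly bookkeeping once the right definitions of $E_1, E_{-1}, E_0$ are chosen — possibly one wants $E_1 = \{f' > 0\}$ up to null sets but then needs density arguments to be robust under null modifications, which they are since all the relevant quantities are integrals.

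For the harder direction, suppose $E \sim (E_1, E_0, E_{-1})$ is given. The natural candidate is
$$
f(x) = \int_{x_0}^x \big(\mathbf{1}_{E_1}(t) - \mathbf{1}_{E_{-1}}(t)\big)\, dt,
$$
for an arbitrary basepoint $x_0$; this $f$ is Lipschitz with $f'(x) = \mathbf{1}_{E_1}(x) - \mathbf{1}_{E_{-1}}(x)$ a.e. The task is to verify $\Lip f = \mathbf{1}_E$. For $x \in E$: by \eqref{E1 or E-1}, say $E_1$ is weakly dense at $x$, so there are $I_n \to x$ with $|E_1 \cap I_n|/|I_n| \to 1$; then $|f(\text{right endpoint}) - f(\text{left endpoint})| \ge |E_1 \cap I_n| - |E_{-1} \cap I_n| - |E_0 \cap I_n|$, which is at least $(2 \cdot \tfrac{|E_1 \cap I_n|}{|I_n|} - 1)|I_n| \to |I_n|$, giving $\Lip f(x) \ge 1$; combined with the universal bound $\Lip f \le 1$ (from $|f'| \le 1$ and Lipschitz-ness, or directly $M_f(x,r) \le 1$), we get $\Lip f(x) = 1$. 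The issue is that $I_n$ need not be centered at $x$; one must pass from a short interval $I_n$ not containing $x$ as an endpoint to the relevant one-sided increments at $x$, but since $x \in I_n$ and $|I_n| \to 0$, the sub-interval of $I_n$ on one side of $x$ is comparable, and a pigeonhole/averaging argument recovers a one-sided interval with density close to $1$. For $x \notin E$: we must show $\Lip f(x) = 0$, i.e.\ $M_f(x,r) \to 0$. For $y$ with $|x - y| \le r$, $|f(x) - f(y)| = \big|\int_x^y (\mathbf{1}_{E_1} - \mathbf{1}_{E_{-1}})\big| = \big||E_1 \cap [x,y]| - |E_{-1} \cap [x,y]|\big|$, and taking $I = [x \wedge y, x \vee y] \subseteq$ some $I_n \to x$, condition \eqref{E1-E-1} forces this to be $o(r)$ — but here one must be careful that \eqref{E1-E-1} is stated for $I_n \to x$ (intervals containing $x$), whereas $[x,y]$ has $x$ as an endpoint; the fix is to note $[x,y] \subseteq [x - r, x + r] =: J_r$, and $|E_1 \cap [x,y]| - |E_{-1} \cap [x,y]|$ differs from $|E_1 \cap J_r| - |E_{-1} \cap J_r|$ by at most the corresponding quantity on $J_r \setminus [x,y]$, another interval with an endpoint at $x$; iterating or using that $J_r \to x$ directly bounds the symmetric-interval version, and the one-sided pieces are controlled because $|E_1 \cap [x-r,x]| - |E_{-1} \cap [x-r,x]|$ is squeezed between $0$ and the $J_r$ quantity plus the $[x,x+r]$ quantity. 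This endpoint-versus-interior technicality is the main obstacle, and it is the same one that appears (in mirror form) in the first direction.

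I would organize the write-up as: (1) state and prove the universal bound $\Lip f \le 1$ and basic facts about the candidate $f$; (2) prove $\Rightarrow$ by constructing the decomposition and verifying \eqref{E1 or E-1}, \eqref{E1-E-1}; (3) prove $\Leftarrow$ by verifying $\Lip f(x) \ge 1$ on $E$ and $\Lip f(x) = 0$ off $E$, with a short lemma handling the conversion between "interval containing $x$" and "one-sided interval at $x$" density statements so that both directions can cite it. The genuinely delicate point throughout is this interval-position conversion together with, in the forward direction, the fact that $|f'|$ is only bounded by $1$ rather than equal to $1$ on the support of $f'$; I would address the latter by either (a) first reducing to the case where the decomposition is "tight" or (b) carrying the inequalities through with the weaker bound and checking that weak density of $E_1$ still yields $\Lip f(x) = 1$, which it does since weak density only needs the density to reach $1$ in the limit along a subsequence of intervals.
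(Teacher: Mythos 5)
Your overall structure---the decomposition $E_1, E_0, E_{-1}$ by sign of $f'$ in the forward direction and the candidate $f(x)=\int_{x_0}^x(\mathbf{1}_{E_1}-\mathbf{1}_{E_{-1}})$ in the reverse---matches the paper's proof. However, the ``main subtlety'' you flag in the forward direction, that $|f'|$ might take values strictly between $0$ and $1$ on $E_1\cup E_{-1}$, is in fact impossible, and the observation resolving it (which the paper uses and you do not make explicit) is that whenever $f'(x)$ exists one has $\Lip f(x)=|f'(x)|$. Since $\Lip f=\mathbf{1}_E$ is $\{0,1\}$-valued, $f'(x)\in\{-1,0,1\}$ at every differentiability point; thus your $E_1=\{f'>0\}$ equals $\{f'=1\}$, similarly $E_{-1}=\{f'=-1\}$, and by absolute continuity the identity $f(x)=f(0)+\int_0^x(\mathbf{1}_{E_1}-\mathbf{1}_{E_{-1}})$ holds \emph{exactly}. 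This exact identity is what makes verifying \eqref{E1-E-1} work: for $x\notin E$ and $I_n=[a_n,b_n]\to x$ one has $\big||E_1\cap I_n|-|E_{-1}\cap I_n|\big|=|f(b_n)-f(a_n)|\le |f(b_n)-f(x)|+|f(a_n)-f(x)|\le 2|I_n|\,M_f(x,|I_n|)=o(|I_n|)$. Your proposed workaround (b), carrying the weaker bound $|f'|\le 1$ through the estimates, genuinely fails at this point: if $f'$ took the value $1/2$ on part of $E_1$, then smallness of $\int_{I_n}f'$ would not control $|E_1\cap I_n|-|E_{-1}\cap I_n|$. Your option (a) is only correct if ``tight'' is read as exactly the observation above, but you never state it, and you present it as an unresolved choice rather than a settled fact.

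A second, more minor point: the ``endpoint-versus-interior technicality'' you worry about in the reverse direction rests on a misreading of Definition \ref{weak dense}. The notation $I_n\to x$ requires only $x\in I_n$ and $|I_n|\to 0$; it does not require $x$ to lie in the interior of $I_n$. Consequently the intervals $[x\wedge y,\,x\vee y]$ you encounter are legitimate instances of $I_n\to x$, and \eqref{E1-E-1} applies to them directly. The detour through centered intervals $[x-r,x+r]$ and the subsequent decomposition into one-sided pieces is unnecessary. (Your handling of the non-centered intervals in the $x\in E$ case of the reverse direction, by contrast, is a real step, and your sketch of it is fine.)
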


 \begin{proof}
Suppose that $E \sim (E_1,E_0,E_{-1})$.
Define
$$f(x)=\int_0^x \mathbf{1}_{E_1}(t)-\mathbf{1}_{E_{-1}}(t) \, dt.$$
Then straightforward calculations show that $  \Lip
  f=\mathbf{1}_E$.

Working in the opposite direction, now assume that $  \Lip
  f= \mathbf{1}_E
 $.
Then by Lemma \ref{novekedes}, $f$ is  Lipschitz  and hence $f$ is differentiable almost everywhere and wherever $f'(x)$ is defined $f'(x)$ is equal to either $1,0$ or $-1$.
For $i=1,-1$ define $E_i=\{x : \, f'(x)=i\}$ and let $E_0=\mathbb R
  \backslash(E_1 \cup E_{-1})$.
  By  absolute continuity of $f$  we have that
$$f(x)=f(0)+\int_0^x  f'
 (t)\, dt=f(0)+\int_0^x \mathbf{1}_{E_1}(t)-\mathbf{1}_{E_{-1}}(t) \, dt$$ and it is straightforward to show that
$E\sim (E_1,E_0,E_{-1})$.
\end{proof}
 \bigskip

 \begin{remark}\label{*remtre} Suppose that $E \sim (E_1,E_0,E_{-1})$.
Then we can find $F_1,F_0,F_{-1}$ such that
$E \sim (F_1,F_0,F_{-1})$ and $E = F_1 \cup F_{-1}$.
\end{remark}

To verify that Remark \ref{*remtre} is true assume that $E \sim (E_1,E_0,E_{-1})$.
\tr{By the Lebesgue density theorem} almost every element in a set is a density point of the set and hence it follows from the definition of a ternary decomposition that
$|E_i \backslash E|=0$ for $i =1,-1$ and $|E_0 \cap E|=0$.
Thus, if we define
$F_1=E\backslash E_{-1}$, $F_{-1}=E_{-1} \cap E$, and $F_0=\mathbb R \setminus E$, then we have $E \sim (F_1,F_0,F_{-1})$ and $E = F_1 \cup F_{-1}$.
 \bigskip

 \begin{remark}\label{verify}
Although Theorem \ref{Lip1thm} gives a characterization of $ \Lip 1$ sets, it is not always easy to verify whether or not a given set $E$ has a ternary decomposition.  One simple example is
$E=(0,\infty)$.  In this case, one can verify that $E_0=(-\infty,0]$, $E_{-1}=\bigcup_{n=1}^\infty (\frac1{2n+1},\frac1{2n}]$,
$E_1=(\bigcup_{n=1}^\infty (\frac1{2n},\frac1{2n-1}])\cup (1,\infty)$ gives a ternary decomposition of $ {\mathbb R}$ with respect to $E$ and therefore $E$ is $ \Lip 1$.
\end{remark}


Next we want to find some necessary and some sufficient conditions for $\lip 1$ sets. 
\tr{For this purpose we will need to use} Lemma \ref{novekedes}.

 \begin{theorem}\label{*lipnec}
If $E\subset {\ensuremath {\mathbb R}}$ is $ \lip 1$ then $E$ is a strongly one-sided dense $F_\sigma$ set.
\end{theorem}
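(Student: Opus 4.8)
The plan is to read off both conclusions from the single hypothesis that $\lip f=\mathbf 1_E$ for some continuous $f$, with Lemma~\ref{novekedes} serving as the bridge between the oscillation quantity $M_f$ and the Lebesgue measure of $E$ on small intervals. Since $\lip f=\mathbf 1_E\le\mathbf 1_E$, Lemma~\ref{novekedes} applies and gives that $f$ is Lipschitz, say with constant $L$, that $f$ is absolutely continuous, and that
\[
|f(a)-f(b)|\le |[a,b]\cap E|\qquad\text{whenever }a<b .
\]
I will also use repeatedly that $\lip f$ takes only the values $0$ and $1$, so that $E=\{x:\lip f(x)=1\}=\{x:\lip f(x)>1/2\}$.

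To see that $E$ is $F_\sigma$, the first step is to check that for each fixed $r>0$ the function $x\mapsto M_f(x,r)$ is continuous. Writing $g_r(x)=\sup\{|f(x)-f(y)|:|x-y|\le r\}$, translating competitors $y$ by $x'-x$ and using the $L$-Lipschitz bound on $f$ yields $|g_r(x)-g_r(x')|\le 2L|x-x'|$, so $g_r$, and hence $M_f(\cdot,r)=g_r/r$, is continuous; in particular each set $\{x:M_f(x,r)\ge c\}$ is closed. The second step is to unwind the definition of the $\liminf$ in \eqref{Lipdef}: $\lip f(x)>1/2$ holds if and only if $M_f(x,r)\ge\tfrac12+\tfrac1m$ for all sufficiently small $r$, for some $m\in\mathbb N$. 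Consequently
\[
E=\bigcup_{m=1}^{\infty}\bigcup_{k=1}^{\infty}\ \bigcap_{r\in(0,1/k)}\Big\{x:M_f(x,r)\ge\tfrac12+\tfrac1m\Big\},
\]
a countable union of closed sets, so $E$ is $F_\sigma$.

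For strong one-sided density, fix $x\in E$, so $\liminf_{r\to0^+}M_f(x,r)=1$. Given $\varepsilon\in(0,1)$ choose $\delta>0$ so that $M_f(x,r)>1-\varepsilon$ for every $r\in(0,\delta)$. For such an $r$ there is a point $y$ with $|x-y|\le r$ and $|f(x)-f(y)|>(1-\varepsilon)r$; assuming $y\ge x$ (the case $y<x$ being symmetric), we have $[x,y]\subseteq[x,x+r]$, and Lemma~\ref{novekedes} gives
\[
(1-\varepsilon)r<|f(x)-f(y)|\le |[x,y]\cap E|\le |[x,x+r]\cap E| .
\]
Since the analogous bound holds on $[x-r,x]$ in the other case, we conclude that $\max\{|E\cap[x-r,x]|,\,|E\cap[x,x+r]|\}/r>1-\varepsilon$ for every $r\in(0,\delta)$. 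As this maximum never exceeds $1$, it tends to $1$ as $r\to0^+$, hence along every sequence $r_n\searrow0$; by Definition~\ref{onesideddense} this is exactly the statement that $E$ is strongly one-sided dense at $x$, and since $x\in E$ was arbitrary, $E$ is strongly one-sided dense.

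The parts requiring a little care rather than the bulk of the work are both in the $F_\sigma$ step: representing $E^c$ as $\{x:\lip f(x)<1\}$ only yields an $F_{\sigma\delta}$ set, so the two-valuedness of $\lip f$ must be used to write $E=\{x:\lip f(x)>1/2\}$; and the continuity (equivalently, upper semicontinuity) of $M_f(\cdot,r)$ for fixed $r$, while elementary, is what makes the level sets above closed. Once these observations are in place, the strong one-sided density conclusion is an immediate consequence of Lemma~\ref{novekedes}.
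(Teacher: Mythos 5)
Your proof is correct and follows essentially the same approach as the paper: both invoke Lemma~\ref{novekedes} to translate the $\liminf$ oscillation bound at points of $E$ into the one-sided measure-density estimates, and both exploit the continuity of $x\mapsto M_f(x,r)$ for fixed $r$ to get the descriptive-set-theoretic conclusion. The only cosmetic difference is that you write $E$ directly as a countable union of closed sets, whereas the paper writes $E^c=\bigcap_n\{x:\exists r\in(0,1/n),\ M_f(x,r)<1/2\}$ as a $G_\delta$; these are dual formulations of the same observation.
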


 \begin{proof}
Suppose that $E$ is $ \lip 1$. Lemma \ref{novekedes} implies that $E$ is strongly one-sided dense. 
Let $f\colon {\ensuremath {\mathbb R}}\rightarrow {\ensuremath {\mathbb R}}$ such that $ \lip f= \mathbf{1}_E$.
As 
$$
E^c =  \bigcap_{n=1}^\infty \Big\{x\in {\ensuremath {\mathbb R}} : \text{  there exists $r\in\Big(0,\frac{1  }{n}\Big)$ such that }M_f(x,r)<\frac{1}{2}  \Big\}
$$
and the sets on the right are open, the set $E^c$ is $G_\delta$ hence $E$ is $F_\sigma$. 
\end{proof}

{Theorem \ref{*lipnec} provides a necessary condition for a function to be $\lip 1$.  The following result provides a sufficient condition.}

  \begin{theorem}\label{lip 1 thm}
Suppose that $E=\sqcup_{n=1}^\infty E_n$ where for each $n \in  \mathbb{N}
 $,  $E_n$ is closed and strongly one-sided dense.
Then $E$ is $ \lip 1$.
\end{theorem}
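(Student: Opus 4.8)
The plan is to construct the continuous function $f$ with $\lip f = \mathbf{1}_E$ directly, building it as a locally uniform limit of piecewise linear functions, one ``layer'' handling each $E_n$. Since $E = \sqcup_{n=1}^\infty E_n$ with each $E_n$ closed and strongly one-sided dense, the strategy is: on the open set $E^c$ we want $f$ to behave like a monotone-type primitive so that $M_f(x,r)$ dips below $1/2$ along a sequence of radii (killing $\lip f$ there), while at points of $E$, strong one-sided density should force $M_f(x,r)\to 1$ as $r\to 0^+$, giving $\lip f(x)=1$. The first step is to record the obvious necessary bound: whatever $f$ we build, Lemma \ref{novekedes} (applied after checking $\lip f\le \mathbf 1_E$) gives $|f(a)-f(b)|\le |[a,b]\cap E|$, and at points $x\in E$ the strong one-sided density of $E$ at $x$ together with a suitable choice of $f'=\pm 1$ near $x$ will be what pushes $\limsup$-type ratios up to $1$; we must make sure our construction actually attains $\lip f(x)=1$ and not just $\le 1$.

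Next I would set up the construction. Since each $E_n$ is closed, its complement is a countable union of open intervals; on each complementary interval we have room to insert oscillations. The idea is to write $f = \sum_n g_n$ (or build $f$ as a limit $f_k$ with $f_k \to f$ uniformly on compacta), where $g_n$ is supported essentially ``near'' $E_n$, has $|g_n'|\le 1$ where defined, and is chosen so that: (i) on $E_n$ itself, $g_n$ contributes a genuine slope-$1$ behavior at small scales (one-sidedly, matching the side on which $E_n$ is dense, which may vary from point to point — here one uses that strong one-sided density at $x$ means $\max$ of the two one-sided densities tends to $1$, so along some sequence $r_j\to 0$ one of the two sides has density $\to 1$, and we arrange $f$ to be essentially affine with slope $1$ on that side at scale $r_j$); (ii) the interactions between the different $g_n$ are controlled so that on $E_m$ for $m\neq n$, and on $E^c$, the function $g_n$ is ``flat at small scales'' — e.g. locally constant near each point of $E_m$ for $m<n$, which is possible because $E_m$ is closed so points of $E_m$ have a positive distance to the part of $E_n$ we are modifying, except we must be careful near accumulation. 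To make (ii) work cleanly, process the $E_n$ in order: when building the $n$-th layer, only modify $f$ inside the open set $\mathbb R\setminus (E_1\cup\cdots\cup E_{n-1})$ minus small neighborhoods, shrinking the allowed modification region so that past layers are frozen at points already ``handled.'' Convergence of $\sum g_n$ in sup norm on compacta is arranged by making $\|g_n\|_\infty$ small (this is compatible with $|g_n'|\le 1$ because we only need tiny oscillations supported on tiny intervals).

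The crucial verification splits into two cases. For $x\in E$, say $x\in E_n$: I would show $\lip f(x)=1$ by exhibiting, for the sequence of radii $r_j\to 0$ along which one one-sided density of $E_n$ at $x$ tends to $1$, that $M_f(x,r_j)\to 1$. The upper bound $M_f(x,r)\le 1 + o(1)$ comes from $|f'|\le 1$ a.e.\ plus the absolute continuity (so $|f(x)-f(y)|\le |x-y|\cdot(1+\varepsilon)$ locally); the lower bound comes from the fact that on the dense side, $f$ was built to be (nearly) affine with slope $1$ at scale $r_j$ after the finitely many earlier/later corrections are shown to be negligible there. For $x\notin E$, since $E^c$ is open, $x$ has a whole neighborhood in $E^c$; there $f$ is locally a ``primitive of $\pm 1$'' type function with equal amounts of $+1$ and $-1$ slope packed into every small interval (a Remark \ref{verify}-style construction near $E^c$), so that $M_f(x,r)\to 0$ along a sequence of radii, giving $\lip f(x)=0$. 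One subtlety: points of $E^c$ that are limit points of $E$ (e.g.\ limit points of infinitely many $E_n$) need the oscillations on the $E^c$-side to be arranged so the cancellation still produces small $M_f(x,r)$ despite nearby mass of $E$ where slope is $\pm 1$; this is handled by choosing, at scale comparable to $d(x,E_n)$ for the relevant $n$, to balance the $+1$ and $-1$ pieces across that scale.

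I expect the main obstacle to be exactly the bookkeeping in the overlap region: ensuring simultaneously that (a) at $x\in E_n$ the later layers $g_m$ ($m>n$) and the tails do not destroy the slope-$1$ behavior at the good scales $r_j$ — this needs the modification intervals of layer $m$ near $x$ to have length $o(r_j)$ or to be absent in a neighborhood of $x$, which forces a careful nested choice of neighborhoods and a diagonalization between the $\gamma_n$-type density parameters of $E_n$ at $x$ and the scales at which layers $>n$ are allowed to act; and (b) at $x\in E^c$ that the accumulated slope-$\pm 1$ contributions from all the nearby $E_n$'s still cancel enough. Once the construction is set up with these precautions, both the $\lip f(x)=1$ (on $E$) and $\lip f(x)=0$ (off $E$) verifications are routine $\varepsilon$-estimates using absolute continuity of $f$ and the defining property of strong one-sided density.
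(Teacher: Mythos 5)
Your overall strategy --- build $f$ as a sum $\sum_n f_n$ with each layer $f_n$ targeting $E_n$, and shrink the later layers so that they cannot disturb earlier verifications --- is indeed the paper's. However, there is a concrete error and a structural gap. The error: you write ``since $E^c$ is open, $x$ has a whole neighborhood in $E^c$,'' which is false. A countable disjoint union of closed sets is only $F_\sigma$; for instance $E=\bigcup_{n\ge1}[\tfrac{1}{2n+1},\tfrac{1}{2n}]$ has $0\in E^c$ as a limit point of $E$. You do eventually flag this case as a ``subtlety,'' but the mechanism you propose for it --- balancing $\pm1$ slopes at scale $d(x,E_n)$ --- is not what works: when infinitely many $E_n$ accumulate at $x$ you cannot cancel all of them at a single radius, and in any case balancing is unnecessary. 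Since $\lip$ is a $\liminf$, one only needs a single good radius; the paper takes $r=d(\{x\},\bigcup_{k\le n_1}E_k)$, and because each $f_n$ is chosen \emph{constant on every interval contiguous to $E_n$} with the explicit bound $0\le f_n\le 2^{-n}\min\{1,\,d(E_n,\bigcup_{k<n}E_k)\}$, the first $n_1$ layers contribute nothing on $(x-r,x+r)$ while each surviving layer contributes at most $\sup f_n\le 2^{-n}\cdot 2r$, giving $M_f(x,r)\le 2^{-n_1+1}\to 0$. This distance-controlled bound on $\|f_n\|_\infty$ and the ``constant off $E_n$'' property are precisely the devices your sketch gestures at (``shrinking the allowed modification region,'' ``make $\|g_n\|_\infty$ small'') but never pins down; they are also what make the verification $\lip f(x)=1$ at $x\in E_{n_0}$ close, since for $r$ below the distance to $\bigcup_{n\le n_1,\,n\ne n_0}E_n$ only layers with $n>n_1>n_0$ can meet $(x-r,x+r)$ and for those $\sup f_n\le 2^{-n}d(E_n,E_{n_0})\le 2^{-n}r$.

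The structural gap is that you never actually construct a layer. The paper isolates this as Lemma~\ref{small lip 1}: for a \emph{single} closed, strongly one-sided dense set $E$ and any $\varepsilon>0$ there is a continuous $f$ with $\lip f=\mathbf 1_E$ and $0\le f\le\varepsilon$. The construction is short: cut $\mathbb R$ into $\varepsilon$-intervals $[a_{i-1},a_i]$, pick $x_i$ bisecting $|E\cap[a_{i-1},a_i]|$, and set $f(x)=\int_0^x(\mathbf 1_{E^+}-\mathbf 1_{E^-})$ where $E^{\pm}$ collect the left and right halves. Closedness of the single $E$ (so \emph{its} complement really is open) gives $\lip f=0$ off $E$ for free, strong one-sided density gives $\lip f=1$ on $E$, and the $\pm$ alternation serves only to keep $f$ bounded by $\varepsilon$ --- not to kill $\lip$ anywhere. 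You should isolate and prove this lemma first; once it is in hand, the ``freeze and shrink'' layering becomes a routine geometric-tail estimate given the bound $f_n\le 2^{-n}\min\{1,d(E_n,\bigcup_{k<n}E_k)\}$.
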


We should note that simple examples show that the converse of  Theorem \ref{lip 1 thm} does not hold.  For example, non-empty, open sets are $ \lip 1$, but no non-empty, open set can be expressed as a disjoint, countable union of closed sets.

 We note that apart from unions of non-degenerate closed intervals, it is not completely trivial to construct closed and strongly one-sided dense sets.
 However, in Theorem 3.1 of \cite{[BHMVlipap]} we construct a nowhere dense closed
set which has SUDT  and hence is   strongly one-sided dense.

 \begin{remark}\label{*rennLo}  A result analogous to Theorem \ref{lip 1 thm}
 for $\Lip 1$ sets is not true.
 If $E$ is dense in $ \mathbb R
 $ and each $E_n$ is nowhere dense, then $E$ is not $ \Lip 1$. Indeed, according to Theorem \ref{Lip1weaklydense}, if $E$ were $\Lip 1$, it would be $G_{\delta}$ as well. However, a dense $G_{\delta}$ set cannot be written as the countable union of nowhere dense sets according to Baire's category theorem.
\end{remark}

The proof of Theorem \ref{lip 1 thm} {depends} on the following:

 \begin{lemma}\label{small lip 1}
Suppose that $E$ is closed and strongly one-sided dense.
Let $\varepsilon  >0$.  Then there exists a continuous function $f$ such that
 \begin{enumerate}[(i)]
\item\label{kar} $  \lip  f =\mathbf{1}_E$,
\item\label{f kicsi} $0 \le f(x) \le \varepsilon $ for all $x \in  \mathbb R$.
\end{enumerate}
\end{lemma}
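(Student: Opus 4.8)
The plan is to construct $f$ by working locally near $E$ and then patching. Since $E$ is closed, its complement is a countable disjoint union of open intervals (the "gaps") together with at most two unbounded rays; on each such gap we have complete freedom to define $f$ as long as we control $\lip f$ to be $0$ there and match the boundary behaviour. The key idea is: on $E$ itself we want $\lip f = 1$, which by the proof of Theorem \ref{Lip1monotonethm}-type reasoning suggests building $f$ so that near each point $x\in E$ there are arbitrarily small radii $r$ with $M_f(x,r)$ close to $1$ (forcing $\lip f(x)\ge 1$) while also $M_f(x,r)\le \mathbf{1}_E$-type bounds hold (forcing $\lip f(x)\le 1$, via Lemma \ref{novekedes} applied suitably). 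On the gaps we want $\lip f = 0$, i.e. $f$ should be "flat to first order" at every point of a gap; the cleanest way is to make $f$ constant on a large portion of each gap and only vary it in tiny sub-neighbourhoods of the gap endpoints, using a self-similar (dyadically rescaled) zig-zag so that the liminf of $M_f$ is $0$ while $f$ stays within $[0,\varepsilon]$.

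First I would reduce to the case where $E \subseteq [0,1]$ or handle $E$ via its intersection with dyadic blocks, so that the total variation can be kept below $\varepsilon$; since strong one-sided density is a local condition at points of $E$, it suffices to treat a bounded piece and then glue (a more careful global bookkeeping of "budgets" $\varepsilon/2^k$ on successive regions will be needed, but is routine). Next, on a neighbourhood of $E$ I would define $f$ as an integral $f(x) = \int_{x_0}^x g(t)\,dt$ of a function $g$ taking values in $\{-1,0,1\}$, chosen so that $g = \pm 1$ on $E$ (this gives $|f'| = 1 = \mathbf 1_E$ a.e. on $E$) and so that the signs of $g$ on $E$ are arranged to make $f$ oscillate, keeping $|f|$ small — here is where strong one-sided density of $E$ enters: it guarantees that for each $x\in E$ there is a sequence $r_n\searrow 0$ with, say, $|E\cap[x,x+r_n]|/r_n \to 1$, and by choosing $g$ to have constant sign on $[x,x+r_n]$ for appropriate scales we get $M_f(x,r_n)\to 1$, hence $\lip f(x)\ge 1$; the reverse inequality $\lip f(x)\le 1$ comes from $|g|\le 1$. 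On the gap intervals, I would insert a rescaled copy of a fixed "bump pattern" near each endpoint that decays geometrically toward the endpoint, ensuring $\lip f = 0$ at interior gap points and at the gap endpoints from the gap side, while $f$ returns to the prescribed boundary value and never exceeds $\varepsilon$ in absolute value.

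The main obstacle I expect is the simultaneous control at points $x\in E$ that are \emph{not} density points of $E$ and are approached by gaps from one side: there I must ensure $\lip f(x) = 1$ is still achieved, which forces me to exploit precisely the one-sided density ($\max$ of the two ratios $\to 1$) rather than two-sided density, and to make sure the zig-zags I insert in the nearby gaps on the "bad" side are small enough (in length scale, pushed far enough toward the far endpoint of the gap) that they do not spoil the $M_f(x,r_n)\approx 1$ estimate on the "good" side. A secondary technical point is verifying $\lip f = 0$ (not merely $\le 1$) at every gap point uniformly, i.e. that the geometric decay of the inserted bumps produces radii with $M_f \to 0$; this is a self-similarity computation with the fixed bump, and I would isolate it as the one genuine estimate, leaving the gluing, the $\varepsilon$-budgeting, and the routine "either $\pm1$" bookkeeping to standard arguments. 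Finally I would check continuity of the globally defined $f$, which follows since on each gap $f$ is continuous and bounded with the correct one-sided limits at the endpoints, and on $E$ (a closed set) $f$ is the restriction of a Lipschitz integral function, so the pieces match.
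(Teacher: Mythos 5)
Your core idea --- $f(x)=\int_{x_0}^x g$ with $g$ valued in $\{-1,0,1\}$, $|g|=\mathbf 1_E$, and the signs arranged so that $f$ oscillates within a small band --- is the right one and is also what the paper does. But there is a genuine gap in your argument for $\lip f(x)\ge 1$ at $x\in E$. You write that strong one-sided density ``guarantees that for each $x\in E$ there is a sequence $r_n\searrow 0$ with $|E\cap[x,x+r_n]|/r_n\to 1$,'' deduce $M_f(x,r_n)\to 1$ along that sequence, and conclude $\lip f(x)\ge 1$. That conclusion is wrong: a single sequence of good radii only yields $\limsup_{r\to 0}M_f(x,r)\ge 1$, i.e.\ $\Lip f(x)\ge 1$, not $\lip f(x)\ge 1$, since $\lip$ is a liminf. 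What you actually need --- and what the definition of strong one-sided density literally provides --- is the ``for all $r$'' statement: for every small $r$, $\max\bigl\{|E\cap[x-r,x]|/r,\,|E\cap[x,x+r]|/r\bigr\}$ is close to $1$. Combined with the fact that, away from the (countably many, discrete) points where $g$ changes sign, the one-sided increment of $f$ equals $|E\cap[x,x\pm r]|$ exactly, this gives $M_f(x,r)\ge\gamma$ for every sufficiently small $r$ and every $\gamma<1$, hence $\lip f(x)=1$.

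A second, less serious, point: the rescaled zig-zags you propose to insert in the gaps of $E$ are unnecessary and in fact create the very obstacles you flag (controlling the gap side near an endpoint $x\in E$, and verifying $\lip f=0$ inside a gap). Just take $g=0$ off $E$, so $f$ is constant on every complementary interval of the closed set $E$; then $M_f(x,r)=0$ for interior gap points once $r$ is small enough, and at a gap endpoint $x\in E$ the gap side contributes nothing while the $\max$ in the density condition does all the work on the other side. You also leave the global sign pattern unspecified and fall back on an $\varepsilon/2^k$ gluing scheme. The paper's choice is simpler and gives both (i) and (ii) at once: partition $\mathbb R$ into consecutive blocks $[a_{i-1},a_i]$ of length $\varepsilon$, pick $x_i$ in each block with $|E\cap[a_{i-1},x_i]|=|E\cap[x_i,a_i]|$, and set $g=+1$ on $E\cap[a_{i-1},x_i]$, $g=-1$ on $E\cap[x_i,a_i]$. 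Then $f(a_i)=0$ for every $i$ by telescoping, $0\le f\le\varepsilon$ automatically, and the set of sign changes is discrete so the one-sided increment identity holds cleanly at every point.
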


 \begin{proof}[Proof of Lemma \ref{small lip 1}]
For every $i\in  {\ensuremath {\mathbb Z}}$ define $E_i=[(i-1) {\varepsilon},i {\varepsilon}]=[a_{i-1},a_i]$ and choose $x_i\in E_i$ such that $|E\cap [a_{i-1},x_i]|=|E\cap[x_i,a_i]|$.
For each $i\in\mathbb{Z}$ define $E^+_i=E\cap[a_{i-1},x_i]$ and $E^-_i=E\cap[x_i,a_i]$ and let $E^+=\bigcup_{i=-\infty}^\infty E^+_i$ and $E^-=\bigcup_{i=-\infty}^\infty E^-_i$.
Define $f(x)=\int_0^x \mathbf{1}_{E^+}(t)-\mathbf{1}_{E^-}(t)\, dt$.
\tr{It is easy} to verify that \eqref{kar} and \eqref{f kicsi} hold.
\end{proof}

 \begin{proof}[Proof of Theorem \ref{lip 1 thm}]
Assume that $E=\sqcup_{n=1}^\infty E_n$, where each $E_n$ is closed and strongly one-sided dense.
\tr{Redefining} $E_1,E_2,\ldots$ we can suppose that if $n\ge 2$ then $E_n$ is bounded.   

Using Lemma \ref{small lip 1} we choose $f_1$ such that
\eqref{kar} and \eqref{f kicsi} of Lemma \ref{small lip 1} hold with $f$ replaced by $f_1$, $E$ replaced by $E_1$ and $\varepsilon =1$.
Using Lemma \ref{small lip 1} for each $n \in  \mathbb{N}\cap [2,\infty)$ we recursively choose $f_n\geq 0$ such that
(\ref{kar}) holds with $f$ replaced by $f_n$ and $E$ replaced with $E_n$ such that
 \begin{equation}\label{*fnnx}
0 \le f_n(x) \le 2^{-n} \min \bigg\{1, d   \bigg(E_n,\bigcup_{k=1}^{n-1} E_k \bigg) \bigg\}
\end{equation}
(we note that the right-hand side is positive, as $E_n$ is compact and $\bigcup_{k=1}^{n-1} E_k$ is closed). Obviously, for every $n\in {\ensuremath {\mathbb N}}$
 \begin{equation}\label{komplementeren konst}
\text{  $f_n$ is constant on each interval contiguous to $E_n$.  }
\end{equation}
Let $f(x)=\sum_{n=1}^\infty f_n$.

Suppose that $x\in E_{n_0}$ for some $n_0\in {\ensuremath {\mathbb N}}$, and $ {\varepsilon} >0$.  
Using 
\begin{equation}\label{f_n p}
\text{$f_{n}(s)\geq 0$ for any $n$ and $s$ }
\end{equation}
we infer
\begin{equation}\label{*fnox}
\Big ||f(x)-f(y)|-|f_{n_{0}}(x)-f_{n_{0}}(y)| \Big |
{\le} \Big |\sum_{n\ne n_{0}}( f_{n}(x)- f_{n}(y)) \Big | \underset{\eqref{f_n p}}\leq 
\sum_{\substack{n\ne n_0 }} \sup \limits_{t\in {\ensuremath {\mathbb R}}} f_n(t).
\end{equation}
Let 
 \begin{equation}\label{*45*}
\text{  $n_1 := \max\{n_0+1,- \lfloor\log_2 ( {\varepsilon  })  \rfloor\}$
and $r\in  \bigg ( 0, d   \Big  ( \{x\},\bigcup_{n\in {\ensuremath {\mathbb N}}\cap[1,n_1]\setminus\{n_0\}} E_n  \Big  ) \bigg )$.}
\end{equation}

For every $y\in[x-r,x+r]$ we have
 \begin{equation*}
 \begin{gathered}
\Big|\dfrac{|f(x)-f(y)|}{r}-\frac{|f_{n_0}(x)-f_{n_0}(y)|}{r}\Big| 
\underset{ \eqref{komplementeren konst}  \text{ and } \eqref{*fnox}}{\le} 
 r^{-1}\sum_{\substack{n\ne n_0 \\ E_n\cap (x-r,x+r) \neq \emptyset}} \sup \limits_{t\in {\ensuremath {\mathbb R}}} f_n(t) \\
\underset{\text{   \eqref{*fnnx}}}{\le} r^{-1} \sum_{\substack{n\ne n_0 \\ E_n\cap (x-r,x+r) \neq \emptyset}}  2^{-n} d (E_n,E_{n_0}) 
\le r^{-1} \sum_{\substack{n\ne n_0 \\ E_n\cap (x-r,x+r) \neq \emptyset}}  2^{-n} d (E_n,\{x\})   \\
\le r^{-1} \sum_{\substack{n\ne n_0 \\ E_n\cap (x-r,x+r) \neq \emptyset}} 2^{-n}r
\underset{\text{   \eqref{*45*}}}{\le}
 \sum \limits_{n=n_1+1}^\infty 2^{-n}
= 2^{-n_1}
\le  {\varepsilon}.
\end{gathered}
\end{equation*}
Thus $ \lip f(x) =  \lip f_{n_0}(x) = 1$.

Now let $x\notin E$ and $\varepsilon>0$  be  arbitrary.
If $x$ is not an accumulation point of $E$ then obviously $ \lip f(x) = 0$.
Otherwise, set $n_1 := \max\{1,- \lfloor\log_2 ( {\varepsilon})  \rfloor+1\}$ and $r :=  d  \big(\{x\},\bigcup_{1 \le n \le n_1} E_n \big)$.
For every $y\in[x-r,x+r]$
 \begin{equation*}
 \begin{gathered}
\dfrac{|f(x)-f(y)|}{r}
\underset{\eqref{komplementeren konst}  \text{ and } \eqref{f_n p}}{\le} r^{-1}\sum_{\substack{n\in {\ensuremath {\mathbb N}} \\ E_n\cap (x-r,x+r) \neq \emptyset}} \sup \limits_{t\in {\ensuremath {\mathbb R}}} f_n(t) \\
\underset{\text{   \eqref{*fnnx}}}{\le} 
r^{-1} \sum_{\substack{n\in {\ensuremath {\mathbb N}} \\ E_n\cap (x-r,x+r) \neq \emptyset}} 2^{-n} d (E_n,\bigcup_{1\leq k \leq n_1} E_k)   \\
\le r^{-1} \sum_{\substack{n\in {\ensuremath {\mathbb N}} \\ E_n\cap (x-r,x+r) \neq \emptyset}} 2^{-n}\cdot 2r \le \sum \limits_{n=n_1+1}^\infty  2\cdot2^{-n} = 2\cdot2^{-n_1} \le  {\varepsilon} .
\end{gathered}
\end{equation*}
Since $r\rightarrow 0$ as $n_1\rightarrow\infty$ (and $n_1\rightarrow\infty$ as $ {\varepsilon}\rightarrow 0$), we obtain $ \lip f(x) = 0$. 
\end{proof}

 \bigskip

The refereeing procedure for this paper took a while and during this time in \cite{BHMV2} we managed to obtain a characterization of  $\lip 1$ sets as countable unions of closed sets which are strongly one-sided dense. The above special case in
Theorem \ref{lip 1 thm} has  a  simpler proof  than the main result of 
\cite{BHMV2}.

\section{$G_{{\delta}}$ uniform density type sets are $ \Lip 1$}
\label{*secsubsuper}

\tr{In this section we prove our main result: Theorem \ref{*thUDTLip1}, which asserts that $G_\delta$ sets which are also UDT are $\Lip 1$.}

Recall that the sets $E^{\gamma,\delta}$ were defined in Definition \ref{*defudt}.

 \begin{lemma} \label{closed}
 For any $\gamma,\delta>0$ the set $E^{\gamma,\delta}$ is closed.
 \end{lemma}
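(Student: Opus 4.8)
The plan is to show that the complement of $E^{\gamma,\delta}$ is open, equivalently that $E^{\gamma,\delta}$ contains all its limit points. Suppose $x_k \to x$ with each $x_k \in E^{\gamma,\delta}$; I want to verify the defining condition at $x$, namely that for every $r \in (0,\delta]$ we have
$$\max\left\{\frac{|(x-r,x)\cap E|}{r},\frac{|(x,x+r)\cap E|}{r}\right\}\geq\gamma.$$
Fix such an $r$. The key observation is that the function $t \mapsto |(t-r,t)\cap E|$ (and similarly $t \mapsto |(t,t+r)\cap E|$) is continuous in $t$, since $|(t-r,t) \triangle (t'-r,t')| \le 2|t-t'|$, so $\big||(t-r,t)\cap E| - |(t'-r,t')\cap E|\big| \le 2|t-t'|$. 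Hence $g(t) := \max\{|(t-r,t)\cap E|, |(t,t+r)\cap E|\}$ is continuous, and $g(x_k) \ge \gamma r$ for all $k$ (using $r \le \delta$ and $x_k \in E^{\gamma,\delta}$), so $g(x) \ge \gamma r$ by passing to the limit. This gives the condition at $x$ for the fixed $r$.

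The one subtlety is that $x_k \in E^{\gamma,\delta}$ only guarantees the density bound for radii in $(0,\delta]$, and we are applying it at radius $r \le \delta$, which is fine; there is no issue of needing radii beyond $\delta$. So after fixing an arbitrary $r \in (0,\delta]$ and running the continuity argument above, we conclude $x \in E^{\gamma,\delta}$, and therefore $E^{\gamma,\delta}$ is closed.

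I expect the main (very minor) obstacle to be just writing the Lipschitz estimate for $t \mapsto |(t-r,t) \cap E|$ cleanly — this follows because the symmetric difference of the intervals $(t-r,t)$ and $(t'-r,t')$ has measure at most $2|t-t'|$, and intersecting with $E$ only decreases measure, so the map is $2$-Lipschitz, hence continuous. Everything else is a routine limit argument. No deeper idea is needed.
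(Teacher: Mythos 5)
Your proof is correct and is essentially the same as the paper's: both hinge on the continuity (in $t$) of $t \mapsto |(t-r,t)\cap E|$ and $t\mapsto |(t,t+r)\cap E|$, the paper expressing $E^{\gamma,\delta}$ as the intersection over $r\in(0,\delta]$ of the closed upper level sets of the resulting continuous max, while you equivalently verify closure under limits directly. Your explicit $2$-Lipschitz bound via the symmetric-difference estimate is a clean way to justify the continuity step that the paper simply asserts.
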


 \begin{proof} For $r>0$ we introduce the notation
 \begin{displaymath}
E_r^{\gamma}=\left\{x\in\mathbb{R}:\max\left\{\frac{|(x-r,x)\cap E|}{r},\frac{|(x,x+r)\cap E|}{r}\right\}\geq\gamma\right\}.
\end{displaymath}
Then we obviously have
 \begin{displaymath}
E^{\gamma,\delta}= \bigcap_{r:0<r\leq\delta}E_r^{\gamma}.
\end{displaymath}
\tr{Note that} the functions $x \mapsto \frac{|(x-r,x)\cap E|}{r}$ 
and $x \mapsto \frac{|(x,x+r)\cap E|}{r}$
are obviously continuous for any $r$ and hence
 \begin{displaymath}
x \mapsto \max\left\{\frac{|(x-r,x)\cap E|}{r},\frac{|(x,x+r)\cap E|}{r}\right\}
\end{displaymath}
is also continuous, which immediately yields that each upper level set $E_r^{\gamma}$ is closed.
Consequently, their intersection $E^{\gamma,\delta}$ is also closed. \end{proof}

\begin{proposition}\label{*propudt}
UDT sets are strongly one-sided dense.
\end{proposition}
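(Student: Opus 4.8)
**Proof plan for Proposition \ref{*propudt}.**

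The plan is to unwind the definitions. Suppose $E$ is UDT, so there are sequences $\gamma_n\nearrow 1$ and $\delta_n\searrow 0$ with $E\subseteq\bigcap_{k=1}^\infty\bigcup_{n=k}^\infty E^{\gamma_n,\delta_n}$. Fix $x\in E$; I must show that for every sequence $[x-r_j,x+r_j]$ with $r_j\searrow 0$ we have $\max\{|E\cap[x-r_j,x]|/r_j,\;|E\cap[x,x+r_j]|/r_j\}\to 1$. Equivalently, given $\varepsilon>0$ I must produce $j_0$ such that for all $j\ge j_0$ this maximum exceeds $1-\varepsilon$.

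First I would choose $N$ so large that $\gamma_N>1-\varepsilon$. Since $x$ lies in $\bigcup_{n=N}^\infty E^{\gamma_n,\delta_n}$, there is some index $m\ge N$ with $x\in E^{\gamma_m,\delta_m}$; note $\gamma_m\ge\gamma_N>1-\varepsilon$. Now use the definition of $E^{\gamma_m,\delta_m}$ from Definition \ref{*defudt}: for \emph{every} $r\in(0,\delta_m]$ we have $\max\{|(x-r,x)\cap E|/r,\;|(x,x+r)\cap E|/r\}\ge\gamma_m>1-\varepsilon$. Then I pick $j_0$ with $r_{j}\le\delta_m$ for all $j\ge j_0$ (possible as $r_j\searrow 0$); for each such $j$, applying the displayed inequality with $r=r_j$ gives exactly what is needed, after noting that replacing the open intervals $(x-r_j,x)$ and $(x,x+r_j)$ by the closed intervals $[x-r_j,x]$ and $[x,x+r_j]$ only enlarges the measures. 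Hence the maximum in the definition of strong one-sided density exceeds $1-\varepsilon$ for all $j\ge j_0$, and since $\varepsilon$ was arbitrary the limit is $1$.

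This argument is essentially a matching of quantifiers, so there is no real obstacle; the only point that needs a moment's care is the order of quantifiers in the UDT definition — one must first fix $\varepsilon$ (hence $N$), then extract a \emph{single} good index $m\ge N$ from the $\limsup$-type membership, and only afterwards choose $j_0$ depending on $\delta_m$. If one tried to reverse this and fix the sequence $\{r_j\}$ first, the uniformity provided by "$\forall r\in(0,\delta_m]$" in the definition of $E^{\gamma_m,\delta_m}$ is precisely what saves the proof. I would write it as a short direct argument of a few lines.
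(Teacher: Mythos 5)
Your proof is correct and matches the paper's argument essentially step for step: both proofs fix $x\in E$ and a density threshold, use the UDT inclusion to extract a single index $m$ with $\gamma_m$ above the threshold and $x\in E^{\gamma_m,\delta_m}$, and then invoke the uniformity ``for all $r\in(0,\delta_m]$'' to conclude. The quantifier order you highlight is exactly the point the paper relies on as well.
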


\begin{remark}\label{*expropudt}
There are strongly one-sided dense sets which are not UDT, however the construction
of such sets is not that easy.  We write a short note, \cite{bhmvdensity} on this topic.
\end{remark}

\begin{proof}[ Proof of Proposition \ref{*propudt}]
\tr{Suppose $E$ is UDT. Then there exist sequences $\gamma_n\nearrow 1$ and $\delta_n\searrow 0$ such that $E\subseteq \bigcap_{k=1}^{\infty} \bigcup_{n=k}^{\infty}E^{\gamma_n,\delta_n}$. Let $x\in E$ and $\gamma<1$.} Choose $k$  such that $\gamma_{n}>\gamma$ when $n\geq k$. 
Then there exists $n(\gamma, x)\geq k$  such that $x\in E^{\gamma_{n(\gamma, x)},\delta_{n(\gamma, x)}}$,   that is  
$$\text{$\max\Big\{\frac{|(x-r,x)\cap E|}{r},\frac{|(x,x+r)\cap E|}{r}\Big\}>
\gamma_{n(\gamma, x)}>
\gamma$ holds for $0<r< \delta_{n(\gamma, x)}$.}$$ Since this is true  for any $0<\gamma<1$ we see that $E$ is strongly one-sided dense at $x$.
\end{proof}

\tr{The following notion is closely related to UDT.}

 \begin{definition}\label{*defsudt}
We say that $E$ has strong uniform density type (SUDT) if there exist sequences $\gamma_n\nearrow 1$ and $\delta_n\searrow 0$ such that $E\subseteq \bigcup_{k=1}^{\infty} \bigcap_{n=k}^{\infty}E^{\gamma_n,\delta_n}$.
\end{definition}

\tr{For the following proposition assume that all sets which occur in its statement are measurable subsets of $\mathbb{R}$.}

 \begin{proposition} \label{*prop06}

 \begin{enumerate}[(i)]
\item\label{*eni} If a set $E$ has SUDT then it also has UDT.
\item Any interval has SUDT (and hence UDT).
\item If $E_1, E_2, ...$ have UDT (resp.
SUDT) then $E= \bigcup_{m=1}^{\infty}E_m$ also has UDT (resp.
SUDT).
\item There exists $E$ which has SUDT but its closure $\overline{E}$
is not strongly one-sided dense and hence
 does not have UDT.
\end{enumerate}
\end{proposition}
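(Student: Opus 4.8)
The plan is to prove (i)--(iv) in this order, deducing (iv) from (ii) and (iii). Two elementary monotonicity facts about the sets $E^{\gamma,\delta}$ will be used throughout. First, for a fixed $E$ one has $E^{\gamma,\delta}\subseteq E^{\gamma',\delta'}$ whenever $\gamma'\le\gamma$ and $\delta'\le\delta$, because lowering $\gamma$ makes the one-sided density bound at each radius easier to satisfy, while shrinking $\delta$ tests fewer radii. Second, if $F\subseteq E$ then $F^{\gamma,\delta}\subseteq E^{\gamma,\delta}$, since $|(x-r,x)\cap F|\le|(x-r,x)\cap E|$ and similarly on the right. Part (i) is then just the set-theoretic inclusion $\bigcup_k\bigcap_{n\ge k}A_n\subseteq\bigcap_k\bigcup_{n\ge k}A_n$ applied to $A_n=E^{\gamma_n,\delta_n}$ (if $x$ lies in the left-hand side, fix $k_0$ with $x\in A_n$ for $n\ge k_0$; then $x\in A_{\max(k,k_0)}$ for every $k$), so the very sequences witnessing SUDT also witness UDT. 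For part (ii), let $I$ be a non-degenerate interval and take $\gamma_n=1-\tfrac1{n+1}$, $\delta_n=\tfrac1n$; given $x\in I$, let $\rho(x)>0$ be the distance from $x$ to the endpoint(s) of $I$ (read as $+\infty$ on an unbounded side). For $0<r\le\rho(x)$ at least one of $(x-r,x),(x,x+r)$ lies in $I$, forcing $x\in E^{\gamma,\delta}$ for all $\gamma\le1$ and all $\delta\le\rho(x)$. Hence $x\in\bigcap_{n\ge k}E^{\gamma_n,\delta_n}$ for $k\ge 1/\rho(x)$, so $I$ is SUDT, and UDT by (i).

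For part (iii), suppose each $E_m$ is UDT via $\gamma^{(m)}_n\nearrow1$, $\delta^{(m)}_n\searrow0$ (we may assume $0<\gamma^{(m)}_n<1$ for all $n$); by the second monotonicity fact $E_m\subseteq\bigcap_k\bigcup_{n\ge k}E^{\gamma^{(m)}_n,\delta^{(m)}_n}$ with $E=\bigcup_m E_m$. Put $q_m(s)=\min\{n\ge1:\gamma^{(m)}_n\ge1-\tfrac1s\}$, which is finite and tends to $\infty$ with $s$. I build the merged sequences $(\Gamma_j,\Delta_j)$ in finite stages $s=1,2,\dots$: at stage $s$, for each $m\le s$ append the pairs $(\gamma^{(m)}_n,\delta^{(m)}_n)$ with $q_m(s)\le n<q_m(s+1)$; over the stages $s\ge m$ this lists every pair $(\gamma^{(m)}_n,\delta^{(m)}_n)$ with $n\ge q_m(m)$, exactly once. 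To a slot $j$ created at stage $s$ assign $\Gamma_j=1-\tfrac1s$ (so $\Gamma_j\nearrow1$, and $\Gamma_j\le\gamma^{(m)}_n$ for the pair in slot $j$, since $n\ge q_m(s)$), and let $\Delta_j$ be the minimum of all raw $\delta$-entries used in slots $\le j$ (so $\Delta_j$ is non-increasing, does not exceed the raw $\delta$ at slot $j$, and tends to $0$ because every $\delta^{(1)}_n$ eventually appears and $\delta^{(1)}_n\to0$). If $x\in E_m$, then $x\in E^{\gamma^{(m)}_n,\delta^{(m)}_n}$ for infinitely many $n$, infinitely many of which satisfy $n\ge q_m(m)$ and hence occupy slots $j$ of the merged sequence; at any such slot $(\Gamma_j,\Delta_j)\le(\gamma^{(m)}_n,\delta^{(m)}_n)$ coordinatewise, so $x\in E^{\Gamma_j,\Delta_j}$ by the first monotonicity fact. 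Thus $E\subseteq\bigcap_k\bigcup_{j\ge k}E^{\Gamma_j,\Delta_j}$, so $E$ is UDT. The SUDT case needs no interleaving: if each $E_m$ is SUDT via $\gamma^{(m)}_n,\delta^{(m)}_n$, set $\Gamma_j=1-\tfrac1j$ and $\Delta_j=\min_{1\le m\le j}\delta^{(m)}_{q_m(j)}$; monotonicity of $q_m$ and of $\delta^{(m)}$ gives $\Gamma_j\nearrow1$ and $\Delta_j\searrow0$, and for $x\in E_m$, picking $k$ with $x\in E^{\gamma^{(m)}_n,\delta^{(m)}_n}$ for all $n\ge k$ and then $j\ge m$ with $q_m(j)\ge k$ yields $(\Gamma_j,\Delta_j)\le(\gamma^{(m)}_{q_m(j)},\delta^{(m)}_{q_m(j)})$, hence $x\in E^{\Gamma_j,\Delta_j}$ for all such $j$, which is SUDT for $E$.

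Part (iv) then follows at once: let $E=\bigcup_{k=1}^{\infty}[4^{-k},2\cdot4^{-k}]\subset(0,\infty)$, a countable union of non-degenerate intervals, hence SUDT by (ii) and (iii). These intervals are pairwise disjoint and separated and accumulate only at $0$, so $\overline{E}=\{0\}\cup E$; therefore $\overline{E}\cap[-r,0]=\{0\}$ for all $r>0$, while $|\overline{E}\cap(0,4^{-k}]|=\sum_{j>k}4^{-j}=\tfrac13\,4^{-k}$. Thus $\max\{|\overline{E}\cap[-r,0]|/r,\ |\overline{E}\cap[0,r]|/r\}=\tfrac13$ for $r=4^{-k}$, so $\overline{E}$ is not strongly one-sided dense at $0\in\overline{E}$, and hence $\overline{E}$ is not UDT by Proposition \ref{*propudt}.

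The only delicate point is the UDT half of (iii): a single pair of monotone sequences $\gamma_n\nearrow1$, $\delta_n\searrow0$ must simultaneously witness UDT for countably many sets whose witnessing sequences are uncoordinated, and naive concatenation destroys the required monotonicity. The staged interleaving (so that each $\gamma$-block can be flattened down to the single value $1-\tfrac1s$) together with taking $\Delta_j$ to be a running minimum is exactly what overcomes this; everything else is routine.
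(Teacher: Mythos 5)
Your proof is correct, and the overall architecture matches the paper's: parts (i) and (ii) are the same elementary observations (the paper calls them ``obvious''), part (iv) uses an example of the same type (a union of non-degenerate intervals accumulating only at $0$, giving one-sided density $1/3$ there; the paper uses $\bigcup_{n\in\mathbb Z}[2^n-2^{n-2},2^n]$, you use $\bigcup_{k\ge1}[4^{-k},2\cdot4^{-k}]$), and you invoke Proposition \ref{*propudt} exactly as the paper does. The one place you genuinely diverge is part (iii). The paper outsources the work to ``a straightforward diagonalization argument'' that produces a single pair $(\gamma_n),(\delta_n)$ so that for each $m$ there is a threshold $n_m$ with $\gamma_n<\gamma_{m,n}$ and $\delta_n<\delta_{m,n}$ for all $n>n_m$; this gives the index-by-index inclusion $E^{\gamma_{m,n},\delta_{m,n}}\subseteq E^{\gamma_n,\delta_n}$ on the tail, from which the tail unions and intersections are nested and the conclusion follows in one line. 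You instead bypass index-by-index domination entirely: you interleave the raw pairs $(\gamma^{(m)}_n,\delta^{(m)}_n)$ into slots, flatten the $\gamma$'s to stage-constant values $1-\tfrac1s$, take running minima for the $\delta$'s, and track an injection from raw pairs to slots to transfer membership. Both routes are correct. The paper's is shorter once one accepts the diagonalization (and it proves the inclusion of the entire tail at once, which is why it handles UDT and SUDT with literally the same sentence); yours is longer but has the virtue of being fully explicit, and your observation that ``naive concatenation destroys the required monotonicity'' is precisely the issue the paper's diagonalization is quietly designed to avoid. One small point to tighten: your assumption ``we may assume $0<\gamma^{(m)}_n<1$'' is used to guarantee $q_m(s)\to\infty$; it is harmless (replace $\gamma^{(m)}_n$ by $\min\{\gamma^{(m)}_n,\,1-\tfrac{1}{n+1}\}$, which only enlarges the sets $E^{\gamma,\delta}$) but deserves the one-line justification.
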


 \begin{proof}
\tr{Statements}
(i) and (ii) are obvious.

In (iii) we will examine the UDT case, the proof of the SUDT case is basically the same.
\tr{For each set $E_m$ choose a pair of sequences $(\gamma_{m,n})_{n=1}^\infty, (\delta_{m,n})_{n=1}^\infty$ such that $\gamma_{m,n}\nearrow 1$ and $\delta_{m,n}\searrow 0$ such that $E_m\subseteq \bigcap_{k=1}^{\infty} \bigcup_{n=k}^{\infty}E^{\gamma_{m,n},\delta_{m,n}}$.}
\tr{ Then a straighforward diagonalization argument shows that we can choose} sequences $(\gamma_n)_{n=1}^\infty$ and $(\delta_n)_{n=1}^\infty$ such that $\gamma_n\nearrow 1$ and $\delta_n\searrow 0$ and for every $m\in {\ensuremath {\mathbb N}}$ there is an $n_m\in {\ensuremath {\mathbb N}}$ for which
\begin{equation}\label{*brubru}
\text{  for all $n>n_m$ we have $0<\delta_n<\delta_{m,n  }$ and $\gamma_n<\gamma_{m,n}<1$.}
\end{equation}

Thus for every $m\in {\ensuremath {\mathbb N}}$ and $n>n_m$ we have that $E^{\gamma_{m,n},\delta_{m,n}}\subseteq E^{\gamma_n,\delta_n}$, hence
 \begin{displaymath}
E_m\subseteq  \bigcap_{k=1}^{\infty} \bigcup_{n=k}^{\infty}E^{\gamma_{m,n},\delta_{m,n}}
=  \bigcap_{k=n_m}^{\infty} \bigcup_{n=k}^{\infty}E^{\gamma_{m,n},\delta_{m,n}}
\subseteq  \bigcap_{k=n_m}^{\infty} \bigcup_{n=k}^{\infty}E^{\gamma_n,\delta_n}
=  \bigcap_{k=1}^{\infty} \bigcup_{n=k}^{\infty}E^{\gamma_n,\delta_n}.
\end{displaymath}
This implies
 \begin{displaymath}
E\subseteq  \bigcap_{k=1}^{\infty} \bigcup_{n=k}^{\infty}E^{\gamma_n,\delta_n},
\end{displaymath}
that is $E$ has UDT.

Finally, for (iv) consider
 \begin{displaymath}
E= \bigcup_{n=-\infty}^{\infty}\left[2^n-2^{n-2},2^n\right].
\end{displaymath}
By (iii), ${E}$ has SUDT as it is a countable union of intervals.
\tr{Note that} its closure is $\overline{E}=E\cup\{0\}$.
But in intervals of the form $\left(0,2^n-2^{n-2}\right)$, $n\in  {\ensuremath {\mathbb Z}}$ the set $\overline{E}$ has density
 \begin{displaymath}
\frac{1}{2^n-2^{n-2}}\sum_{k=-\infty}^{n-1}2^{k-2}=\frac{2^{n-2}}{2^n-2^{n-2}}=\frac{1}{3},
\end{displaymath}
and for any interval of the form $(-r,0)$ for $r>0$ the set $\overline{E}$ has density 0. Consequently,  $E$ is not strongly-one sided dense at $0$ and therefore not UDT.  
\end{proof}

\tr{The following theorem is the main result of this paper.}

 \begin{theorem}
\label{*thUDTLip1}
 Assume that $E$ is $G_\delta$ and $E$ has UDT. Then there exists a continuous function $f$ satisfying $ \Lip f =\mathbf{1}_E$, that is, the set $E$ is $ \Lip 1$.
\end{theorem}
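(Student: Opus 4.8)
The plan is to reduce everything to Theorem~\ref{Lip1thm} and to construct a ternary decomposition $E\sim(E_1,E_0,E_{-1})$ of $\mathbb{R}$ with respect to $E$: disjoint measurable sets covering $\mathbb{R}$ for which \eqref{E1 or E-1} holds (near each point of $E$ one of $E_1,E_{-1}$ is weakly dense) and \eqref{E1-E-1} holds (near each point of $E^c$, $E_1$ and $E_{-1}$ cancel in every shrinking interval). One should think of $E_1\sqcup E_{-1}$ as all of $E$ up to a null set, carrying a $\pm1$-valued ``sign field''; then \eqref{E1-E-1} is a cancellation condition on this sign field, while \eqref{E1 or E-1} asks that near each $x\in E$ there be arbitrarily fine one-sided windows on which all of $E$ carries a single sign and has density close to $1$. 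After a routine localisation I may assume $E$ is bounded; writing $E=\bigcap_k U_k$ with the $U_k$ open and decreasing and deleting from each $U_k$ every component disjoint from $E$, I may further assume every component of every $U_k$ meets $E$, with $\bigcap_k U_k=E$ still.

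The sign field is built by a multiscale recursion over $k$. At stage $k$ I look at the components of $U_k$; each sits inside a unique parent component of $U_{k-1}$, and within each parent I distribute $+$ and $-$ among the children by a balancing (discrepancy) rule, so that partial sums of the signed lengths, read spatially, stay $o$ of the local scale. Iterated over all levels this makes the sign field cancel to order $o(|I|)$ over every interval $I$ that remains inside a fixed component of some $U_k$ --- which is tailored to \eqref{E1-E-1}. Overriding this default, at stage $n$ I consult the \emph{closed} set $E^{\gamma_n,\delta_n}$ of Definition~\ref{*defudt} (closed by Lemma~\ref{closed}): near a carefully chosen subfamily of its points $E$ is one-sided dense at \emph{every} scale $\le\delta_n$, and there I force all of $E$ to carry a single sign on a one-sided window of length about $\delta_n$, paying for the imbalance in the much larger part of the ambient level-$n$ component lying away from $E^{\gamma_n,\delta_n}$. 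Reading off the eventually stable limit of the stage assignments yields disjoint $E_1,E_{-1}\subseteq E$ with $E=E_1\sqcup E_{-1}$ up to measure zero, and we set $E_0=\mathbb{R}\setminus E$; the partial primitives converge uniformly, so this limit is legitimate.

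By Theorem~\ref{Lip1thm} it now suffices to check the two conditions of Definition~\ref{ternary}. For \eqref{E1-E-1}: if $x\notin E$, let $k$ be least with $x\notin U_k$; a neighbourhood of $x$ stays inside the level-$(k-1)$ component through $x$ and meets $E$ only in level-$k$ children of that parent, which were sign-balanced, so for any $I_n\to x$ the excess $\big||E_1\cap I_n|-|E_{-1}\cap I_n|\big|$ comes only from the balanced interior (of size $o(|I_n|)$) and the at most two children straddling the endpoints of $I_n$, hence is $o(|I_n|)$. For \eqref{E1 or E-1}: if $x\in E$ then by the UDT hypothesis $x\in E^{\gamma_n,\delta_n}$ for infinitely many $n$ with $\gamma_n\to1$; for each such $n$ one side of $x$ carries, at arbitrarily small scales $r<\delta_n$, at least $\gamma_n r$ of the measure of $E$, all of which has a single sign, say $+$; taking $I_n=[x,x+r_n]\to x$ along such scales gives $|E_1\cap I_n|/|I_n|\ge\gamma_n\to1$, so $E_1$ (or $E_{-1}$ in the other case) is weakly dense at $x$. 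Finally every $x\in E$ lies in some $E^{\gamma_n,\delta_n}$, so it has been assigned a sign and indeed $E=E_1\sqcup E_{-1}$ up to a null set; hence $E\sim(E_1,E_0,E_{-1})$ and $E$ is $\Lip 1$.

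I expect the main obstacle to be the joint bookkeeping inside the recursion: a sign forced on a coherent window attached to $E^{\gamma_n,\delta_n}$ must not destroy the balance required by the (typically uncountably many) points leaving $E$ at deeper levels, nor the coherent windows that deeper stages need around other points of $E$; choosing the window subfamilies, nesting them compatibly with the component hierarchy, resolving conflicts between overlapping windows, and bounding the residual imbalance by the free part of the relevant component form the technical heart of the argument. This is also where the full strength of UDT is used: because the density bound defining $E^{\gamma_n,\delta_n}$ holds for \emph{all} scales below $\delta_n$ --- not merely along a single sequence, as in the weaker notions of Section~\ref{*seceasy} --- the part of each level-$n$ component not pinned down by a coherent window remains large in measure, which is precisely what leaves room to restore balance.
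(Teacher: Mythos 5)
Your plan takes a genuinely different route from the paper: rather than building the function $f$ directly, you reduce to Theorem~\ref{Lip1thm} and try to construct a ternary decomposition $E\sim(E_1,E_0,E_{-1})$ by distributing a $\pm 1$ sign field over $E$. This is conceptually natural --- if $f$ is the function the paper builds, then $E_1,E_{-1}$ are essentially $\{f'=1\}$ and $\{f'=-1\}$ --- but the reduction does not actually shrink the problem: what you call the ``technical heart'' (nesting the coherent windows compatibly with the component hierarchy, resolving overlaps, and keeping the residual imbalance $o(|I|)$ at every $x\notin E$) is precisely the entire content of the theorem, and you explicitly acknowledge you have not carried it out. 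In the paper that bookkeeping is done by Lemma~\ref{envelope1} (producing the locally piecewise monotone approximant $g=K\pm(1-\delta)\phi$, i.e.\ the sign field at a given level, with the turning points chosen by the balancing identity~\eqref{*trs*}), Lemma~\ref{envelope2} (flattening on $F_n$ while staying inside the envelope, i.e.\ restoring balance near points leaving $E$), and the vicinity system $(U_n)$ with radii $r_n\le\min\{2^{-n},d(\cdot,F_n)^2\}$ that propagates both the cancellation estimate and the increment lower bound~\eqref{*eqfxnyn} to the limit. Your sketch offers no substitute for this machinery, so I regard the proposal as a plausible outline rather than a proof.

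Beyond the missing construction, two of the verifications you do offer are unjustified as stated. For~\eqref{E1-E-1} you argue that the excess over $I_n$ comes from ``the balanced interior (of size $o(|I_n|)$) and the at most two children straddling the endpoints of $I_n$, hence is $o(|I_n|)$.'' But if $x\in F_k$ is an endpoint of a component $C$ of $U_k$ (which happens for a generic boundary point), then $I_n\cap E$ lives entirely inside the single child $C$, and you are estimating the imbalance over a subinterval of $C$; the ``two straddling children'' phrasing hides that you must recurse all the way down inside $C$, and that the single-sign windows you forced at deeper levels near the (closed, and possibly accumulating at $x$) sets $E^{\gamma_m,\delta_m}\cap C$ can occupy a fixed positive fraction of $I_n$ --- exactly the conflict you flag but do not resolve. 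Likewise, the claim that ``the part of each level-$n$ component not pinned down by a coherent window remains large in measure'' is asserted but does not follow from the definition of $E^{\gamma_n,\delta_n}$; UDT controls the density of $E$ near its own points at all small scales, not the measure of the complement of the windows inside a component of $G_n$. So the proposal has a genuine gap: the route is sensible (and in spirit dual to the paper's), but the multiscale balancing that both~\eqref{E1 or E-1} and~\eqref{E1-E-1} rest on is not supplied.
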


In order to prove the theorem we will need a pair of  definitions and a couple of technical lemmas:

\tr{ \begin{definition}\label{*defvic}
By a vicinity $U$ of a function $f\colon \mathbb{R}\rightarrow\mathbb{R}$ we mean a set of functions of the following form:
 \begin{displaymath}
U=\{g: \forall x \text{    } |f(x)-g(x)|\leq r(x)\},
\end{displaymath}
where $r(x)$ is a fixed, continuous, nonnegative function, called the radius of $U$.
\end{definition}}

 \begin{definition}\label{envelope}
Suppose that $f$ is continuous on the interval $[a,b]$ and $f_l,f_u$ are continuous on $[a,b]$ with $f_l < f < f_u$ on $(a,b)$ and $f_l(a)=f_u(a)=f(a)$ and $f_l(b)=f_u(b)=f(b)$.  Then we say that $(f_l,f_u)$ is an {\em  envelope} for $f$ on $[a,b]$ and we write $f \in (f_l,f_u)$ on $[a,b]$.
\end{definition}

For each of the following lemmas we assume that $E$ is as in the statement of Theorem \ref{*thUDTLip1} and that $\phi(x)=\int_0^x \mathbf{1}_E(t) \, dt.$
 Observe that $\phi(y)-\phi(x) = |[x,y]\cap E|$ for $x\le y$.  

 \begin{lemma}\label{envelope1}
Assume that $f$ is continuous and monotone on $[a,b]$ and $f\in (f_l,f_u)$ on $[a,b]$.  Furthermore, let $0 < \delta <  \epsilon \le 1$ and assume that
 \begin{equation}\label{1-epsilonphi}
|f(x)-f(y)|\le (1- \epsilon)|\phi(x)-\phi(y)| \text{    for all    }x,y \in [a,b].
\end{equation}
Then, there exists a continuous function $g$ on $[a,b]$ such that
\begin{itemize}
\item $g \in (f_l,f_u)$ on $[a,b]$,
\item $g$ is locally piecewise monotone on $(a,b)$, that is any compact subinterval  of $(a,b)$  can be divided into finitely many subintervals on each of which $g$ is monotone,
\item On any interval of monotonicity of $g$
there exists a constant $K$ depending only on the interval such that
 $g=K\pm (1-\delta)\phi$.
\end{itemize}
\end{lemma}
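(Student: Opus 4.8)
The plan is to obtain $g$ as a tight ``zig-zag'' modification of $f$ each of whose monotone pieces has the form $K\pm(1-\delta)\phi$. After possibly replacing $(f,f_l,f_u)$ by $(-f,-f_u,-f_l)$, I may assume $f$ is non-decreasing. As in the proof of Lemma~\ref{novekedes}, hypothesis \eqref{1-epsilonphi} forces $f$ to be $(1-\epsilon)$-Lipschitz and absolutely continuous, with $0\le f'\le(1-\epsilon)\mathbf{1}_E$ a.e.\ (in particular $f'=0$ a.e.\ on $E^{c}$), while $\phi$ is non-decreasing and absolutely continuous with $\phi'=\mathbf{1}_E$ a.e.\ and $\phi(y)-\phi(x)=|[x,y]\cap E|$ for $x\le y$. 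The sole role of $\delta<\epsilon$ is the strict estimate $0\le f(y)-f(x)\le(1-\epsilon)(\phi(y)-\phi(x))<(1-\delta)(\phi(y)-\phi(x))$ for $x<y$, i.e.\ $f$ moves strictly slower than $(1-\delta)\phi$.

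First I fix a continuous ``amplitude'' $\rho\colon(a,b)\to(0,\infty)$ with $\rho(x)<\min\{f_u(x)-f(x),\,f(x)-f_l(x)\}$ on $(a,b)$ and $\rho(x)\to0$ as $x\to a^{+}$ and as $x\to b^{-}$ (e.g.\ half of that minimum; it vanishes at the endpoints because $f_u(a)=f_l(a)=f(a)$ and $f_u(b)=f_l(b)=f(b)$). Next, fixing some $x_0\in(a,b)$, put $c_0=x_0$ and define turning points recursively by
\[
c_j=\sup\Big\{\,t\in(c_{j-1},b)\ :\ (1-\delta)\bigl(\phi(t)-\phi(c_{j-1})\bigr)\le\min\nolimits_{[c_{j-1},t]}\rho\,\Big\}\qquad(j\ge1),
\]
together with a symmetric sequence $c_{-1}>c_{-2}>\cdots$ on $(a,x_0]$. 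The defining set is non-empty (it contains a right-neighbourhood of $c_{j-1}$, since $\phi$ is continuous and $\rho$ is continuous and positive), and I claim $c_j\nearrow b$. Indeed, if $|E\cap(c_{j-1},b)|=0$ the defining set is all of $(c_{j-1},b)$; then I set $c_j=b$ and $g=f$ on $[c_{j-1},b]$, which is admissible because $\mathbf{1}_E=0$ a.e.\ there, so $f$ is constant and $g=f$ has the required form. Otherwise, if the increasing sequence $(c_j)$ had a limit $c^{*}<b$, then continuity of $\phi$ and $\rho$ shows that, for a sufficiently small fixed $s>0$, the point $c^{*}+s$ lies in the defining set of $c_j$ for all large $j$, forcing $c_j\ge c^{*}+s$ — a contradiction.

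On each block $[c_{j-1},c_j]$ with $\phi(c_j)>\phi(c_{j-1})$ I let $g$ rise from $f(c_{j-1})$ with slope $(1-\delta)\mathbf{1}_E$ up to an apex at a point $x_j^{*}$ and then fall with slope $-(1-\delta)\mathbf{1}_E$ down to $c_j$: thus $g=f(c_{j-1})+(1-\delta)(\phi-\phi(c_{j-1}))$ on $[c_{j-1},x_j^{*}]$ and $g=g(x_j^{*})-(1-\delta)(\phi-\phi(x_j^{*}))$ on $[x_j^{*},c_j]$, where $x_j^{*}$ is furnished by the intermediate value theorem for $\phi$ so that $\phi(x_j^{*})=\tfrac12\bigl(\phi(c_{j-1})+\phi(c_j)\bigr)+\tfrac{f(c_j)-f(c_{j-1})}{2(1-\delta)}$; this value lies in $[\phi(c_{j-1}),\phi(c_j)]$ precisely because $f(c_j)-f(c_{j-1})<(1-\delta)(\phi(c_j)-\phi(c_{j-1}))$, and it makes $g(c_j)=f(c_j)$. (The two types of leftward block are treated identically.) On each of the two pieces $g$ has the announced form $K\pm(1-\delta)\phi$, hence is monotone, and the turning points $\{c_j\}\cup\{x_j^{*}\}$ accumulate only at $a$ and $b$, so $g$ is locally piecewise monotone on $(a,b)$. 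Writing $\eta=g-f$, on an up-arc $\eta'=(1-\delta)\mathbf{1}_E-f'\ge0$ a.e.\ and on a down-arc $\eta'\le0$ a.e.\ (using $0\le f'\le(1-\epsilon)\mathbf{1}_E<(1-\delta)\mathbf{1}_E$), so on $[c_{j-1},c_j]$ we get $0\le\eta\le\eta(x_j^{*})\le(1-\delta)(\phi(c_j)-\phi(c_{j-1}))\le\min_{[c_{j-1},c_j]}\rho$, whence $|\eta(x)|\le\rho(x)$ for every $x\in(a,b)$. As $\rho<\min\{f_u-f,\,f-f_l\}$ this gives $f_l<g<f_u$ on $(a,b)$, and as $\rho\to0$ at $a$ and $b$ it gives $\eta(x)\to0$ as $x\to a^{+}$ and as $x\to b^{-}$; setting $g(a)=f(a)$ and $g(b)=f(b)$ then produces a continuous $g$ on $[a,b]$ with $g\in(f_l,f_u)$, as required.

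The main difficulty lies in the last two paragraphs: the turning points must be pushed all the way out to $a$ and to $b$, so that the monotone decomposition is only \emph{locally} finite — near an endpoint where $E$ is sparse and the tube pinches quickly the zig-zag may be forced to make infinitely many, progressively finer turns — yet the excursion $\eta$ of $g$ away from $f$ has to remain under the pinching envelope $\rho$. Both are secured by the slack $\epsilon-\delta$: this is exactly what keeps $\phi(x_j^{*})$ strictly inside $[\phi(c_{j-1}),\phi(c_j)]$, so that after every upward excursion there is still enough ``$\phi$-budget'' left to steer $g$ back down onto $f$ by the following turning point.
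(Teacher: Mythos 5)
Your proof is correct, and at its core it is the same zig-zag construction the paper uses: on each block you send $g$ up and then down with slope $\pm(1-\delta)\mathbf{1}_E$, locating the apex by the intermediate value theorem so that $g$ rejoins $f$ at the block's right endpoint --- your formula $\phi(x_j^*)=\tfrac12(\phi(c_{j-1})+\phi(c_j))+\tfrac{f(c_j)-f(c_{j-1})}{2(1-\delta)}$ is exactly the paper's choice of the interior node $c_{2i-1}$ in \eqref{ci eq}, and the slack $\epsilon-\delta$ is what keeps the apex strictly inside the block. Where you genuinely differ is in the bookkeeping. The paper reduces to a compact $[c,d]\subset(a,b)$, divides it into $n$ \emph{equal} pieces with $n$ chosen so large that the maximal excursion, which is at most $(1-\delta)(d-c)/n$, is dominated by $\inf_{[c,d]}\min\{f_u-f,\,f-f_l\}$ (using a modulus-of-continuity estimate for $f_u,f_l$), and then implicitly glues across a locally finite cover of $(a,b)$. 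You instead fix a continuous pinching amplitude $\rho<\min\{f_u-f,\,f-f_l\}$ vanishing at $a$ and $b$ and choose variable-length blocks adaptively, each one consuming only as much $\phi$-mass as $\min\rho$ allows, so that $\eta=g-f$ is controlled by $\rho$ pointwise; the price is that you must prove the turning points accumulate only at $a$ and $b$, which you do. Your route avoids the explicit compact exhaustion and the modulus-of-continuity step; the one point you pass over quickly is that the supremum defining $c_j$ is actually attained (so that $(1-\delta)(\phi(c_j)-\phi(c_{j-1}))\le\min_{[c_{j-1},c_j]}\rho$ really holds), but this follows at once from the continuity of $t\mapsto (1-\delta)(\phi(t)-\phi(c_{j-1}))-\min_{[c_{j-1},t]}\rho$, which makes the defining set a relatively closed interval. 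Both routes are sound and of comparable length.
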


 \begin{proof}
We first note the following useful fact, which follows from the inequalities $0 <\delta <  \epsilon$ and inequality (\ref{1-epsilonphi}):

 Given any interval $[r,s] \subset (a,b)$ we can choose $t\in (r,s)$ such that
  \begin{equation}\label{*trs*}
 (1-\delta)(|E \cap [r,t]|-|E \cap [t,s]|)=f(s)-f(r).
 \end{equation}
 
 Next, we note that in order to prove the lemma it suffices to prove that for any subinterval $[c,d]\subset (a,b)$ we can construct a continuous function $g$ on $[c,d]$ such that
  \begin{enumerate}[(i)]
 \item $f_l(x) < g(x) < f_u(x)$ on $[c,d]$,
\item $g(c)=f(c)$ and $g(d)=f(d)$,
 \item $g$ is piecewise monotone on $[c,d]$,
\item $g=K\pm  (1-\delta) \phi$ on each interval of monotonicity of $g$,
recall that we use constants
$K$ which depend on the interval considered.
  \end{enumerate}

 Assume that $[c,d] \subset (a,b)$.  Let
 $$\gamma=\inf_{c\le x \le d}\min\{(f_u(x)-f(x)),(f(x)-f_{l}(x))\}>0.$$ Using the uniform continuity of $f_{u}$ and $f_{l}$ on $[c,d]$ choose a positive integer $n$ such that
 \begin{equation}\label{1-delta ineq}
\begin{gathered}\frac{d-c}n<\frac{\gamma} 3 \text{  and for  }x,y\in [c,d],\  |x-y|<\frac{d-c}{n}  \text{  we have  } \\
\max \{|f_{u}(x)-f_{u}(y)|, |f_{l}(x)-f_{l}(y)|\}<\frac{\gamma}{3}.
\end{gathered}
\end{equation}

For $i=0,1,2,\ldots, n$ let $c_{2i}=c+i(\frac{d-c}n)$ so we have $c=c_0<c_2<c_4<\ldots<c_{2n}=d$.  Using \eqref{*trs*} for each $i=1,2,\ldots, n$ we choose $c_{2i-1} \in (c_{2i-2},c_{2i})$ such that
 \begin{equation}\label{ci eq}
(1-\delta)(|E \cap [c_{2i-2},c_{2i-1}]|-|E \cap [c_{2i-1},c_{2i}]|)=f(c_{2i})-f(c_{2i-2}).
\end{equation}
Next, for each $j=0,1,2,\ldots,2n-1$ we define $g$ in $[c_j,c_{j+1}]$ by
\tr{$$
g(x)=\begin{cases}
(1-\delta)(\phi(x)-\phi(c_j))+f(c_j) \text{    if    } j \text{    is even    }\\
-(1-\delta)(\phi(c_{j+1})-\phi(x))+f(c_{j+1}) \text{    if    } j \text{    is odd.    }
\end{cases}$$}
We see that for $j=0,1,2,\ldots, n-1$ we have $g$ is monotone increasing on $[c_{2j},c_{2j+1}]$  and monotone decreasing on  $[c_{2j+1},c_{2j+2}]$.  Furthermore,
  $g=K_i\pm (1-\delta)\phi$ on each interval $[c_i,c_{i+1}]$ for $i=0,1,2,\ldots, 2n-1$ with suitable constants $K_{i}$.    We also see that (ii) holds and (i) follows from inequality \eqref{1-delta ineq}.  This concludes the proof of the lemma.
\end{proof}

 \begin{lemma}\label{envelope2}
Suppose that $f$ is continuous on $[a,b]$, $f\in (f_l,f_u)$ on $[a,b]$, and
$H$ is a closed set such that $H \subset (a,b)\backslash E$.  Furthermore, assume that $0 < \delta <  \epsilon \le 1$ and
(\ref{1-epsilonphi}) holds.
Then there exists a function $g$ continuous on $[a,b]$ with
 \begin{enumerate}[(i)]
\item $g\in (f_l,f_u)$ on $[a,b]$,
\item $g(a)=f(a)$, $g(b)=f(b)$,
\item  $g'=0$ on $H$,
\item $|g(x)-g(y)|\le (1-\delta)|\phi(x)-\phi(y)|$ for all $x,y \in [a,b]$.
 \end{enumerate}
\end{lemma}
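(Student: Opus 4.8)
The plan is to replace $f$ by a function $g$ which is \emph{locally constant on a tightly chosen open neighbourhood $U\supseteq H$} — so that conclusion (iii) is immediate — and which interpolates between the constant pieces inside the gaps of $U$, exploiting the slack $1-\epsilon<1-\delta$ to secure (iv). If $H=\emptyset$ take $g=f$; if $\epsilon=1$ then \eqref{1-epsilonphi} forces $f$ constant and again $g=f$ works; so assume $H\neq\emptyset$ and $\epsilon<1$. Keeping $g=f$ on a neighbourhood of $a$ and of $b$ makes (ii) automatic, so I may work on a compact subinterval of $(a,b)$.

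Since $H$ is compact and $H\cap E=\emptyset$ we have $|H\cap E|=0$, so $|U\cap E|\to0$ as $U\searrow H$. I would fix an open $U=\bigsqcup_i V_i\supseteq H$ (the $V_i$ disjoint open intervals) with: (a) each $V_i$ short enough that $\operatorname{osc}_{V_i}f_l$ and $\operatorname{osc}_{V_i}f_u$ are small compared with $\min_{\overline{V_i}}(f_u-f_l)>0$ — using uniform continuity of $f_l,f_u$, and the fact that on each component of $\operatorname{int}H$ the function $\phi$, hence $f$ by \eqref{1-epsilonphi}, is constant, so there only a short collar must be controlled; and (b) $|U\cap E|$ so small that for every component $W$ of $[a,b]\setminus U$ with $|W\cap E|>0$, the $E$-mass of the two components of $U$ abutting $W$ is at most $\tfrac{\epsilon-\delta}{1-\epsilon}\,|W\cap E|$.

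On each $V_i$ put $g\equiv c_i$, where $c_i\in(\sup_{V_i}f_l,\ \inf_{V_i}f_u)$ is chosen close to the values of $f$ on $V_i$ — admissible by (a). On each component $W=(\alpha,\beta)$ of $[a,b]\setminus U$ — a subinterval of $[a,b]\setminus H$ whose two endpoints already carry the values $c_i,c_{i+1}$ (or $f(a),f(b)$ at the extreme gaps) — define $g$ to pass from $c_i$ to $c_{i+1}$: constant on a short collar near each endpoint, tracking $f$ closely in the interior so as to stay in $(f_l,f_u)$ there, and realizing the net displacement by a zigzag of the form $K\pm(1-\delta)\phi$ just as in Lemma~\ref{envelope1}. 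The estimate $\operatorname{osc}_{\overline W}f\le(1-\epsilon)|W\cap E|$ from \eqref{1-epsilonphi} together with (b) yields $|c_{i+1}-c_i|\le(1-\delta)|W\cap E|$ with room left for the zigzag; when $|W\cap E|=0$ one simply propagates $c_i=c_{i+1}$, which is legitimate since $f$ is then constant across $\overline W$.

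Finally I would verify the four conclusions. Continuity of $g$ is clear on $U$ and on each $W$, and at a point $h\in H$ because the constants $c_i$ of the $V_i$ converging to $h$ converge to the value assigned at $h$. Conclusion (ii) is built in. For (iii): every point of $H$ lies in some \emph{open} $V_i$ on which $g$ is constant, hence $g'=0$ there. For (iv): $|g(x)-g(y)|\le(1-\delta)|\phi(x)-\phi(y)|$ holds on every $V_i$ and every $W$ by construction, and for arbitrary $x<y$ one telescopes the increments over the $V_i$ and $W$ meeting $[x,y]$, obtaining $(1-\delta)|[x,y]\cap E|=(1-\delta)|\phi(x)-\phi(y)|$. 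The crux is the joint construction in (a)–(b): one must simultaneously keep each $c_i$ inside a possibly narrow envelope, make the jumps $|c_{i+1}-c_i|$ payable from the $(1-\delta)$-$\phi$-budget in the intervening gap, and still route $g$ through the envelope across that gap while $f$ may oscillate there. These are reconciled by making $U$ tight enough that $|U\cap E|$ is negligible against the $E$-mass of the gaps, together with $\epsilon-\delta>0$ and the $\phi$-Lipschitz control of $f$ inherited from \eqref{1-epsilonphi}.
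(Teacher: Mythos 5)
Your overall strategy matches the paper's in spirit: make $g$ constant near $H$ to get $g'=0$ there, realize the displacement on $H$-free intervals by pieces of the form $K\pm(1-\delta)\phi$ (as in Lemma~\ref{envelope1}), and use the slack $\epsilon-\delta>0$ to pay for the $E$-mass you waste while doing so. But the technical realization is genuinely different, and the difference is where your argument has a gap.

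The paper does not build an open neighbourhood of $H$. Instead, after decomposing $(a,b)$ into closed subintervals $[c,d]$ respecting the envelope, it selects \emph{finitely many} complementary intervals $I_1,\dots,I_n$ of $H\cap[c,d]$ whose $E$-mass is close enough to $|E\cap[c,d]|$ that $(1-\delta)\bigl|E\cap\bigcup I_i\bigr|>(1-\epsilon)|E\cap[c,d]|\ge f(d)-f(c)$, and it makes $g$ \emph{identically constant} on the closed set $[c,d]\setminus\bigcup I_i$ (which contains $H\cap[c,d]$). The finiteness is the crucial simplification: by continuity of measure one can always grab finitely many gaps carrying most of the $E$-mass, and there are no infinitely many constraints to satisfy.

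Your route instead requires an \emph{open} $U\supseteq H$ whose components $V_i$ all obey the ratio condition (b): for every component $W$ of $[a,b]\setminus U$ with $|W\cap E|>0$, the two adjacent $V_i$'s must carry $E$-mass at most $\frac{\epsilon-\delta}{1-\epsilon}|W\cap E|$. You flag this as "the crux," and rightly so — but you neither construct such a $U$ nor argue that one exists, and it is not obvious that one does. When $H$ has infinitely many complementary intervals (a Cantor-type $H$), any open $U\supseteq H$ has components accumulating at limit points $h\in H$; those components must then absorb entire small gaps of $H$ near $h$, including whatever $E$-mass they carry, and at the same time each component $V_i$ is subject to a constraint from each of the (up to two) gaps $W$ it abuts, with the bound scaled by $|W\cap E|$, which may itself be arbitrarily small. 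Reconciling infinitely many such coupled constraints with the requirement that $U$ be open and contain $H$ is precisely the delicate point that the paper's finite-selection device sidesteps. As written, your condition (b) is an unsupported assertion; the proof is incomplete until it is either proved (likely by a careful inductive choice of collars around the gaps of $H$, ordered by $E$-mass) or replaced by the paper's finiteness trick.

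Two smaller remarks: first, your statement that "on each component of $\operatorname{int}H$ the function $\phi$ is constant" is correct since $H\cap E=\emptyset$, but $\operatorname{int}H$ may well be empty (Cantor-type $H$), so this helps less than it seems. Second, in the case $|W\cap E|=0$ you propagate $c_i=c_{i+1}$; this is fine for (iv) but you still need to check that a single constant can sit strictly between $f_l$ and $f_u$ across $V_i\cup\overline W\cup V_{i+1}$, which your condition (a) bounds only on each $V_i$ separately — a minor but real extra check that the paper's decomposition into envelope-respecting $[c,d]$'s handles automatically.
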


 \begin{proof}

Write $(a,b)$ as a countable union of non-overlapping closed intervals $[c,d]$ which satisfy
 \begin{equation}\label{flcdfu}
f_l(x)<\min\{f(c),f(d)\}\le \max\{f(c),f(d)\} < f_u(x) \text{    for all    }x \in [c,d].
\end{equation}
Assume that $[c,d]$ is a closed subinterval of $(a,b)$ satisfying (\ref{flcdfu}).  
It suffices to show that we can define $g$ on $[c,d]$ such that
 \begin{equation}\label{gequalsf}
g(c)=f(c) \text{    and    }g(d)=f(d),
\end{equation}
 \begin{equation}\label{gprimezero}
g'=0 \text{    on    } H \cap [c,d],
\end{equation}
 \begin{equation}\label{iv}
\text{    (iv) holds with    }[a,b] \text{    replaced by    }[c,d],
\end{equation}
 and
  \begin{equation}\label{flgfu}
 f_l(x) < g(x) <f_u(x)\text{    for all    }x \in [c,d].
 \end{equation}  
\tr{We treat the case where $c,d \in H$.  If either $c$ or $d$ is not in $H$, one can proceed similarly: this argument is left to the reader.}
We note that
if $|E \cap (c,d)| = 0$, then it follows from inequality (\ref{1-epsilonphi})  that $f$ is constant on $[c,d]$ and we simply define $g=f$ on $[c,d]$.  Thus we may as well assume that $|E \cap (c,d)|>0$.  We also assume without loss of generality that $f(d)\ge f(c)$.  Next we choose finitely many intervals $I_i=(c_{2i-1},c_{2i})$, $i=1,2,\ldots,n$ which are contiguous to $H\cap [c,d]$ and such that $c\le c_1< c_2 \le c_3 < c_4 \le \ldots \le c_{2n-1}<c_{2n}\le d$ and
 \begin{equation}\label{1-delta1-epsilon}
(1-\delta)|E \cap(\bigcup_{i=1}^n[c_{2i-1},c_{2i}])|>(1- \epsilon)|E\cap[c,d]|\ge f(d)-f(c).
\end{equation}
Furthermore, we choose $\gamma$ such that
$$\gamma|E\cap \bigcup_{i=1}^n [c_{2i-1},c_{2i}]|=f(d)-f(c),$$
and note that $\gamma < 1-\delta$.  Using this fact, on each interval $[c_{2i-1},c_{2i}]$ one can define a monotone function $g_i$ so that
$$g_i'(c_{2i-1})=g_i'(c_{2i})=0$$
$$g_i(c_{2i-1})=0 \text{    and    }g_i(c_{2i})=\gamma |E \cap[c_{2i-1},c_{2i}]|,$$
and
$$|g_i(x)-g_i(y)|\le (1-\delta)|\phi(x)-\phi(y)| \text{    for all    }x,y \in [c_{2i-1},c_{2i}].$$
We also extend $g_i$ to the entire interval $[c,d]$ by defining $g_i=0$ on $[c,c_{2i-1}]$ and
$g_i=g_i(c_{2i})$ on $[c_{2i},d]$.  Finally, we define $g=f(c)+\sum_{i=1}^n g_i$ on $[c,d]$.  Then using (\ref{flcdfu}) it is straightforward to verify that (\ref{gequalsf})-(\ref{flgfu}) hold and we are done with the proof.
\end{proof}

 \begin{proof}[Proof of Theorem \ref{*thUDTLip1}]
\tr{Throughout this proof the value of the constant $K$ will depend on the interval with which  it is  associated.}
Fix sequences $\gamma_n$ and $\delta_n$ witnessing the UDT property of $E$.
Let $E= \bigcap_{n=1}^{\infty}G_n$, where each set $G_n$ is open and $G_{n+1}\subset G_n$ for all $n \in  {\mathbb {N}}$.  We also assume, as we may, that
each component of $G_n$ intersects $E$.
We also denote the complement of $G_n$ by $F_n$.
Thus $(F_n)_{n=1}^{\infty}$ is an increasing sequence of closed sets.
Let $\phi(x)=\int_{0}^{x}\mathbf{1}_E(t)dt$ be the integral function of the characteristic function of $E$.
We will construct a sequence of functions $(f_n)_{n=1}^{\infty}$ together with a sequence of vicinities $(U_n)_{n=1}^\infty$ with the following properties:
(Recall that the vicinity was defined in Definition \ref{*defvic}.)
 \begin{enumerate}[(i)]
\item $f_n$ is differentiable on $F_n$ and its derivative vanishes there.
\item For any $m\geq{n}$ we have $f_m\restriction _{F_n} = f_n\restriction _{F_n}$.
\item For any $x,y\in\mathbb{R}$ we have $|f_n(x)-f_n(y)|\leq(1-2^{-3n})|\phi(x)-\phi(y)|$.
\item If $(a,b)$ is an  interval contiguous to $F_n$, then for any $x\in E^{\gamma_n,\delta_n}\cap(a,b)$ there exists $y_n(x)\in(a,b)$ such that $|x-y_n(x)|\le\delta_n$ and $|f_n(x)-f_n(y_n(x))|>(1-2^{-2n})\gamma_n|x-y_n(x)|$.  Moreover, $y_n(x)$ may be chosen so that $|y_n(x)-x|$ is bounded away from 0 on each compact subset of $(a,b)$.
    Additionally, $f_n$ is locally monotone on $(a,b)$ and on each interval of monotonicity we have  $f_n=K\pm (1-2^{-3n})\phi$.
\item Each $U_n$ has a continuous radius $r_n$ satisfying $r_n(x)\le \min\{2^{-n},d(x,F_n)^2\}$ for all $x \in  {\mathbb R}$ and $r_n(x)>0$ for all $x \in G_n$ and $r_n(x),  r_n(y_n(x)) < 2^{-2n}\gamma_n|x-y_n(x)|$ for all $x \in E^{\gamma_n,\delta_n}$.  Moreover, $f_m \in U_n$ for all $m \ge n$ and $U_{n+1}\subset U_n$ for all $n \in  {\mathbb {N}}$.
\item For any $m\geq{n}$ and $x \in G_n \cap E^{\gamma_n,\delta_n}$ we have $$|f_m(x)-f_m(y_n(x))|>(1-2^{-n})\gamma_n|x-y_n(x)|.$$
\item For any $g \in U_n$ we have $g'=0$ on $F_n$.
\end{enumerate}

Assume for the moment that we have established (i)-(vii).  From (v) it follows that the $f_n$s converge uniformly to some function $f$.  Then, $f \in U_n$ for all $n \in  {\mathbb {N}}$ so by (vii) we may conclude that $f'=0$ on $\bigcup_{n=1}^\infty F_n= {\mathbb R}\backslash E$ and therefore $ \Lip f=0$ on $ {\mathbb R} \backslash E$.
On the other hand, if $x\in E$, we have $x\in E^{\gamma_n,\delta_n}$ for infinitely many choices of $n$ as $E$ has UDT.
Hence by (iv) and (vi) there exists $y_n(x)$ satisfying $|x-y_n(x)|<\delta_n$ and $|f_m(x)-f_m(y_n(x))|>(1-2^{-n})\gamma_n|x-y_n(x)|$ for all $m \ge n$, which yields $|f(x)-f(y_n(x))|\ge(1-2^{-n})\gamma_n|x-y_n(x)|$. As we have $(1-2^{-n})\gamma_n\to 1$, we deduce that $ \Lip f(x)\ge 1$.  On the other hand, by (iii), $ \Lip f \le 1$ everywhere and we have $ \Lip f(x)=1$ for $x\in E$ which concludes the proof.

In the following we construct the $f_n$s and $U_n$s and verify that (i)-(vii) are valid.  We begin by constructing $f_1$ and then define the other functions recursively.

 \begin{figure}[!htb]
 \begin{center}
\includegraphics[trim={0 0 5cm 4cm}, width=0.95 \textwidth, clip]{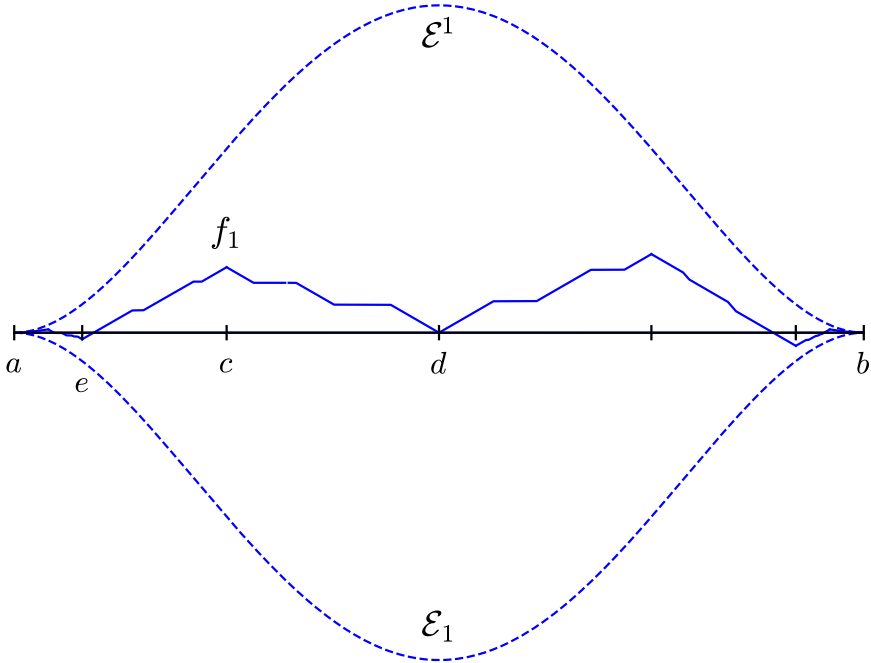}
\vspace*{-1cm}
\caption{definition of $f_{1}$ on $(a,b)$}\label{*fig1}
\end{center}
\end{figure}

To begin we set $f_0=f_0^*\equiv 0$ and we also define $f_1=0$ on $F_1$. Set
$$\mathcal{E}^1(x)=d(x,F_1)^2 \text{    and    }\mathcal{E}_1(x)=-d(x,F_1)^2.$$   Now, consider an interval $(a,b)$
contiguous to $F_1$ see Figure \ref{*fig1}.  We need to ensure that $f_1$ has derivative $0$ at $a$ and $b$.
  Note that $f_0\in (\mathcal{E}_1,\mathcal{E}^1)$ on $[a,b]$.
Now applying Lemma \ref{envelope1} with $f=f_0$, $\delta=2^{-3}$, $ \epsilon=1$ and $(f_l,f_u)=(\mathcal{E}_1,\mathcal{E}^1)$ we can define $f_1$ on $[a,b]$ so that
 \begin{equation}\label{f1 envelope}
f_1 \in (\mathcal{E}_1,\mathcal{E}^1) \text{    on    }[a,b]
\end{equation}
 \begin{equation}\label{loc monotone f1}
f_1 \text{    is locally monotonic on    } (a,b),
\end{equation}
and on any interval of monotonicity $[c,d]$ of $f_1$ we have
 \begin{equation}\label{f1 like phi}
f_1=K\pm(1-2^{-3})\phi.
\end{equation}

Note that by defining $f_1$ in this fashion on each contiguous interval of $F_1$ we ensure that $f_1$ is differentiable on $F_1$ with $f_1'=0$ on $F_1$.  It follows that (i) is satisfied for $n=1$ and it is easy to see that (iii) holds as well.  (\tr{Regarding these properties}, in the upcoming steps of the construction we will require that $f_n\in (\mathcal{E}_1,\mathcal{E}^1)$ on $[a,b]$ as well.
{This} will make sure that the limit function $f$ is differentiable on $F_1$ and its derivative vanishes there as desired.)
 \begin{figure}[!htb]
 \begin{center}
\includegraphics[trim={0 0 5cm 4cm}, width=0.95 \textwidth, clip]{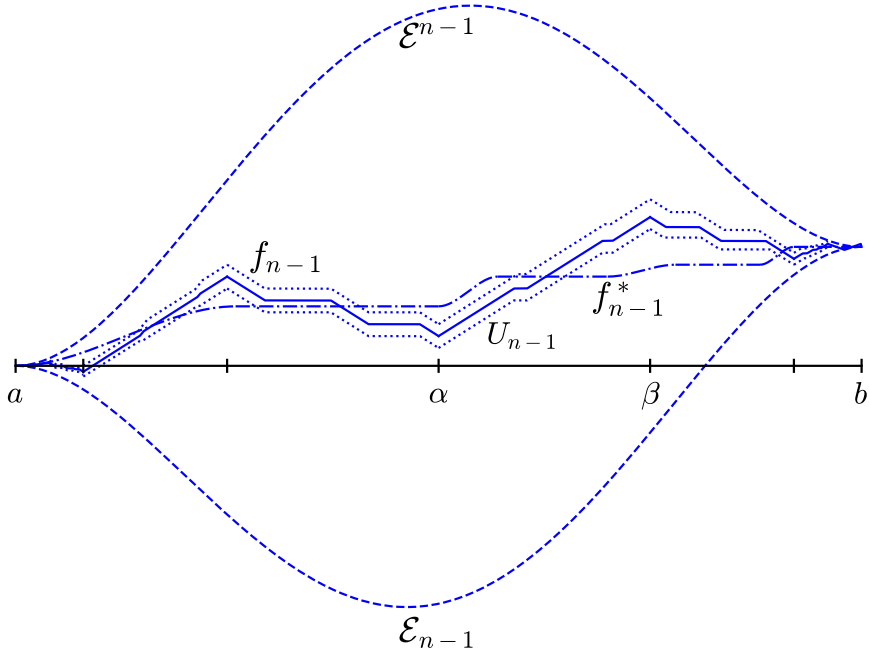}
\vspace*{-1cm}
\caption{$f_{n-1}$ on $(a,b)$}\label{*fig2}
\end{center}
\end{figure}

We next demonstrate that (iv) holds.  We assume without loss of generality that $\gamma_1 \ge \frac12$ and let $ x \in E^{\gamma_1,\delta_1} \cap (a,b)$.   
Choose a maximal interval of monotonicity $[c',d]$ of $f_1$ containing $x$.  We can assume without loss of generality that $x$ is in the left half of $[c',d]$ so that $ x \in [c',\frac{c'+d}2]$. Since $x\in (a,b)$, it is impossible that $x=a$.
In case of $c'\not=a$ we set $c=c'$. If $c'=a$ then we set $c=c'+\frac{x-c'}{2}=
a+\frac{x-a}{2}$.
In both cases  $ x \in [c,\frac{c+d}2]$. Using \eqref{loc monotone f1}
set $$e=\min\{  e'\in [a,c) : \text{
$f_{1}$ is monotone on $[e',c]$} \}.$$

Choose 
 \begin{equation}\label{*dddef}
\delta=\frac1{101}\min\{c-e,d-c, {\delta}_{1}\}.
\end{equation}
  Suppose first that $x \in [c+\delta,\frac{c+d}2]$.  In this case $f_1$ is monotone on $[x-\delta,x+\delta]$ and using the definition of $f_1$, the fact that $x \in E^{\gamma_1,\delta_1}$ and the fact that $\delta \le \delta_1$, we see that
 $|f_1(x)-f_1(y)|\ge (1-2^{-3})\gamma_1 |x-y|>(1-2^{-2})\gamma_1 |x-y|$ must hold for either $y=x-\delta$ or $y=x+\delta$.

 Now suppose that $x \in [c,c+\delta]$.  Since $x \in E^{\gamma_1,\delta_1}$ and $100\delta \le \delta_1$ we have that
 $\max\{|E\cap [x,x+100\delta]|,|E\cap [x-100\delta,x]|\}\ge100\gamma_1\delta$.  Suppose first that $|E\cap [x,x+100\delta]|\ge 100\gamma_1\delta$ and let $y=x+100\delta$.  In this case, since $[x,y]\subset [c,d]$, by the definition of $f_1$, we obtain $|f_1(x)-f_1(y)|\ge (1-2^{-3})\gamma_1 |x-y|>(1-2^{-2})\gamma_1 |x-y|$.  Now suppose that
 $|E\cap [x-100\delta,x]|\ge 100\gamma_1\delta$. Note that $[x-100\delta,c]\subset [e,c]$.  Setting $y=x-100\delta$, $S_1=\int_y^c(1-2^{-3})\boldmath{1}_E(t)\, dt$ and
 $ S_2=\int_{c}^x (1-2^{-3})\boldmath{1}_E(t)\,dt$, we get $|f_1(x)-f_1(y)|\ge S_1-S_2$. 
 On the other hand, we know that $S_2\le (1-2^{-3}) (x-c) \le (1-2^{-3})\delta$ and $S_1+S_2 =(1-2^{-3})|E\cap [y,x]|\ge  (1-2^{-3})100\gamma_1\delta$.  Using the fact that $\gamma_1\ge \frac12$, we see that $$|f_1(x)-f_1(y)|\ge S_1-S_2 \ge (1-2^{-3})(100\gamma_1-2)\delta > (1-2^{-2})100\gamma_1\delta=(1-2^{-2})\gamma_1|x-y|.$$

 Summing up, we see that in each of the two cases considered: $x \in [c,c+\delta]$ or $ x \in [c+\delta,\frac{c+d}2]$, we can choose
 $y=y_1(x)$ such that $\delta \le |x-y| \le \delta_1$ and $|f_1(x)-f_1(y)|\ge (1-2^{-2})\gamma_1|x-y|$.  Note that the definition of $\delta$ in \eqref{*dddef} ensures that $|x-y_1(x)|$ is bounded away from 0 on each compact subset of $(a,b)$.   This establishes (iv).

 Using the fact that $|x-y_1(x)|$ is bounded away from 0 on each compact subset of $(a,b)$, we see that we can define a continuous, non-negative function $r_1\le\mathcal{E}^1$ so that $r_1=0$ on $F_1$, $r_1>0$ on $G_1$ and
$r_1(x) ,r_1(y_1(x)) < 2^{-2} |x-y_1(x)|$ for all $x \in E^{\gamma_1,\delta_1}\cap G_1$ and $||r_1||_\infty \le 1/2$.  Letting $U_1$ be the vicinity of $f_1$ with radius $r_1$ we see that for any $g \in U_1$ we have $g \in (-\mathcal{E}_{1},\mathcal{E}_{1})$ on any interval $[a,b]$ contiguous to $F_1$.   It follows that (v)-(vii) have been established provided that we assume that at later steps $f_{m}\in U_{m} {\subset} U_{1}$ for $m>1$.

Now assume that we {have} already defined the functions $f_1,f_2,...,f_{n-1}$ and the decreasing sequence of vicinities $U_1,U_2,...,U_{n-1}$ with radii $r_1,r_2,\ldots, r_{n-1}$ for some $n\geq{2}$ so that they have the prescribed properties.
Since $r_{n-1}$ is continuous and positive on $G_{n-1}$, it follows that $r_{n-1}$ is bounded away from $0$ on all compact subsets of $G_{n-1}$.
Now we would like to define $f_n$ and $U_n$.
First we define an auxiliary function $f_n^*$.
Roughly {$f_n^*$} will be defined so that it has the same increment as $f_{n-1}$ in any interval of monotonicity of $f_{n-1}$, but has vanishing derivative on $F_{n}$.

To this end consider an interval $(a,b)$ contiguous to $F_{n-1}$. See Figure \ref{*fig2}.
On this figure the function $f_{n-1}$ is drawn with a continuous line,
the boundaries of the vicinity $U_{n-1}$ are marked with dotted lines, the envelope boundaries $\mathcal{E}_{n-1}$ and  $\mathcal{E}^{n-1}$ used in step $n-1$
are marked with dashed lines, finally the auxiliary function $f_{n-1}^{*}$
used at the previous step
 is marked with a dash-dot line.

By assumption we have
 \begin{equation}\label{fn-1ineq}
|f_{n-1}(x)-f_{n-1}(y)|\le (1-2^{-3(n-1)})|\phi(x)-\phi(y)|\text{    in    }  [a,b]
\end{equation}
and clearly $f_{n-1}\in (f_{n-1}-\frac{r_{n-1}}3,f_{n-1}+\frac{r_{n-1}}3)$ on $[a,b]$.  Let $ {\delta}'$ satisfy $2^{-3n} <  {\delta}' < 2^{-3(n-1)}$.  
Then by Lemma \ref{envelope2} used with $ {\varepsilon}=2^{-3(n-1)}$ and $ {\delta}= {\delta}'$ we can define $f_n^*$ on $[a,b]$ so that
 \begin{equation}
f_n^* \in (f_{n-1}-\frac{r_{n-1}}3,f_{n-1}+\frac{r_{n-1}}3) \text{    on    } [a,b]
\end{equation}
 \begin{equation}
f_n^*(a)=f_{n-1}(a) \text{    and    } f_n^*(b)=f_{n-1}(b)
\end{equation}
 \begin{equation}
({f_n^*})'=0 \text{    on    } F_n \cap (a,b)
\end{equation}
 \begin{equation}\label{fn*ineq}
|f_n^*(x)-f_n^*(y)|\le (1- {\delta}') |\phi(x)-\phi(y)| \text{    for all    } x,y\in [a,b].
\end{equation}

 \begin{figure}[!htb]
 \begin{center}
\includegraphics[trim={0 0 5cm 0.7cm}, width=0.95 \textwidth, clip]{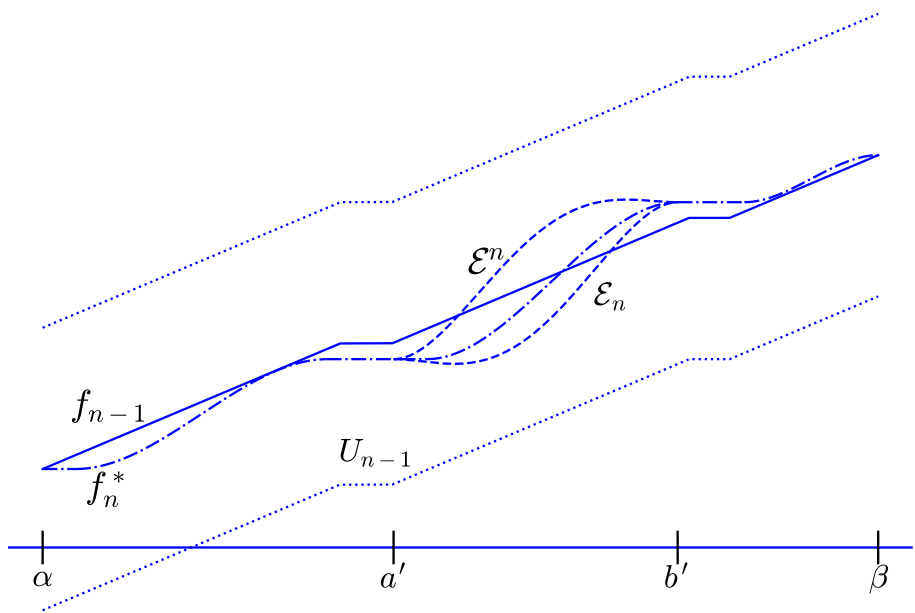}
\vspace*{-1cm}
\caption{$f_{n-1}$, $f_{n}^{*}$ on $( {\alpha}, {\beta})$}\label{*fig3}
\end{center}
\end{figure}

Now define $ \epsilon_n(x)=\min\{d(x,F_n)^2,\frac{r_{n-1}(x)}3\}$ and let $\mathcal{E}_n=f_n^*- \epsilon_n$ and $\mathcal{E}^n=f_n^*+ \epsilon_n$.
Let $(a',b')$ be contiguous to $F_n$ in $(a,b)$ so we have $f_n^* \in (\mathcal{E}_n,\mathcal{E}^n)$ on $[a',b']$. See Figure \ref{*fig3}.
Noting that (\ref{fn*ineq}) holds, we can apply Lemma \ref{envelope1} 
with $ {\varepsilon}= {\delta}'$ and $ {\delta}=2^{-3n}$ to define a function $f_n$ such that on each $[a',b']$ we have that $f_n \in (\mathcal{E}_n,\mathcal{E}^n)$, that $f_n$ is locally monotone and that on each interval of monotonicity we have $f_n=K\pm (1-2^{-3n})\phi$.

From our construction we see that (i)-(iii) hold.
On the other hand, (iv) is verified in a similar way as in the case $n=1$.
Finally, we consider (v)-(vii).    
We can define $r_n \in C[a,b]$ such that $r_n \le \min\{2^{-n}, \epsilon_n\}$ and $r_n >0$ on each interval $(a',b')$ contiguous to $F_{n}$.
Moreover, as $|x-y_n(x)|$ is bounded away from $0$ on each compact subset of each such contiguous interval $(a',b')$,  we can choose $r_n$ such that  if the vicinity $U_n$ has radius $r_n$ we have for any function $g\in U_n$ that $|g(x)-g(y_n(x))|>(1-2^{-n})\gamma_n|x-y_n(x)|$.
Note that $r_n \le  \epsilon_n$ guarantees $U_n\subseteq U_{n-1}$.
Thus,
\tr{for} a sufficiently small $r_n$ the condition $  r_n(x),r_n(y(x)) <   2^{-2n}\gamma_n |x-y_n(x)|$ is satisfied for all $x\in E^{\gamma_{n},\delta_{n}}${and therefore}
 (v) is verified. Moreover (vi) and (vii) follow easily as well.

By our earlier observations {this} concludes the proof: (v) guarantees that the sequence $(f_n)$ has a uniform limit function $f$, for which $ \Lip f(x)=1$ in $E$ by (iv) and (v).
 On the other hand, $ \Lip f(x)=0$ in the complement of $E$ by (ii) and (v), as $f$ has a vanishing derivative there by the choice of the vicinities.
\end{proof}

Note that sets of full measure are trivially UDT sets so an interesting consequence of Theorem \ref{*thUDTLip1} is that $G_\delta$ sets of full measure are $ \Lip 1$, yielding the following surprising corollary:

\tr{\begin{corollary} The set of irrational numbers is $\Lip 1$. That is, in terms of Dini derivatives, there exists a continuous function $f$ with $$\max\{D^{+}f(x), D^{-}f(x), -D_+f(x),-D_-f(x)\}=\mathbf{1}_{\mathbb{R}\setminus\mathbb{Q}}.$$ \end{corollary}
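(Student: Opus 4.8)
The final statement is a corollary of Theorem \ref{*thUDTLip1}, so the plan is to verify that the set of irrational numbers satisfies the two hypotheses of that theorem: being $G_\delta$ and having UDT. Both are easy, and the corollary's second sentence is just the translation of $\Lip f = \mathbf{1}_E$ into the language of Dini derivatives via the identity $\Lip f(x) = \max\{D^{+}f(x), D^{-}f(x), -D_+f(x), -D_-f(x)\}$ recorded in the introduction.

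First I would observe that $\mathbb{R}\setminus\mathbb{Q}$ is $G_\delta$: it equals $\bigcap_{q\in\mathbb{Q}}(\mathbb{R}\setminus\{q\})$, a countable intersection of open sets. Second, I would check the UDT condition. Since $\mathbb{Q}$ is countable it has Lebesgue measure zero, so $|\mathbb{R}\setminus\mathbb{Q}|$ has full measure and for \emph{every} $x\in\mathbb{R}$ and every $r>0$ we have $|(x-r,x)\cap(\mathbb{R}\setminus\mathbb{Q})| = r$ and $|(x,x+r)\cap(\mathbb{R}\setminus\mathbb{Q})| = r$. Hence for any $\gamma\le 1$ and any $\delta>0$ we get $(\mathbb{R}\setminus\mathbb{Q})^{\gamma,\delta} = \mathbb{R}$. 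Picking any sequences $\gamma_n\nearrow 1$ and $\delta_n\searrow 0$, the required inclusion $\mathbb{R}\setminus\mathbb{Q}\subseteq\bigcap_{k=1}^\infty\bigcup_{n=k}^\infty(\mathbb{R}\setminus\mathbb{Q})^{\gamma_n,\delta_n} = \mathbb{R}$ holds trivially, so $\mathbb{R}\setminus\mathbb{Q}$ has UDT. (Indeed this is exactly the remark made just before the corollary that sets of full measure are trivially UDT.)

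With both hypotheses verified, Theorem \ref{*thUDTLip1} produces a continuous $f$ with $\Lip f = \mathbf{1}_{\mathbb{R}\setminus\mathbb{Q}}$. Finally I would invoke the identity $\Lip f(x) = \max\{D^{+}f(x), D^{-}f(x), -D_+f(x), -D_-f(x)\}$ from the introduction to rewrite this as the displayed equation in the corollary, completing the proof.

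There is essentially no obstacle here — the entire content is packaged inside Theorem \ref{*thUDTLip1}, and the only thing to be careful about is making the trivial verification that full-measure sets are UDT fully explicit, together with citing the Dini-derivative reformulation so that the second sentence of the corollary is not left dangling.
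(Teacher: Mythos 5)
Your proposal is correct and matches the paper's own reasoning exactly: the corollary is derived by noting that $\mathbb{R}\setminus\mathbb{Q}$ is $G_\delta$ and of full measure, that full-measure sets are trivially UDT, and then invoking Theorem \ref{*thUDTLip1} together with the Dini-derivative identity for $\Lip f$. Nothing is missing or different.
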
}

\section{A weakly dense $G_\delta$ set which is not $ \Lip$ 1}\label{*secnotlip14}

Recall that in Section \ref{*secLip} we proved that $ \Lip 1$ sets are weakly dense, $G_\delta$ sets (Theorem \ref{Lip1weaklydense}).  In this section (Theorem \ref{notLip1weaklydense}) we show that weakly dense, $G_\delta$ sets need not be $ \Lip 1$.  For the proof of the theorem we will need the following:

 \begin{lemma}\label{E-ben}
Suppose that $E\subset {\ensuremath {\mathbb R}}$,
$f\colon {\ensuremath {\mathbb R}}\rightarrow {\ensuremath {\mathbb R}}$ and $ \Lip f= \mathbf{1}_E$.
 Then for every $x\in E$ and $ {\varepsilon}>0$ there is a $y\in E\cap (x- {\varepsilon},x+ {\varepsilon})$ for which $|f(x)-f(y)|>(1- {\varepsilon})|x-y|$.
\end{lemma}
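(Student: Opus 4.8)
The plan is to argue by contradiction. Suppose $\Lip f = \mathbf{1}_E$ but there exist $x\in E$ and $\varepsilon\in(0,1)$ such that $|f(x)-f(y)|\le(1-\varepsilon)|x-y|$ for all $y\in E\cap(x-\varepsilon,x+\varepsilon)$. We want to deduce that $M_f(x,r)$ stays bounded away from $1$ for all small $r$, contradicting $\Lip f(x)=1$. The key point is that since $\Lip f=\mathbf{1}_E$, Lemma~\ref{novekedes} (applied, say, to $f$ on a neighbourhood of $x$, where indeed $\lip f\le \Lip f=\mathbf{1}_E$) gives the global Lipschitz-type bound $|f(a)-f(b)|\le |[a,b]\cap E|$ for all $a<b$. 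This controls the increment of $f$ using only the measure of $E$ inside the interval, so on portions of $[x-r,x+r]$ lying outside $E$ the function $f$ barely moves.

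First I would fix a small $r\le\varepsilon$ and let $y$ be any point of $[x-r,x+r]$, say $y>x$ (the case $y<x$ is symmetric). I split into two cases. If $y\in E$, then by hypothesis $|f(x)-f(y)|\le(1-\varepsilon)|x-y|\le(1-\varepsilon)r$. If $y\notin E$, then since $E$ is... well, $E$ need not be closed, so instead I would take $y^*$ to be a point of $E\cap[x,y]$ chosen so that $[y^*,y]$ contains little of $E$: concretely, pick $y^*\in E\cap[x,y]$ with $|[y^*,y]\cap E|$ as small as desired, or more robustly use that $|[x,y]\cap E|$ can be split and choose $y^*$ to be (close to) $\sup(E\cap[x,y])$. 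Then $|f(x)-f(y)|\le|f(x)-f(y^*)|+|f(y^*)-f(y)|\le(1-\varepsilon)|x-y^*|+|[y^*,y]\cap E|$ by the hypothesis and Lemma~\ref{novekedes}. Since $|x-y^*|\le|x-y|$ and $|[y^*,y]\cap E|$ can be made arbitrarily small (or is $0$ if $y^*=\sup(E\cap[x,y])$ and that sup is attained — and if not, approximate), this gives $|f(x)-f(y)|\le(1-\varepsilon)|x-y|+\eta$ for arbitrary $\eta>0$, hence $|f(x)-f(y)|\le(1-\varepsilon)|x-y|\le(1-\varepsilon)r$ as well, after letting $\eta\to0$.

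Thus $\sup\{|f(x)-f(y)|:|x-y|\le r\}\le(1-\varepsilon)r$ for every $r\le\varepsilon$, so $M_f(x,r)\le 1-\varepsilon$ for all such $r$, whence $\Lip f(x)=\limsup_{r\to0^+}M_f(x,r)\le 1-\varepsilon<1$. This contradicts $\Lip f(x)=\mathbf{1}_E(x)=1$, and the lemma follows.

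The main obstacle I anticipate is the bookkeeping in the case $y\notin E$: one must be careful that $\sup(E\cap[x,y])$ may not belong to $E$ (as $E$ is merely $G_\delta$), so the cleanest route is to phrase the estimate with an auxiliary $\eta>0$, choosing $y^*\in E\cap[x,y]$ with $|[y^*,y]\cap E|<\eta$ — such a $y^*$ exists because $|[x,y]\cap E|=\lim_{t\nearrow y}|[x,t]\cap E|$ — and then let $\eta\to0$ at the end. Everything else is a routine triangle-inequality argument combined with Lemma~\ref{novekedes}.
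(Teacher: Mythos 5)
Your proof is correct and is in substance the contrapositive of the paper's direct argument: both hinge on Lemma~\ref{novekedes} (the bound $|f(a)-f(b)|\le|E\cap[a,b]|$) to shift a point across a region where $E$ has little measure without changing $f$ appreciably. The paper takes a $y'$ with $|f(x)-f(y')|>(1-\varepsilon/2)|x-y'|$ guaranteed by $\Lip f(x)=1$ and nudges $y'$ into $E$; you instead assume all $y\in E$ near $x$ give quotient $\le 1-\varepsilon$, nudge an arbitrary nearby $y$ onto a point $y^*\in E$, deduce $M_f(x,r)\le 1-\varepsilon$ for all small $r$, and contradict $\Lip f(x)=1$ --- the same estimate read in reverse. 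One small correction: to get $y^*\in E\cap[x,y]$ with $|[y^*,y]\cap E|<\eta$, the continuity of $t\mapsto|[x,t]\cap E|$ at $t=y$ is not enough (since $E$ need not contain points near $y$); instead approximate $s=\sup(E\cap[x,y])$ from below within $E$, noting $|[t,y]\cap E|=|[t,s]\cap E|\to 0$ as $t\nearrow s$, with $t\in E$ available by the definition of supremum.
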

 \begin{proof}
Take $y'\in  {\ensuremath {\mathbb R}}$ such that $|f(x)-f(y')|>\left(1-\frac{{\varepsilon}}{2}\right)|x-y'|$ and $|x-y'|< {\varepsilon}$.
We can assume that $ {\varepsilon}<1$ and $y'<x$.  
There is a $y\in E\cap (y',x)$ for which $|E\cap (y',y)| < \frac{{\varepsilon}}{2}|f(x)-f(y')|$ and Lemma \ref{novekedes} implies that 
 \begin{align*}
\frac{|f(x)-f(y)|}{|x-y|} &\ge \frac{|f(x)-f(y')|-|E\cap (y',y)|}{|x-y|} > \frac{|f(x)-f(y')|-\frac{{\varepsilon}}{2}|f(x)-f(y')|}{|x-y|} \\
&\ge \frac{\left(1-\frac{{\varepsilon}}{2}\right)^2|x-y'|}{|x-y'|} \ge 1- {\varepsilon}.
\end{align*}
\end{proof}

 \begin{remark}\label{*remwd}
Recall Definition \ref{weak dense}. It is easy to see that the following two statements are equivalent:
 \begin{itemize}
\item
 $E$ is weakly dense at $x$,
 \item
$\text{  for every $ {\varepsilon  }>0$ there is an $r\in (0, {\varepsilon})$ such that}$
 \begin{equation}\label{*eqwosd}
\max\Big \{\frac{|E\cap (x-r,x)|}r, \frac{|E\cap (x,x+r)|}r \Big \} > 1- {\varepsilon}.
\end{equation}
\end{itemize}
\end{remark}

 \begin{theorem}\label{notLip1weaklydense}
There exists a weakly dense, $G_\delta$  set $E\subset {\ensuremath {\mathbb R}}$ which is not $ \Lip 1$.
\end{theorem}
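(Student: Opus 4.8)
The plan is to build $E$ as a dense $G_\delta$ set that is weakly dense at each of its points, but which is arranged so that \emph{no} ternary decomposition of $\mathbb R$ with respect to $E$ can exist; by Theorem \ref{Lip1thm} this shows $E$ is not $\Lip 1$. I would make $E$ be the complement of a carefully chosen $F_\sigma$ set $K=\bigcup_m K_m$, where each $K_m$ is a nowhere dense closed set, so that $E$ is automatically dense $G_\delta$. The key design requirement is a metric one: I want $E$ to be weakly dense at every $x\in E$ (so that the necessary condition of Theorem \ref{Lip1weaklydense} is satisfied and the example is genuinely non-trivial), while simultaneously forcing the ``balance'' condition \eqref{E1-E-1} of a ternary decomposition to fail at a positive-measure portion of $E^c$ in a way that is incompatible with the ``weakly dense'' condition \eqref{E1 or E-1} required on $E$.

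Concretely, the mechanism I expect to use is the following tension. By Remark \ref{*remtre}, if $E\sim(E_1,E_0,E_{-1})$ then we may assume $E=E_1\sqcup E_{-1}$ and $E_0=E^c$, so $E^c$ contributes nothing and the whole burden is: for a.e.\ $x\notin E$ and every $I_n\to x$, $\bigl||E_1\cap I_n|-|E_{-1}\cap I_n|\bigr|/|I_n|\to 0$, yet at every $x\in E$ some $I_n\to x$ has $|E_i\cap I_n|/|I_n|\to 1$ for $i=1$ or $i=-1$. I would construct $E$ so that on a large (positive measure, indeed relatively dense) collection of intervals $J$, the set $E$ occupies almost all of $J$, and moreover $E^c\cap J$ is extremely sparse but \emph{located near the right endpoint of $J$} — so that from the left endpoint region one sees long runs of $E$. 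The point is to choose the geometry so that near a typical point $z\in E^c$ the surrounding structure of $E$ forces, via \eqref{E1 or E-1} applied at nearby points of $E$, that $E_1$ (say) must capture almost all of $E$ in a whole cascade of intervals clustering at $z$; then \eqref{E1-E-1} at $z$ fails because $|E_1\cap I_n|$ dominates $|E_{-1}\cap I_n|$ on those intervals. Getting the two requirements to genuinely collide is the heart of the matter.

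The construction I would carry out is an iterated/self-similar one: fix a rapidly increasing sequence of scales, and at stage $m$ remove from each surviving interval a tiny closed nowhere dense piece placed asymmetrically (all near one end), in such a way that (a) the removed pieces $K_m$ are closed and nowhere dense, giving $E=\bigcap_m(\mathbb R\setminus K_m)$ a dense $G_\delta$; (b) at each surviving point $x\in E$ there are arbitrarily small intervals $I\ni x$ with $|E\cap I|/|I|$ as close to $1$ as we like (weak density at $x$) — this is forced by the asymmetry and the fast decay of the removed masses; (c) for a.e.\ point $z\in K=E^c$, every small interval around $z$ is, up to negligible error, a union of ``full'' blocks of $E$ plus the sparse removed set, and the full blocks come in a nested family whose $E$-content cannot be split into two asymptotically equal halves $E_1,E_{-1}$ while still respecting weak density at the endpoints of those blocks. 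Step (c) is where I would spend most of the effort, and I expect the precise book-keeping there — quantifying exactly how weak density at points of $E$ propagates a one-sided majority of $E_1$ over $E_{-1}$ into the intervals shrinking to a point of $E^c$, and checking this for \emph{almost every} $z\in E^c$ rather than just one $z$ — to be the main obstacle. A clean way to organize it is to isolate a single point $z_0$ (or a Cantor set of such points, to get positive measure) at which one can explicitly write down the nested intervals $I_n\to z_0$ and show that $|E\cap I_n|$ is, for large $n$, overwhelmingly concentrated in sub-blocks each of which (by repeated use of \eqref{E1 or E-1} and the fact that weak density forces a definite side) must be assigned almost entirely to one of $E_1,E_{-1}$, and that consecutive blocks are forced to the \emph{same} side, producing an imbalance on $I_n$ bounded below independent of $n$ — contradicting \eqref{E1-E-1}.

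Finally I would assemble the pieces: verify $E$ is $G_\delta$ (intersection of dense open sets), verify weak density of $E$ at every point of $E$ directly from the construction and Remark \ref{*remwd}, and then argue by contradiction: assume $E$ is $\Lip 1$, invoke Theorem \ref{Lip1thm} and Remark \ref{*remtre} to get $E\sim(E_1,E^c,E_{-1})$ with $E=E_1\sqcup E_{-1}$, run the propagation argument of step (c) at the chosen point(s) $z_0\in E^c$ to violate \eqref{E1-E-1}, and conclude that no such decomposition exists, hence $E$ is not $\Lip 1$. The only genuinely delicate quantitative input is balancing the decay rates of the removed masses against the scale ratios so that (b) and (c) hold simultaneously; everything else is routine once the self-similar template is fixed.
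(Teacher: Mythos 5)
There is a genuine gap. Your proposal has the right high-level shape — iterate a nowhere-dense removal to get a $G_\delta$ set $E$, tune the geometry so that $E$ is weakly dense at each of its points, and derive a contradiction at a suitably chosen point of $E^c$ — and this is indeed what the paper does with a Cantor-like family $F_{i_0,\ldots,i_n}$ (each split into a ``left half'' $U_{i_0,\ldots,i_n}\subset E$ and a right half that alternates children with gaps at scale $4^{n+1}$). But the mechanism you propose for producing the contradiction is both unverified and doubtful. You want to route the argument through Theorem \ref{Lip1thm} and Remark \ref{*remtre}, reducing to a decomposition $E=E_1\sqcup E_{-1}$, and then claim that weak density of $E_1$ or $E_{-1}$ at points of $E$ ``forces a definite side'' on each block and that ``consecutive blocks are forced to the same side,'' so that an imbalance accumulates against \eqref{E1-E-1} at a chosen $z_0\in E^c$. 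The first condition \eqref{E1 or E-1} of a ternary decomposition is weak: it says only that \emph{some} sequence of intervals shrinking to $x$ has $E_1$ (or $E_{-1}$) of density tending to $1$ in those intervals. It says nothing about the density of $E_1$ versus $E_{-1}$ in any fixed block of your construction, and nothing that would make the choice of side propagate from one block to the next. So the central ``collision'' you describe — which you yourself flag as ``the heart of the matter'' and ``the main obstacle'' — is not supported by the definitions, and you have supplied neither an explicit construction nor a quantitative verification.

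The paper sidesteps this entirely by not passing through ternary decompositions. It works directly with an assumed $f$ satisfying $\Lip f=\mathbf{1}_E$, using two quantitative facts: Lemma \ref{novekedes}, which gives the increment bound $|f(a)-f(b)|\le |[a,b]\cap E|$, and Lemma \ref{E-ben}, which gives that at every $x\in E$ there is a nearby $y\in E$ with $|f(x)-f(y)|>(1-\varepsilon)|x-y|$. The geometry of the $F_{i_0,\ldots,i_n}$'s is tuned so that, on the one hand, between separated children $F_{j_0,\ldots,j_k,j_{k+1}}$ and $F_{j_0,\ldots,j_k,j'_{k+1}}$ the density of $E$ is at most $2/3$ (forcing slope $\le 2/3$ there by Lemma \ref{novekedes}), while on the other hand Lemma \ref{E-ben} applied at the leftmost point of $F\cap F_{i_0,\ldots,i_{a_{n-1}}}$ forces the witnessing point $x_n$ into the $U$-block; iterating this selects a specific $x^*=\bigcap_n F_{i_0,\ldots,i_n}$ with $0.1<\Lip f(x^*)\le 0.9$, contradicting the $\{0,1\}$-valuedness of $\Lip f$. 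This is a different and sharper argument than the one you sketch: it produces an explicit quantitative sandwich on $\Lip f$ at a single well-chosen point, rather than trying to derive a sign/imbalance property of $E_1,E_{-1}$ from the weak-density hypothesis. To salvage your route you would need a concrete construction together with an argument that replaces ``weak density forces a side'' by something actually implied by Definition \ref{ternary}; as written, the proposal does not reach a proof.
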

 \begin{proof}
We use recursion to define $E$.
Set $F_1:=[0,1]$.
Suppose that $n$ is a non-negative integer and
for some $(i_0,\ldots,i_n)\in \{1\}\times\ldots\times \{1,\ldots,4^n\}$
we have already defined a non-degenerate closed interval $F_{i_0,\ldots,i_n}$.
Let $U_{i_0,\ldots,i_n}$ be the left half of $F_{i_0,\ldots,i_n},$ that is
$$
U_{i_0,\ldots,i_n} := \left[\min F_{i_0,\ldots,i_n}, \frac{\min F_{i_0,\ldots,i_n} + \max{F_{i_0,\ldots,i_n}}}{2}\right].
$$
 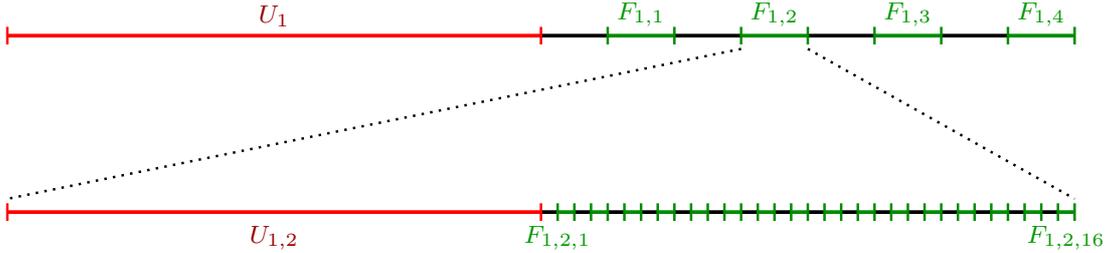
\begin{figure}[h]
\centering{
\resizebox{1.0\textwidth}{!}{


\scriptsize\begin{tikzpicture}[xscale=12]

\newcounter{magassag}
\setcounter{magassag}{2}

\makeatletter

\newcounter{szam}

\@whilenum\value{szam}<8\do{
\draw[-][draw=black, very thick] (.5+.5/8*\value{szam},\themagassag) -- (.5+0.5/8*\value{szam}+0.5/8,\themagassag);
\stepcounter{szam}
\stepcounter{szam}
\ }

\setcounter{szam}{0}
\newcounter{k}
\@whilenum\value{szam}<8\do{
\stepcounter{k}
\draw [draw=green, thick] (.5+.5/8*\value{szam}+0.5/8,\themagassag-0.1) -- (.5+.5/8*\value{szam}+0.5/8,\themagassag+0.1);
\draw (.5+.5/8*\value{szam}+0.5/8*3/2,\themagassag) node[above]{\textcolor[rgb]{0,0.6,0}{$F_{1,\thek}$}};
\stepcounter{szam}
\draw[-][draw=green, very thick] (.5+.5/8*\value{szam},\themagassag) -- (.5+0.5/8*\value{szam}+0.5/8,\themagassag);
\draw [draw=green, thick] (.5+.5/8*\value{szam}+0.5/8,\themagassag-0.1) -- (.5+.5/8*\value{szam}+0.5/8,\themagassag+0.1);
\stepcounter{szam}
\ }
\makeatother

\draw[-][draw=red, very thick] (0,\themagassag) -- (.5,\themagassag);
\draw [draw=red, thick] (0,\themagassag-.1) -- (0,\themagassag+0.1);
\draw [draw=red, thick] (0.5,\themagassag-.1) -- (0.5,\themagassag+0.1);
\draw (1/4,\themagassag) node[above]{\textcolor[rgb]{0.6,0,0}{$U_{1}$}};

\draw [draw=black, dotted, thick] (.5+.5/8*3,1.85) -- (0,0.15);
\draw [draw=black, dotted, thick] (.5+.5/8*4,1.85) -- (1,0.15);

\makeatletter
\setcounter{szam}{0}
\@whilenum\value{szam}<32\do{
\draw[-][draw=black, very thick] (.5+.5/32*\value{szam},0) -- (.5+0.5/32*\value{szam}+0.5/32,0);
\stepcounter{szam}
\stepcounter{szam}
\ }

\setcounter{szam}{0}
\@whilenum\value{szam}<32\do{
\draw [draw=green, thick] (.5+.5/32*\value{szam}+0.5/32,-0.1) -- (.5+.5/32*\value{szam}+0.5/32,0.1);
\stepcounter{szam}
\draw[-][draw=green, very thick] (.5+.5/32*\value{szam},0) -- (.5+0.5/32*\value{szam}+0.5/32,0);
\draw [draw=green, thick] (.5+.5/32*\value{szam}+0.5/32,-0.1) -- (.5+.5/32*\value{szam}+0.5/32,0.1);
\stepcounter{szam}
\ }
\makeatother

\draw (0.5+2/128,-0.05) node[below]{\textcolor[rgb]{0,0.6,0}{$F_{1,2,1}$}};
\draw (1-1/128,-0.05) node[below]{\textcolor[rgb]{0,0.6,0}{$F_{1,2,16}$}};

\draw[-][draw=red, very thick] (0,0) -- (.5,0);
\draw [draw=red, thick] (0,-.1) -- (0,0.1);
\draw [draw=red, thick] (0.5,-.1) -- (0.5,0.1);
\draw (1/4,0.-0.05) node[below]{\textcolor[rgb]{0.6,0,0}{$U_{1,2}$}};

\end{tikzpicture}

\caption{The first two steps of the recursion} \label{fig_def}
\end{figure}
For every $i_{n+1}\in\{1,\ldots,4^{n+1}\}$ let 
 \begin{equation*}
 \begin{split}
F_{i_0,\ldots,i_n,i_{n+1}} := \left[\frac{(2\cdot 4^{n+1}-2i_{n+1}+1)\max U_{i_0,\ldots,i_n} + (2i_{n+1}-1)\max{F_{i_0,\ldots,i_n}}}{2\cdot4^{n+1}}, \right. \\
\left.\frac{(2\cdot4^{n+1}-2i_{n+1})\max U_{i_0,\ldots,i_n} + (2i_{n+1})\max{F_{i_0,\ldots,i_n}}}{2\cdot4^{n+1}} \right].
\end{split}
\end{equation*}
We define $U_{i_0,\dots,i_n}$ and $F_{i_0,\ldots,i_n}$ recursively in this way for every $n\in {\ensuremath {\mathbb N}}$ and $(i_0,\ldots,i_n)\in \{1\}\times\{1,\ldots,4\}\times\ldots\times \{1,\ldots,4^n\}$.
We are now ready to define $E$.  First define $$\mathcal{I}=\{1\}\times \{1,2,3,4\}\times ... \times \{1,2,...,4^n\}\times ...$$ and let $\mathcal{I}_1=\{(i_n)\in \mathcal{I} \,|\, i_n=1 \mbox{ for infinitely many }  n  \in \mathbb{N}\}$.  
Set
\begin{equation}\label{Fdef}
F := \bigcup_{(i_n)\in \mathcal{I}_1 }\bigcap_{n=1}^\infty F_{i_1,i_2,...,i_n}
\end{equation}
and
\begin{equation*}
\begin{split}
U := &\bigcup_{(i_n)\in \mathcal{I}} \bigcup_{n=0}^\infty U_{i_0,i_1,...,i_n}.
\end{split}
\end{equation*}

The set $F$ is a Cantor set minus countably many Cantor sets, hence it is $G_\delta$.
For every $n\in {\ensuremath {\mathbb N}}$ and $(i_0,\ldots,i_n)\in \{1\}\times\ldots\times\{1,\ldots,4^n\}$ there is an open set $U'_{i_0,\ldots,i_n}$ such that $U_{i_0,\ldots,i_n}\subset U'_{i_0,\ldots,i_n} \subset ( {\ensuremath {\mathbb R}}\setminus U)\cup U_{i_0,\ldots,i_n}$.
Thus $U$ is also $G_\delta$. This implies that
$$
E:=U\cup F
$$
is also $G_\delta$.

If $x\in U$ then $E$ is clearly weakly dense at $x$.
If $x\in F$ and $ {\varepsilon}>0$ then
using \eqref{Fdef}
take $n\in {\ensuremath {\mathbb N}}$ and $(i_0,\ldots,i_n)\in \{1\}\times\ldots\times\{1,\ldots,4^n\}$ such that $x\in F_{i_0,\ldots,i_n,1}$ and $ {\varepsilon}>\min\left\{|F_{i_0,\ldots,i_n}|,  4^{-n-1}\right\}$.
By the definition of $F_{i_0,\ldots,i_n,1}$ we have $ 4 \cdot4^{n+1} \left|F_{i_0,\ldots,i_n,1}\right| =\left|F_{i_0,\ldots,i_n}\right| $, hence
 \begin{align}
\frac{\left|\left(\min F_{i_0,\ldots,i_n}, x\right)\cap E\right|}{x-\min F_{i_0,\ldots,i_n}} &\ge \frac{\left|U_{i_0,\ldots,i_n}\right|}{\max F_{i_0,\ldots,i_n,1}-\min F_{i_0,\ldots,i_n}} = \frac{\frac{1}{2}\left|F_{i_0,\ldots,i_n}\right|}{\frac{1}{2}\left|F_{i_0,\ldots,i_n}\right|+2\left|F_{i_0,\ldots,i_n,1}\right|} \nonumber \\
&= \frac{\frac{1}{2}\left|F_{i_0,\ldots,i_n}\right|}{\left(\frac{1}{2}+\frac{2}{4\cdot4^{n+1}}\right) \left|F_{i_0,\ldots,i_n}\right|} = \frac{4^{n+1}}{4^{n+1}+1} \label{wd}\\
&= 1-\frac{1}{4^{n+1}+1} > 1- {\varepsilon}.\nonumber
\end{align}
By $\left(\min F_{i_0,\ldots,i_n}, x\right) \subset (x- {\varepsilon},x)$ and \eqref{wd} we obtain that $E$ is weakly dense at $x$.

\tr{We use proof by contradiction to show that $E$ is not $\Lip 1$.}  Assume the existence of a function $f\colon {\ensuremath {\mathbb R}}\rightarrow {\ensuremath {\mathbb R}}$ such that $ \Lip f = \mathbf{1}_E$.
We will show that there is a point $x^*\in  {\ensuremath {\mathbb R}}$ for which $0.1\le \Lip f(x^*)\le0.9$.
We will define $(i_0,i_1,\ldots)\in{\mathcal{I}}$ recursively such that $\{x^*\}=\bigcap_{n=0}^\infty F_{i_0,\ldots,i_n}$.
Set $a_0:=0$ and $i_0:=1$.
Suppose that $n\in {\ensuremath {\mathbb N}}$ and we have already defined a non-negative integer $a_{n-1}$ and $i_m\in \{1,\ldots,4^m\}$ for every $m\in\{0,\ldots,a_{n-1}\}$.
\tr{Let  $y_{n}=\min \left(F\cap F_{i_0,\ldots, i_{a_{n-1}}}\right)\in E$. Observe that   $\{y_{n}\}=\bigcap_{l=1}^{{\infty}}F_{i_{0},\ldots,i_{a_{n-1}},1^{l}}$, where $1^{l}=\ \ \!\!\!\!\!\underbrace{1,\ldots,1}_{l \text{ times  }}$.}
By Lemma \ref{E-ben} 
used with $x=y_{n}$ and $0< {\varepsilon}<\min \{|F_{i_0,\ldots, i_{a_{n-1}}}|,1/10 \}$
we can find an $x_n\in E$ satisfying $|y_{n}-x_{n}|< {\varepsilon}$ and
 \begin{equation}\label{*eqfxnyn}
|f(x_n)-f(y_n)|>0.9|x_n-y_n|.
\end{equation}
This implies that $x_{n}\in  F_{i_0,\ldots, i_{a_{n-1}}}.$
Since $x_{n}\not=y_{n}$ there exists $a_{n}>a_{n-1}$
 such that $x_{n}\in  F_{i_0,\ldots, i_{a_{n}-1}} {\setminus} F_{i_0,\ldots, i_{a_{n}-1},1}$
 while $y_{n}\in F_{i_0,\ldots, i_{a_{n}-1},1}.$
 The property $x_{n}\in F_{i_0,\ldots, i_{a_{n}-1}}$ defines $i_{m}$ for
 $m\in \{a_{n-1}+1,...,a_{n}-1\}.$ We might be able to find many $x_{n}$s satisfying the
 above property but we select an $x_{n}$ for which $a_{n}$ is minimal among the
 possible choices.
 Then $i_{m}=1$ for every  $m\in\{a_{n-1}+1,...,a_{n}-1\}.$

 If $k\in {\ensuremath {\mathbb N}}$, $(j_0,\ldots,j_k)\in \{1\}\times\ldots\times\{1,\ldots,4^k\}$, $j_{k+1},j'_{k+1}\in \{1,\ldots,4^{k+1}\}$, $j_{k+1} < j'_{k+1}$, $z\in F_{j_0,\ldots,j_k,j_{k+1}}$ and $z'\in F_{j_0,\ldots,j_k,j'_{k+1}}$, then by Lemma \ref{novekedes}  and the elementary fact
 \begin{equation}\label{alap}
0\le a < b \text{   and $0\le c$ implies $\frac{a   }{b} \le \frac{a+c}{b+c}$}
\end{equation}
we obtain
 \begin{equation}\label{jobbra nem}
 \begin{split}
\frac{|f(z)-f(z')|}{|z-z'|} &\le \frac{|E\cap [z,z']|}{|z-z'|} \\
&\le \frac{z-\min F_{j_0,\ldots,j_k,j_{k+1}} + |E\cap [z,z']| + \max F_{j_0,\ldots,j_k,j'_{k+1}}-z'} {\max F_{j_0,\ldots,j_k,j'_{k+1}} -\min F_{j_0,\ldots,j_k,j_{k+1}}}  \\
&\le \frac{(|j'_{k+1}-j_{k+1}|+1)\left|F_{j_0,\ldots,j_k,j_{k+1}}\right|}{(2|j'_{k+1}-j_{k+1}|+1)\left|F_{j_0,\ldots,j_k,j_{k+1}}\right|} \le \frac{2}{3}.
\end{split}
\end{equation}
This 
applied with  $z=y_{n}$, $z'=x_{n}$  and $k=a_{n}-1$
would imply that for $x_{n}\not\in U_{i_0,\ldots,i_{a_n-1}}$
we would have $|f(x_{n})-f(y_{n})|\leq \frac{2}{3}|x_{n}-y_{n}|$, contradicting
\eqref{*eqfxnyn}. Hence
 $x_n\in U_{i_0,\ldots,i_{a_n-1}}$.


For every $x\in U_{i_0,\ldots,i_{a_n-1}}$ and $y\in F_{i_0,\ldots,i_{a_n-1},4^{a_n}}$ again Lemma \ref{novekedes} and \eqref{alap} imply that
 \begin{align*}
\frac{|f(y)-f(x)|}{y-x} &\le \frac{|E\cap [x,y]|}{y-x} \\
&\le \frac{|E\cap [x,y]|+\left(x-\min F_{i_0,\ldots,i_{a_n-1}}\right)+\left(\max F_{i_0,\ldots,i_{a_n-1}}-y\right)}{y-x+\left(x-\min F_{i_0,\ldots,i_{a_n-1}}\right)+\left(\max F_{i_0,\ldots,i_{a_n-1}}-y\right)} \\
&\le \frac{\left|U_{i_0,\ldots,i_{a_n-1}}\right|+\sum \limits_{m=1}^{4^{{a_n}}} \left|F_{i_0,\ldots,i_{a_n-1},m}\right|}{\left|F_{i_0,\ldots,i_{a_n-1}}\right|} = \frac{3}{4}.
\end{align*}
Next we define $i_{a_{n}}$.
We select  an integer $i_{a_n}\in\{1,\ldots,4^{a_n}\}$ (let it be the least one)  such that for every $\widetilde{x}\in U_{i_0,\ldots,i_{a_n-1}}$ and $\widetilde{y}\in F_{i_0,\ldots,i_{a_n}}$ we have
 \begin{equation}\label{kicsi}
\frac{|f(\widetilde{y})-f(\widetilde{x})|}{\widetilde{y}-\widetilde{x}} \le 0.9.
\end{equation}

Since $x_n\in U_{i_0,\ldots,i_{a_n-1}}$ and 
$y_{n}\in E\cap F_{i_0,\ldots,i_{a_n-1},1}$
by \eqref{*eqfxnyn} we have that $i_{a_n}$ is larger than one.

As there are $w\in F_{i_0,\ldots,i_{a_n-1},i_{a_n}-1}$ and $v\in U_{i_0,\ldots,i_{a_n-1}}$ for which
 \begin{equation*}
\frac{|f(w)-f(v)|}{w-v} > 0.9 \text{   and hence   }
\end{equation*}
 \begin{equation}\label{*eqvw}
|[v,w]\setminus E| = w-v-|[v,w]\cap E| \le w-v-|f(w)-f(v)| \le \frac{w-v}{10},
\end{equation}
 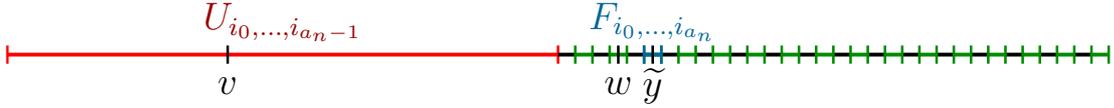
\begin{figure}[h]
\centering{
\resizebox{1.0\textwidth}{!}{\begin{tikzpicture}[xscale=12]

\newcounter{szam2}

\draw[-][draw=red, very thick] (0,0) -- (.5,0);
\draw [draw=red, thick] (0,-.1) -- (0,0.1);
\draw [draw=red, thick] (0.5,-.1) -- (0.5,0.1);
\draw (1/4,0) node[above]{\textcolor[rgb]{0.6,0,0}{$U_{i_0,\ldots,i_{a_n-1}}$}};

\makeatletter
\setcounter{szam2}{0}
\@whilenum\value{szam2}<32\do{
\draw[-][draw=black, very thick] (.5+.5/32*\value{szam2},0) -- (.5+0.5/32*\value{szam2}+0.5/32,0);
\stepcounter{szam2}
\stepcounter{szam2}
\ }

\setcounter{szam2}{0}
\@whilenum\value{szam2}<32\do{
\draw [draw=green, thick] (.5+.5/32*\value{szam2}+0.5/32,-0.1) -- (.5+.5/32*\value{szam2}+0.5/32,0.1);
\stepcounter{szam2}
\draw[-][draw=green, very thick] (.5+.5/32*\value{szam2},0) -- (.5+0.5/32*\value{szam2}+0.5/32,0);
\draw [draw=green, thick] (.5+.5/32*\value{szam2}+0.5/32,-0.1) -- (.5+.5/32*\value{szam2}+0.5/32,0.1);
\stepcounter{szam2}
\ }
\makeatother

\draw [draw=skyblue, very thick] (.5+10/128,0) -- (.5+12/128,0);
\draw [draw=skyblue, thick] (.5+10/128,-0.1) -- (.5+10/128,0.1);
\draw [draw=skyblue, thick] (.5+12/128,-0.1) -- (.5+12/128,0.1);

\draw [thick] (0.5+7/128,-0.1) node[below]{$w$} -- (0.5+7/128,0.1);
\draw [thick] (0.5+11/128,0.1) -- (0.5+11/128,-0.1);
\draw [thick] (0.5+11/128,0) node[above]{\color[rgb]{0,0.4,0.6}{$F_{i_0,\ldots,i_{a_n}}$}} -- (0.5+11/128,0) node[below]{$\widetilde{y}$};

\draw [thick] (0.2,-0.1) node[below]{$v$} -- (0.2,0.1);

\end{tikzpicture}}}
\caption{The position of $v$, $w$ and $\widetilde{y}$} \label{fig_vwy}
\end{figure}
for every $\widetilde{y}\in F_{i_0,\ldots,i_{a_n}}$ we obtain
 \begin{align}
\frac{|f(\widetilde{y})-f(v)|}{\widetilde{y}-v} &\ge \frac{|f(w)-f(v)|-|E\cap[w,\widetilde{y}]|}{\widetilde{y}-w+w-v} \nonumber \\
 &\text{   $\ge \dfrac{|f(w)-f(v)|-|E\cap[w,\widetilde{y   }]|}{\max F_{i_0,\ldots,i_{a_n}}-\min F_{i_0,\ldots,i_{a_n}-1}+w-v}$}  \nonumber \\
&\ge \frac{|f(w)-f(v)|-|E\cap[w,\widetilde{y}]|}{3\left|F_{i_0,\ldots,i_{a_n}}\right|+(w-v)} \nonumber \\
&\label{great}\ge \frac{|f(w)-f(v)|-2\left|F_{i_0,\ldots,i_{a_n}}\right|}{3\left|F_{i_0,\ldots,i_{a_n}}\right|+(w-v)} \ge \frac{0.9(w-v)-2\left|F_{i_0,\ldots,i_{a_n}}\right|}{3\left|F_{i_0,\ldots,i_{a_n}}\right|+(w-v)}  \\
&\ge \frac{0.9(w-v)-2\left|F_{i_0,\ldots,i_{a_n}}\right|}{4(w-v)}
\underset{\text{\eqref{*eqvw}}}{\ge} \frac{0.9\cdot 10|[v,w]\setminus E|-2\left|F_{i_0,\ldots,i_{a_n}}\right|}{4\cdot 10|[v,w]\setminus E|} \nonumber \\
&\text{   $\ge  \frac{0.9\cdot 10\left(\min F_{i_0,\ldots,i_{a_n-1   },1}-\max U_{i_0,\ldots,i_{a_n-1}}\right)-2\left|F_{i_0,\ldots,i_{a_n}}\right|}{4\cdot 10\left(\min F_{i_0,\ldots,i_{a_n-1},1}-\max U_{i_0,\ldots,i_{a_n-1}}\right)}$}   \nonumber \\
&= \frac{0.9\cdot 10\left|F_{i_0,\ldots,i_{a_n}}\right|-2\left|F_{i_0,\ldots,i_{a_n}}\right|}{4\cdot 10\left|F_{i_0,\ldots,i_{a_n}}\right|} = \frac{7}{40} > 0.1 \nonumber.
\end{align}
We define $a_n$ and $i_0,\ldots,i_{a_n}$ recursively for every $n\in {\ensuremath {\mathbb N}}$.

Set $\{ x^*\}:= \bigcap_{n=1}^\infty F_{i_0,\ldots,i_n}$.
From \eqref{great} we have $ \Lip f(x^*) > 0.1$.
We claim that
 \begin{equation}\label{small}
\frac{|f(\widehat{x})-f(x^*)|}{|\widehat{x}-x^*|} \le 0.9
\end{equation}
for every $\widehat{x}\in {\ensuremath {\mathbb R}}\setminus\{x^*\}$.
Suppose that an $\widehat{x}$ does not satisfy \eqref{small}. Since $f$ is continuous and it is constant on every complementary interval of the closure of $E$, we can assume that $\widehat{x}\in E$.
By \eqref{jobbra nem} there is a $k\in {\ensuremath {\mathbb N}}$ such that $\widehat{x}\in U_{i_0,\ldots,i_{k-1}}$ and $x^*\in F_{i_0,\ldots,i_{k-1},i_k}$.
Since $i_{a_n} > 1$ for  every  $n\in {\ensuremath {\mathbb N}}$
we have $x^*\neq \min \left(F\cap F_{i_0,\ldots,i_{k-1}}\right)=\bigcap_{l=1}^{{\infty}}F_{i_{0},\ldots,i_{k-1},1^{l}}$. This implies
 \begin{equation}\label{vege1}
 \begin{gathered}
\frac{|f(\widehat{x})-f(x^*)|}{|\widehat{x}-x^*|} \le \\
\le \max\left\{\frac{\left|f(\widehat{x})-f\left(\min \left(F\cap F_{i_0,\ldots,i_{k-1}}\right)\right)\right|}{\left|\widehat{x}-\min \left(F\cap F_{i_0,\ldots,i_{k-1}}\right)\right|}, \frac{\left|f(x^*)-f\left(\min \left(F\cap F_{i_0,\ldots,i_{k-1}}\right)\right)\right|}{\left|x^*-\min \left(F\cap F_{i_0,\ldots,i_{k-1}}\right)\right|}\right\}.
\end{gathered}
\end{equation}
Since \eqref{kicsi} shows that $k\neq a_n$ for any $n\in {\ensuremath {\mathbb N}}$, from the definition of $(a_n)_{n=0}^\infty$ we obtain
 \begin{equation}\label{vege2}
\frac{\left|f(\widehat{x})-f\left(\min \left(F\cap F_{i_0,\ldots,i_{k-1}}\right)\right)\right|}{\left|\widehat{x}-\min \left(F\cap F_{i_0,\ldots,i_{k-1}}\right)\right|} \le 0.9.
\end{equation}
Moreover \eqref{jobbra nem} implies that
 \begin{equation}\label{vege3}
\frac{\left|f(x^*)-f\left(\min \left(F\cap F_{i_0,\ldots,i_{k-1}}\right)\right)\right|}{\left|x^*-\min \left(F\cap F_{i_0,\ldots,i_{k-1}}\right)\right|} \le \frac{2}{3}.
\end{equation}
Hence by \eqref{vege1}, \eqref{vege2} and \eqref{vege3} we have
$$
\frac{|f(\widehat{x})-f(x^*)|}{|\widehat{x}-x^*|} \le 0.9,
$$
which is impossible.

Thus $ \Lip f(x^*)\neq \mathbf{1}_E(x^*)$, which is a contradiction.
\end{proof}

{\section{Open problems}}

\tr{As mentioned in the introduction, there are a number of problems in this area which are still open.  We list some of these below.}
\bigskip

\begin{itemize}

\item Characterize $\Lip 1$ sets.  This paper and \cite{[BHMVlipap]} provide progress in this direction, but there is still more work to be done.

\item Characterize the sets $E \subset \mathbb{R}$ for which there is a continuous function $f$ such that $\{x \,:\, \lip f(x) < \infty\}=E$.  See \cite{BHRZ} for a partial result on this problem.  The corresponding problem with $\Lip f$ in place of $\lip f$ turns out to be quite straightforward.

\item Characterize the sets $E$ which are sets of non-differentiability for continuous functions $f:\mathbb{R}\to \mathbb{R}$ such that $\lip f <\infty$ everywhere.  See \cite{Hanson2} for partial results in this direction.  

\end{itemize}

We are thankful for the referee's comments, which improved the presentation and readability of this paper.


\end{document}